\renewcommand{\phi}{\varphi}
\newcommand{\perm}{\mathfrak{S}}
\newcommand{\I}{\mathcal{I}}
\newcommand{\step}[1]{\par\medskip\noindent\it#1\rm}
\newcommand{\dcc}{d_{\textup{cc}}}
\DeclareMathOperator{\espo}{e}
\newcommand{\eap}{\espo_{\textup{ap}}}
\newcommand{\expap}{\exp_{\textup{ap}}}
\newcommand{\abs}[1]{\lvert#1\rvert}
\newcommand{\g}{\gamma}
\renewcommand{\H}{\mathcal{H}}
\newcommand{\e}{\varepsilon}
\renewcommand{\r}{\varrho}
\newcommand{\B}{\mathcal{B}}
\newcommand{\s}{\sigma}
\renewcommand{\P}{\mathcal{P}}
\newcommand{\la}{\lambda}
\renewcommand{\cal}[1]{\mathcal{#1}}
\newcommand{\wt}{\widetilde}
\newcommand{\A}{\mathcal{A}}
\renewcommand{\d}{\delta}
\newcommand{\Eucl}{\textup{Euc}}
\newcommand{\p}{\partial}
\DeclareMathOperator{\diag}{diag}
\newtheoremstyle{pluto}  {}{}
{\slshape}  {}{\bfseries}  {.} {1ex}    {}
\newtheorem{theorem}{Theorem}[section]
\newtheorem{proposition}[theorem]{Proposition}
\newtheorem{lemma}[theorem]{Lemma}
\theoremstyle{pluto}
\newtheorem{definition}[theorem]{Definition}
\newtheorem{remark}[theorem]{Remark}
\newtheorem{example}[theorem]{Example}
\newenvironment{enumerate*}{\begin{enumerate}[noitemsep] }{\end{enumerate}}
\newenvironment{itemize*}{\begin{itemize}[noitemsep] }{\end{itemize}}
\newenvironment{description*}{\begin{description}[noitemsep] }{\end{description}}
\newcommand{\xls}{x_{\textup{LS}}}
\newcommand{\R}{\mathbb{R}}
\newcommand{\N}{\mathbb{N}}
\renewcommand{\o}{\omega}
\renewcommand{\d}{\delta}
\renewcommand{\t}{\tau}
\newcommand{\D}{\Delta}
\renewcommand{\a}{\alpha}
\renewcommand{\b}{\beta}
\newcommand{\loc}{\textup{loc}}
\DeclareMathOperator{\Lip}{Lip}
\DeclareMathOperator{\Span}{span}
\DeclareMathOperator{\ad}{ad}
\newcommand{\W}{\mathcal W}
\newcommand{\norm}[1]{\left\Vert#1\right\Vert}
\numberwithin{equation}{section}
\let\oldbibliography\thebibliography
\renewcommand{\thebibliography}[1]{%
  \oldbibliography{#1}%
  \setlength{\itemsep}{0pt}%
}
\titleformat{\section}{%
\normalfont\large\bfseries}{\thesection.}{1em}{}
\titleformat{\subsection}{%
\normalfont\normalsize\bfseries}{\thesubsection.}{1em}{}
\begin{document}

\title{Generalized Jacobi identities and ball-box theorem
\\ for horizontally regular vector fields\thanks{2010 Mathematics Subject Classification:
 53C17.
Key words and Phrases: Jacobi identities, Lie derivatives. Horizontal regularity, Ball-box theorem, Poincar\'e inequality}}
\author{Annamaria Montanari \and Daniele Morbidelli}

\date{}

\maketitle

\begin{abstract}
Consider a family $\H:= \{X_j =: f_j\cdot\nabla: j=1,\dots, m\}$ of $C^1$ vector 
fields in
$\R^n$ and let $s\in\N$. We assume that for all $p\in\{1.\dots, s\}$ and $j_1, \dots, j_p\in \{1, \dots, m\}$
the \emph{horizontal derivatives} $X_{j_1}X_{j_2}\cdots X_{j_{p-1}}f_{j_p}$
exist and are Lipschitz continuous with respect to the control distance defined by~$\H$.
Then we show that different notions of commutator agree.
This involves an accurate analysis of some algebraic identities involving nested commutators which seem to have an independent interest. 

Our principal applications are  a ball-box theorem, the doubling property and the Poincar\'e inequality for H\"ormander vector fields under an intrinsic ``horizontal regularity'' assumption on their coefficients.
\end{abstract}

\tableofcontents

\section{Introduction and main results}
In this paper we study the notion of higher order commutator for a  given 
family $\H= \{X_1, \dots, X_m\}$ 
 of vector fields in $\R^n$.
Our main issue   is to discuss to what extent the notion of higher
order
commutator can be
extended to
vector fields $X_j\in C^1_\Eucl$   whose higher order
derivatives are assumed to have regularity only
along the ``horizontal directions'' provided by the family $\H$.
The main application of such study consists of a 
discussion of a class of    \emph{almost exponential maps}  
 under very low,
 intrinsic regularity assumptions which is carried out in\cite{MontanariMorbidelli11d}. 
This enables   us to 
prove a boll-box theorem,  the doubling property and the Poincar\'e 
inequality for vector fields satisfying the H\"ormander's bracket-generating  condition of step $s\ge 1$ under very low regularity requirements.

To understand the problem, which starts to appear for
commutators of length three,  assume that $X_1,
\dots, X_m$ are vector fields of class $C^1_\Eucl$, i.e.~of class $C^1$ in the Euclidean sense.
Write   $X_i= f_i\cdot\nabla$ for $i=1,\dots,m$.
The definition of commutarors of length two is clear, namely we set
\begin{equation*}
X_{jk}  := (X_j^\sharp f_k - X_k^\sharp f_j)\cdot\nabla  = :
f_{jk}\cdot\nabla
\quad\text{for all $j,k\in\{1,\dots,m\}$}
\end{equation*}
and $X_{jk}\psi: = f_{jk}\cdot\nabla\psi$ for $\psi\in C^1_\Eucl(\R^n)$.
Here we 
denote by  
\(X^\sharp f(x):=\lim_{t\to 0}\frac 1t(f(e^{tX}x)-
f(x))\)
the Lie derivative along a   vector field $X\in C^1_\Eucl$ of a scalar
funciton $f$ (such unusual
 notation will be convenient for our purposes).

Passing to length three,
 we have two alternatives.  For each  $i,j,k$, we can define either
\begin{align}\label{zerosei}
  X_{ijk}:= [X_i, [X_{j}, X_k]]  &:= (X_i^\sharp X_j^\sharp  f_k - X_i^\sharp
X_k^\sharp f_j - X_j^\sharp  X_k^\sharp
f_i   + X_k^\sharp  X_j^\sharp  f_i)\cdot\nabla,
\shortintertext{or} \label{zerotto}
 \ad_{X_i} X_{jk} &:= (X_i^\sharp f_{jk} - X_{jk} f_i)\cdot\nabla.
\end{align}
Both operators act on $C^1_\Eucl$ functions.
The first one is the most natural
and symmetric (for instance one gets the Jacobi identity for free). The second one appears in some
useful non commutative calculus formulae which play a key role in our work,
see Theorem \ref{seminuovo}. It is
rather easy to see that $[X_i, [X_{j}, X_k]]  = \ad_{X_i}X_{jk}$, if the
involved vector
fields are $C^2_\Eucl$,  so
that Euclidean second order derivatives commute (here end hereafter by $C^k_\Eucl$ we
denote Euclidean $C^k$ regularity). In this paper we are able to
show
that
this regularity is not necessary. Indeed, if we denote by $C^{2,1}_{\H,\loc}$
all
functions $f$  which have two horizontal derivatives and such that for
all $i,j\in\{1,\dots,m\}$ the function  $X_i^\sharp
X_j^\sharp f$ is locally Lipschitz with respect to the distance associated to
the vector fields in $\H$, then we have
\begin{theorem}[see Theorem \ref{dass} for a higher order statement]
\label{tregi}  Let $\H = \{X_1, \dots, X_m\}$ be a family of $C^1_\Eucl$ vector
fields. Write $X_j = f_j\cdot\nabla$ and
assume that $f_j\in C^{2,1}_{\H, \loc}$ for all $j\in\{1,\dots, m\}$.
Then we
have
\begin{equation*}
[X_i, [X_{j}, X_k]]  = \ad_{X_i} X_{jk}\quad \text{for all $i,j,k\in \{1,
\dots,
m\}$. }\end{equation*}
\end{theorem}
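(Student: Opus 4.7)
My plan is to reduce the theorem to a pointwise scalar Schwarz-type identity and prove it by matching two Taylor expansions of an auxiliary two-parameter function. Unpacking the definitions, both $[X_i,[X_j,X_k]]\cdot\nabla$ and $\ad_{X_i}X_{jk}\cdot\nabla$ share the contribution $X_i^\sharp X_j^\sharp f_k-X_i^\sharp X_k^\sharp f_j$: the linearity of $X_i^\sharp$ on this sum is justified since each term individually admits a Lie derivative along $X_i$ by the $C^{2,1}_{\H,\loc}$ hypothesis. The asserted equality therefore reduces to the pointwise identity
\[
X_j^\sharp(X_k^\sharp f_i)(x)-X_k^\sharp(X_j^\sharp f_i)(x) = X_{jk}f_i(x) = f_{jk}(x)\cdot\nabla f_i(x),
\]
applied component-wise to $f_i$; this is the only non-trivial content.

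To establish this identity at $x$, I would introduce
\[
\Phi(s,t):=f_i\bigl(\espo^{sX_j}\espo^{tX_k}x\bigr)-f_i\bigl(\espo^{tX_k}\espo^{sX_j}x\bigr)
\]
and expand it as $(s,t)\to(0,0)$ in two complementary ways. On the \emph{geometric side}, since the $X_j$ are $C^1_\Eucl$ every flow admits the expansion $\espo^{tX}x=x+tf(x)+\tfrac{t^2}{2}f\cdot\nabla f(x)+o(t^2)$; composing the two flows and subtracting cancels all non-mixed quadratic terms, leaving $\espo^{sX_j}\espo^{tX_k}x-\espo^{tX_k}\espo^{sX_j}x=-st\,f_{jk}(x)+o(s^2+t^2)$, and a $C^1_\Eucl$ Taylor expansion of $f_i$ at $x$ yields $\Phi(s,t)=-st\,f_{jk}(x)\cdot\nabla f_i(x)+o(s^2+t^2)$. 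On the \emph{analytic side}, the $C^{2,1}_{\H,\loc}$ hypothesis makes the iterated Lie derivatives $X_a^\sharp X_b^\sharp f_i$ exist and be $\H$-Lipschitz near $x$, which delivers the second-order Taylor formula $f_i(\espo^{sX_j}y)=f_i(y)+sX_j^\sharp f_i(y)+\tfrac{s^2}{2}X_j^\sharp X_j^\sharp f_i(y)+o(s^2)$ uniformly for $y$ in a small neighborhood, together with the first-order expansion $X_j^\sharp f_i(\espo^{tX_k}x)=X_j^\sharp f_i(x)+tX_k^\sharp X_j^\sharp f_i(x)+o(t)$ and its symmetric counterpart. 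Substituting these into $\Phi$ gives $\Phi(s,t)=st\,(X_k^\sharp X_j^\sharp f_i-X_j^\sharp X_k^\sharp f_i)(x)+o(s^2+t^2)$. Matching the two expansions along the diagonal $s=t$ and dividing by $t^2$ as $t\to 0$ yields the desired equality.

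The main technical obstacle is the uniformity of the $o$-remainders on the analytic side. One needs the second-order expansion of $f_i$ along the flow of $X_j$ to hold uniformly as the base point $y$ varies along the horizontal segment $\espo^{tX_k}x$, and the first-order expansion of $X_j^\sharp f_i$ along the flow of $X_k$ must reproduce the correct $t$-linear coefficient $X_k^\sharp X_j^\sharp f_i(x)$ with a true $o(t)$ error. The $\H$-Lipschitz regularity of the iterated $\sharp$-derivatives supplied by the $C^{2,1}_{\H,\loc}$ assumption is tailored precisely to furnish these uniform moduli of continuity along horizontal curves; without it the two expansions could not be matched at the crucial order $st$, and the Schwarz-type identity would fail in general.
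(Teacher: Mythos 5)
Your reduction is exactly right: since $X_i^\sharp$ acts linearly on $f_{jk}=X_j^\sharp f_k - X_k^\sharp f_j$, the asserted identity $[X_i,[X_j,X_k]]=\ad_{X_i}X_{jk}$ collapses to the scalar Schwarz-type identity $X_j^\sharp X_k^\sharp f_i - X_k^\sharp X_j^\sharp f_i = f_{jk}\cdot\nabla f_i$, i.e.\ $X_{jk}^\sharp f_i = X_{jk}f_i$, which is precisely the base case $\ell=2$ of the paper's Theorem~\ref{dass}(i). Your proof of this identity is correct, but the route is genuinely different from the paper's. The paper restricts from the start to equal time increments, sets $g(t)=\sum_\sigma\pi_2(\sigma)\psi(\Delta^{\sigma_2}\Delta^{\sigma_1}x)$, identifies $\lim_{t\to0}g(t)/t^2$ with $X_{jk}^\sharp\psi(x)$ through the horizontal Taylor expansion (Lemma~\ref{lemmauno}), and then recomputes the same limit by de~l'H\^opital combined with the noncommutative flow formula of Theorem~\ref{seminuovo} and a cancellation argument to land on $X_{jk}\psi(x)$. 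Your analytic-side expansion is essentially Lemma~\ref{lemmauno} in disguise (and your uniformity discussion via the $d$-Lipschitz bound on $X_a^\sharp X_b^\sharp f_i$ is the right justification), but on the other side you bypass de~l'H\^opital and Theorem~\ref{seminuovo} entirely, replacing them by the elementary Euclidean second-order expansion of the flow commutator $e^{sX_j}e^{tX_k}x - e^{tX_k}e^{sX_j}x = -st\,f_{jk}(x)+o(s^2+t^2)$, which needs only $f_j,f_k\in C^1_\Eucl$, followed by a first-order Euclidean Taylor expansion of $f_i$ along the $O(s^2+t^2)$-small displacement. This is shorter and more self-contained for the length-three statement. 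The trade-off is that it does not scale: for commutators of length $\ell+1>3$ the analogous Euclidean expansion of the flow commutator would require the flows (hence the $f_j$) to be $C^{\ell-1}_\Eucl$, which the standing hypothesis $C^1_\Eucl\cap C^{s-1,1}_{\H,\loc}$ does not provide, whereas the paper's de~l'H\^opital-plus-$\ad$-formula machinery is exactly what lets the induction on $\ell$ in Theorem~\ref{dass} run up to $\ell=s-1$ within that weak Euclidean regularity.
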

In Theorem \ref{dass} we  give the general version of this statement which involves commutators of 
arbitrary length $s\ge 1$, where the vector fields  $f_j$ belong to the class $ C^1_\Eucl\cap C^{s-1,1}_{\H,\loc}$ introduced in Definition \ref{montgomery}.

Although the analysis of statements like Theorem \ref{tregi}---together with the techniques we develop in the proof---may have some independent interest, the strong motivation why we need to analyze operators like 
$\ad_{X_i}X_{jk}$ comes from their natural appearance in the noncommutative formulas of Theorem 
\ref{seminuovo} and ultimately in the theory of differentiation of the almost exponential maps
$E$ introduced below. 
Let us mention that in \cite{MontanariMorbidelli11d} we prove a higher order   orbit theorem for    families $\H$ in such 
class; \cite[Example~3.14]{MontanariMorbidelli11d} shows that our regularity classes capture examples which do not fall in the classical framework.

In order to show that  \eqref{zerosei} and
\eqref{zerotto} agree,
we need to analyze carefully the algebraic properties of the coefficients
appearing in the expansion of nested commutators as sums of higher order
derivatives.
In particular, we exploit some higher order algebraic identities which we denote
as
 ``generalized
Jacobi identities''; see  Proposition \ref{genero} and see also 
Proposition \ref{postace}.
It is  
interesting to observe that some of those  identities, specialized to particular
situations, give the proof of some old nested commutators identities going back
to Baker and discussed   in    \cite{Oteo}.
This is
discussed in
Subsection \ref{chetbaker}. We believe that these algebraic features
may have some independent interest.

From an historical point of view, let us mention that for commutators of
length two,  notions of nonsmooth Lie brackets have been studied deeply by
Rampazzo and Sussmann \cite{RamSus}. The notion of set-valued
commutator studied in
\cite{RamSus} concerns vector fields which are quite less regular than ours,
actually  Lipschitz continuous only, but this approach does not provide
a quantitative knowledge of   control balls or Poincar\'e inequalities. Moreover,
the notion of set-valued commutator is not clear,  if  the length exceeds two; see  
the counterexample in \cite[Section 7.2]{RamSus}. We work here at a
slightly more comfortable level of
regularity, which ensures that commutators are pointwise defined and
``horizontally'' Lipschitz continuous, which will be sufficient to obtain some
good information on control balls.

Next we discuss our applications to sub-Riemannian geometry.
 Let  $\H=\{X_1, \dots, X_m\}$ 
 be a family of   vector fields and assume that $X_j\in C^1_\Eucl\cap C^{s-1, 1}_{\H, \loc}$
for some $s\in\N$. Denote by $B_\textup{cc}(x,r)$ the Carnot--Carath\'eodory ball with center at $x$ and radius $r$.  Let  $\P:= \{Y_1, \dots, Y_q\}$ be the family of all nested commutators of length at most $s$.
Let $\text{length}(Y_j)=:\ell_j
\le  s$.
 Assume that 
$\H$ satisfies the H\"ormander  condition of step $s$, i.e.~$\dim\Span\{Y_j(x)\}=n$, for all $x\in\R^n$.
To identify a family of $ n$ commutators, let us choose  a multiindex $I=
(i_1, \dots, i_n)\in \{1, \dots, q\}^n$.
Given a radius  $r>0$, define  the scaled    commutators $\wt Y_{i_k}:=
r^{\ell_{i_k}}Y_{i_k}$ and the
\emph{almost  exponential map}
\begin{equation}\label{expoenne}
 E_{I,x,r}(h):= \expap(h_1  \wt Y_{i_1})\cdots  \expap(h_p
 \wt Y_{i_n})x
\end{equation}
for each $h$ close to $0\in \R^n$ (after passing to $\wt Y_{i_j}$, the variable
$h$ lives at a unit scale). See~\eqref{hhh} for the definition of    the \emph{approximate exponential} $\expap$.  
Below,   $B_\r$ denotes the control
ball defined by
\emph{all} commutators (with their degrees, see \eqref{coscos}), which trivially
contains the Carnot--Carath\'eodory  ball $B_{\textup{cc}} $ with same center
and radius 
defined  in \eqref{dicacco}.
Then we have the following  ball-box theorem and Poincar\'e inequality.
A more detailed statement is contained in Section~\ref{cinquemila}.

\begin{theorem}\label{pocaro}
Let $\H$ be a family of vector fields 
 in the class $C^1_\Eucl\cap C^{s-1, 1}_{\H, \loc}$ for some~$s$. Assume the H\"ormander 
condition of step $s$ and assume that $Y_j\in C^0_\Eucl$
for all $Y_j\in\cal{P}$. Let $\Omega\subset\R^n$ be a
bounded set. Then there is $C>1$ such that the following holds.  Let $x\in
\Omega$ and take a  positive radius $r<C^{-1}$.
Then there is   a subfamily 
$\{Y_{i_1}, \dots, Y_{i_n}\}\subset\P$ such that the map $E:= E_{I,x,r}$ in \eqref{expoenne}
is $C^1$ in the Euclidean sense  on the  unit ball
$B_\Eucl(1)\subset\R^n$. Its Jacobian satisfies the estimate $C^{-1}\abs{\det dE(0)}\le
\abs{\det dE(h)}\le C\abs{\det dE(0)}$ and we have the ball-box inclusion
\begin{align}
\label{quickbuild}
 E(B_\Eucl(1))& \supseteq B_{\r}(x, C^{-1}r).
\end{align}
 The map $E$ is one-to-one
 on $B_\Eucl( 1)$ and
we have
\begin{equation}\label{cicanto} 
      \abs{B_\textup{cc}(x, 2r)}\le C\abs{B_\textup{cc}(x,r)} \quad\text{for all $x\in\Omega$ 
$r<C^{-1}$.}
\end{equation}
Moreover,   for any $C^1$ function $f$ we have the Poincar\'e inequality
\begin{equation}\label{1122} 
 \int_{B_{\textup{cc}}(x,r)}\abs{f(y) - f_{B_{\textup{cc}}(x,r)}} d y\le
C\sum_{j=1}^m\int_{B_{\textup{cc}}(x, Cr)} \abs{ rX_jf(y)} d y.
\end{equation}
\end{theorem}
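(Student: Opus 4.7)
The plan is to run the classical Nagel--Stein--Wainger scheme for the ball-box theorem, with the smooth noncommutative calculus replaced throughout by the horizontally regular calculus supplied by Theorem \ref{dass} and Theorem \ref{seminuovo}; the four claims will then unfold in the usual order, the Jacobian bound being the heart of the matter. First I would select, for each $x\in\Omega$ and $r\in(0,C^{-1})$, the multiindex $I=(i_1,\dots,i_n)$ that maximizes
\[
\lambda(I,x,r):=\abs{\det\bigl(\wt Y_{i_1}(x),\dots,\wt Y_{i_n}(x)\bigr)}=r^{\ell_{i_1}+\cdots+\ell_{i_n}}\abs{\det\bigl(Y_{i_1}(x),\dots,Y_{i_n}(x)\bigr)}.
\]
The H\"ormander condition ensures $\lambda>0$ for some $I$, with a uniform positive lower bound on $\Omega$ by compactness and by the hypothesis $Y_j\in C^0_\Eucl$. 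The weights $\ell_{i_1}+\cdots+\ell_{i_n}$ are exactly those built into the distance $\r$, so the natural candidate for both $\abs{E(B_\Eucl(1))}$ and $\abs{B_\r(x,r)}$ is the same quantity $\lambda(I,x,r)$.

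Second, I would differentiate $E=E_{I,x,r}$. At $h=0$ the definition of $\expap$ gives immediately $\partial_{h_j}E(0)=\wt Y_{i_j}(x)$ and hence $\abs{\det dE(0)}=\lambda(I,x,r)$. For general $h$, computing $\partial_{h_j}E(h)$ requires pulling an integral curve of $\wt Y_{i_j}$ through the subsequent factors $\expap(h_k\wt Y_{i_k})$; this is where the noncommutative formulas of Theorem \ref{seminuovo} enter, as they expand each such pull-back as $\wt Y_{i_j}$ plus a sum of $\ad_X\cdots\ad_X(Y_{i_j})$ operators whose $r$-weight is strictly greater than $r^{\ell_{i_j}}$. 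Theorem \ref{dass} (and in particular the identification between \eqref{zerosei} and \eqref{zerotto} together with its higher-length analogue) guarantees that these nested adjoints coincide with genuine iterated commutators in $\P$, and so may be reexpanded in the rescaled basis $\{\wt Y_{i_k}(E(h))\}$. The maximality of $\lambda(I,x,r)$ then forces the coefficients of this reexpansion to be $O(r^\delta)$ for some $\delta>0$, whence $dE(h)=dE(0)\cdot(\mathrm{Id}+O(r^\delta))$. Both the $C^1$ regularity of $E$ on $B_\Eucl(1)$ and the two-sided Jacobian estimate follow for $r$ small enough.

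Third, with a uniformly invertible differential and a continuity modulus in hand, a quantitative inverse function / topological degree argument applied to $h\mapsto dE(0)^{-1}E(h)$ yields both the injectivity of $E$ on $B_\Eucl(1)$ and the inclusion $E(B_\Eucl(1))\supseteq B_\r(x,C^{-1}r)$. The doubling \eqref{cicanto} then follows by comparing $\abs{B_\textup{cc}(x,r)}$ and $\abs{B_\textup{cc}(x,2r)}$ with $\lambda(I,x,r)$ and $\lambda(I',x,2r)$, using that the weights $\ell_{i_k}$ are bounded by $s$ so that doubling $r$ changes $\lambda$ by at most a factor $2^{sn}$, while the two maximizing choices $I,I'$ are comparable by the maximality property. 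Finally, \eqref{1122} follows by Jerison's chain argument from doubling together with a one-dimensional segment Poincar\'e inequality along each curve $t\mapsto\expap(t\wt Y_j)x$, which in turn is the fundamental theorem of calculus combined with the identity $\tfrac{d}{dt}f(\expap(t\wt Y_j)x)=rX_jf+o(1)$, justified under the weak regularity assumption once again by Theorem \ref{dass}.

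The main obstacle, unsurprisingly, is the second step: making rigorous the noncommutative expansion of $dE(h)$ in the basis $\{\wt Y_{i_k}(E(h))\}$ with $O(r^\delta)$ remainder, under only the $C^1_\Eucl\cap C^{s-1,1}_{\H,\loc}$ regularity of the generators. This is precisely where the generalized Jacobi identities of Proposition \ref{genero} and the agreement of the two notions of nested commutator provided by Theorem \ref{dass} become indispensable, since without them the ``commutators'' appearing as coefficients in the expansion---which arise naturally as $\ad_X Y$---could not be identified with the elements of $\P$ that span the tangent space by the H\"ormander assumption.
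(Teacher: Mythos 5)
Your outline is the right one in spirit---it is the Nagel--Stein--Wainger scheme, and you correctly identify Theorems \ref{dass}, \ref{seminuovo} and Proposition \ref{genero} as the ingredients that make the noncommutative expansion of $\partial_{h_j}E(h)$ legitimate at the low regularity $C^1_\Eucl\cap C^{s-1,1}_{\H,\loc}$. This is exactly what the paper does; its proof of Theorems \ref{teoremahormander} and \ref{innere} simply collects references to \cite{MontanariMorbidelli11d}, \cite{MontanariMorbidelli11c}, \cite{MM}, \cite{NagelSteinWainger} and \cite{Morbidelli98} for the individual steps. However, two of your intermediate claims would not survive as written.

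First, the smallness of the off-diagonal coefficients in the expansion of $dE(h)$ does not come from an $O(r^\delta)$ bound: after scaling by $\wt Y_j=r^{\ell_j}Y_j$ the variable $h$ lives at unit scale, and the bound the maximality actually delivers is $|\chi_j^k(h)|\le C_\eta\|h\|_I$, as in \eqref{bonfo}--\eqref{dichi}. In other words the coefficients become small only when $\|h\|_I$ is taken small; they do not tend to zero as $r\to0$ with $h$ fixed in $B_\Eucl(1)$. The Jacobian estimate thus holds on a box $Q_I(C^{-1})$ with a fixed (possibly large) $C$, uniformly in $r$, but the deviation from $dE(0)$ is controlled only locally in $h$.

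Second---and this is the real gap---the passage from the expansion of $dE$ to injectivity of $E$ on the whole set $Q_I(C^{-1})$ cannot be handled by a quantitative inverse-function or degree argument alone. The IFT step gives $E$ a local inverse, and the precise a priori injectivity estimate available (item \ref{jojojo} in the paper's proof, following \cite[Lemma~5.14]{MM}) only works on the much smaller box $Q_I(C^{-1}r^{\ell(I)}|\lambda_I(\xi)|)$, whose size degenerates with $r$ and with the determinant. Upgrading this to injectivity on $Q_I(C^{-1})$ uniformly in $(I,x,r)$ requires the NSW covering/connectedness argument (\cite[pp.~132--133]{NagelSteinWainger}, \cite[Lemma~3.6]{Morbidelli98}), and that argument in turn hinges on the \emph{stability of $\eta$-maximality under perturbation of the base point} (\cite[Remark~3.3]{MontanariMorbidelli11d}, item \ref{uutt} in the paper's proof). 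You work with the exact maximizing $I$, which does not have this stability property; the paper works with $\eta$-maximal triples precisely to make the covering argument go through. Without this device your plan establishes a local diffeomorphism, not the global one-to-one claim in the theorem.
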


It is well known that the doubling estimate and the  Poincar\'e inequality 
are important tools in subelliptic PDEs,  sub-Riemanninan geometry and analysis in metric spaces; see 
\cite{FranchiLanconelli83,NagelSteinWainger,Jerison,SaloffCoste92,GarofaloNhieu96,Cheeger
,HajlaszKoskela}. Note that inequality \eqref{1122} improves all previous  versions of the Poincar\'e inequality
from a regularity standpoint; compare
 \cite{Jerison,LM,BramantiBrandoliniPedroni08,MM,Manfredini}.  
Indeed, in such papers some higher
 order Euclidean regularity were assumed, whereas our higher order derivatives
$X_{j_1}^\sharp\cdots X_{j_{p-1}}^\sharp f_{j_p}$ with $2\le p\le s$ are assumed to be horizontally Lipschitz 
continuous only. 
  \footnote{Technically speaking, 
 both   the approaches adopted in  in  \cite{BramantiBrandoliniPedroni08} 
and \cite{MM}---via Euclidean Taylor approximation or   Euclidean regularization---do not work in our situation, because one cannot prove that higher order commutators of mollified vector fields converge to mollified of the corresponding   commutators.}

Let us mention that in  \cite{MontanariMorbidelli11d} and \cite{MontanariMorbidelli11c}, relying on the results obtained here, 
we also  prove an integrability result for orbits, a ball-box theorem and the Poincar\'e inequality  in
a setting where the H\"ormander's condition  is removed.

Our work in low regularity is also motivated by the appearance, in several recent papers,  of 
subelliptic PDEs involving nonlinear first order operators. 
This happens for instance in several complex variables, while studying graphs with prescribed Levi curvature in $\mathbb{C}^n$, 
see \cite{CittiLanconelliMontanari02},  or in the study of intrinsic regular hypersurfaces  in Carnot groups, see \cite{AmbrosioSerraCassanoVittone06}, where vector fields with non Euclidean regularity naturally appear.
 These papers suggest that it
 would be
  desirable to   remove even our assumption $X_j\in C^1_\Eucl$ for the vector fields of the horizontal 
family $\H$.
However,  note that removing such assumption   would destroy uniqueness of integral curves and   dealing efficiently with our almost exponential maps would require nontrivial new ideas.

Before closing this introduction, we mention some recent papers where
nonsmooth vector fields are discussed. In \cite{SawyerWheeden}, the case
of diagonal vector fields is discussed deeply.
In the  H\"ormander case, in the model situation
of equiregular
 families of vector fields,
nonsmooth ball-box theorems have been studied by 
see \cite{KarmanovaVodopyanov,Greshnov}. Finally, \cite{BramantiBrandoliniPedroni2} contains a
nonsmooth lifting theorem.

\paragraph{Acknowledgements.}
We thank Francesco Regonati, who helped us to formalize some of the questions
we encountered in Section \ref{identitone} in the language of polynomial
identities.

\section{Preliminary facts on horizontal regularity}
\subsection{Horizontal regularity classes}
\label{preliminarmente}

\paragraph{Vector fields and
the control distance.}
Consider a
family  $\H=\{ X_1, \dots, X_m\}$  of vector fields
 and assume that   $X_j\in C^1_\Eucl(\R^n)$ for all $j$. Here and later
$C^1_\Eucl$ means $C^1$ in the Euclidean sense.
Write $X_j = :f_j\cdot\nabla$, where $f_j\colon\R^n\to
\R^n$.
The vector field $X_j$, evaluated at a point $x\in \R^n$, will be denoted
by  $X_{j,x}$ or $ X_j(x)$.
All the vector fields in this paper are always defined on  the whole
space $\R^n$.

Define the Franchi--Lanconelli distance \cite{FL}
\begin{equation}\label{dicocco}
\begin{aligned}
 d(x,y)&:=
\inf \Big\{r>0: \text{$y=e^{t_1 Z_1}\cdots e^{t_\mu Z_\mu} x$ for some $\mu\in
\N$}
 \\& \qquad\qquad \qquad
\text{where $\sum \abs{t_j}\le 1$
 with $Z_j\in  r\cal{H} $} \Big\}.
\end{aligned}
\end{equation}
Here  and hereafter we let $r\H:=  \{r X_1,\dots, r X_m\}$ and
$\pm r\H:=  \{\pm r X_1,\dots, \pm r X_m\}$.

Let also $\dcc$ be the Fefferman--Phong  and Nagel--Stein--Wainger distance \cite{FeffermanPhong81,NagelSteinWainger}
\begin{equation}\label{dicacco}
\begin{aligned}
\dcc & (x,y) := \inf \big\{r>0: \text{ there is $\gamma\in
\Lip_\Eucl((0,1),\R^n)$
with $\g(0)=x $}
\\&
   \text{
$\g(1)= y$ and $\dot\gamma(t)\in
\big\{\textstyle\sum_{1\le j\le m} c_j X_{j, \gamma(t)}: \abs{c}\le
r\big\}$
for a.e. $t\in [0,1]$} \big\}.
\end{aligned}
\end{equation}
As usual, we call    \emph{Carnot--Carath\'eodory}  or
\emph{control} distance the
distance $\dcc$.   Note   that in the definition of $\dcc$ we may
choose paths
 $\gamma $ such that    $\dot\gamma =
\sum_j b_j(t) X_j(\gamma)$
where $b : (0,1)\to B_\Eucl(0,r)$ is \emph{measurable},
see Remark \ref{misurella}. In the present paper we shall make a prevalent use
of the distance $d$.
In the definition of both distances we agree that $d(x,y)=+\infty$
 if there are no paths in the pertinent class  which connect $x$ and $y$.

\paragraph{Horizontal regularity classes.}
Here we define   our notion
of horizontal regularity in terms of the distance $d$. Note that we  \emph{do
not} use the control distance $\dcc$.
\begin{definition}\label{montgomery}
  Let    $\H :=\{  X_1, ,\dots,X_m \}$ be a family of
  vector fields, $X_j\in C^1_\Eucl$. Let $d$ be their   distance \eqref{dicocco}
Let $g:\R^n\to \R$. We say that $g$ is \emph{$d$-continuous}, and we write
$g\in
C^0_\H(\R^n)$, if for all $x\in \R^n$, we have $
|g(y)- g(x)|\to 0$, as
$d(x,y)\to 0$.
We say that $g:\R^n\to \R$   is \emph{$\H$-Lipschitz} or \emph{$d$-Lipschitz}
in $A\subset\R^n$ if
\begin{equation*}
 \Lip_\H(g; A):= \sup_{ {x,y\in A,\; x\neq y}} \frac{|g(x)-
g(y)|}{d(x,y)}<\infty.
\end{equation*}
We say  that  $g\in C_\H^1(\R^n)$ if the derivative
$X_j^\sharp g(x): = \lim_{t\to 0}(f(e^{tX_j}x) - f(x))/t$ is a $d$-continuous
function
for any $j=1, \dots, m$.
 We
say that $g\in C^k_{\cal{H}}(\R^n)$ if all the derivatives
$X_{j_1}^\sharp\dots X_{j_p}^\sharp g$ are $d$-continuous for $p\le k$ and $j_1,
\dots,
j_p\in\{1,\dots,m\}$. If all the derivatives
$X_{j_1}^\sharp\dots X_{j_k}^\sharp g$ are $d$-Lipschitz on each $\Omega$
bounded set in the
Euclidean metric, then we say that $g\in
C^{k,1}_{\cal{H},\loc}(\R^n)$.
Finally, denote the usual Euclidean Lipschitz constant of $g$  on $A\subset\R^n$
by $\Lip_{\Eucl}(g; A)$.
\end{definition}

 We will  usually  deal with vector fields which are of class at least
$C^1_\Eucl\cap C^{s-1,1}_{\H,\loc}$, where $s\ge 1$ is a suitable integer.
In this case it   turns out that  commutators up to the order $s$ can be
defined, see Definition \ref{ffff} and Remark \ref{benposto}.
It will take a quite hard work (the whole Section \ref{identitone}) to show that
the different notions given in Definition \ref{ffff} actually agree.

\paragraph{Definitions   of commutator.}
Our  purpose  now  is to show that, given a family $\H$ of vector fields with
$X_j\in C^{s-1,1}_{\H,\loc} \cap C^1_\Eucl$, then commutators  can be   defined
up to
length $s$.

For any $\ell\in \N$,
denote by $\W_\ell:  = \{ w_1\cdots w_\ell: w_j\in \{1,\dots, m\}\}$ the words
of
length $\abs{w}:=\ell$ in the alphabet $1,2,\dots,m$. Let also $\perm_\ell $ be
the
group of  permutations of $\ell$ letters.
\begin{definition}[Coefficients $\pi_\ell(\s)$]
Define $\pi_\ell:\perm_\ell\to \{-1,0,1\}$ as follows: let
us agree that $\pi_1(\s)= 1$ for the unique $\s\in \perm_1$. Then,
for $\s\in \perm_2$, let $ \pi_2(\s): = 1$, if $\s(01)= 01$
 and
$\pi_2(\s) := -1 $, if $\s(01) = 10$.
Then, define inductively   for
$\ell\ge 2$,
\begin{equation} \label{16}
 \begin{cases}
 \pi_{\ell+1}(\wt \s ): = \pi_\ell(\s) &
\text{if\quad$\wt\s(01\cdots\ell)=0\s(1\cdots\ell)$  and $\s\in \perm_\ell$}
\\ \pi_{\ell+1}(\wt \s) := - \pi_\ell(\s )&
\text{if\quad$\wt\s(01\cdots\ell)=\s(1\cdots\ell)0$
and $\s\in \perm_\ell$}
\\ \pi_{\ell+1}(\wt\s): =0 &\text{if\quad$\wt\s_0(01\cdots\ell)\neq 0
\neq\wt\s_\ell(01\cdots\ell)$.}
\end{cases}\end{equation}
\end{definition}
Here we used the notation $\wt \s(01\cdots\ell) = \wt\s_0 (01\cdots\ell)
\wt\s_1 (01\cdots\ell)\cdots\wt\s_\ell (01\cdots\ell)$. The
coefficients $\pi_\ell$ are designed in order to write commutators in a
convenient way. Indeed, if   $A_1, \dots,
A_m:V\to V$ are  linear operators on a  vector space $V$, then one can check
inductively that, given a word $w= w_1\cdots w_\ell$, we have
\begin{equation}
\label{commototo}
[A_{w_1},[A_{w_2},\dots
[A_{w_{\ell-1}},A_{w_\ell}]]\dots ]=  \sum_{\s\in
\perm_\ell}\pi_\ell(\s) A_{\s_1(w)} A_{\s_2(w)}\cdots A_{\s_\ell(w)}.
\end{equation}
We will benefit later of the property
\begin{equation}\label{tacin}
 \pi_\ell(\s_1\cdots\s_\ell)=(-1)^{\ell+1}
\pi_\ell(\s_\ell\cdots\s_1)\quad\text{for all $\s\in\perm_{\ell}$.}
\end{equation}

We are now ready to define commutators for vector fields in our regularity
classes.
\begin{definition}
[Definitions of commutator]\label{ffff}
 Given a family  $\H=\{X_1,\dots X_m\}$ of vector fields  of class
$C^{s-1,1}_{\H,\loc}\cap C^1_\Eucl $,   define,  for $\psi\in C^1_\H$, \(X_j^\sharp\psi(x) :=
\cal{L}_{X_j}\psi(x),\) the Lie derivative; let also $
X_j\psi(x) := f_j(x) \cdot \nabla \psi(x)$ where $\psi\in C^1_\Eucl$.
Moreover, let
\begin{equation*}
\begin{aligned}
f_w &: =  \sum_{\s\in \perm_\ell } {\pi}_\ell(\s)\big( X_{\s_1(w)}\cdots
X_{\s_{\ell -1}(w)}
f_{\s_\ell(w)} \big)\quad\text{for all $w$ with $\abs{w}\le s$,}
 \\
X_w \psi  & :=[X_{w_1}, , \dots, [X_{w_{\ell-1}}, X_{w_\ell}]] \psi
: = f_w \cdot\nabla \psi\quad\text{for all $\psi\in C^1_\Eucl\quad \abs{w}\le
s$,}
\\
 X_w^\sharp      \psi&
: = \sum_{\s\in \perm_\ell } {\pi}_\ell(\s)
X_{\s_1(w)}^\sharp\cdots X_{\s_{\ell-1}(w)}^\sharp
X_{\s_\ell(w)}^\sharp\psi \quad\text{for all
$\psi\in C^{\ell}_{\cal{H}}$\quad $\abs{w}\le s-1$.
}
\end{aligned}
\end{equation*}
Given words  $u $ and $v $, define the (possibly non-nested) commutators
\begin{equation*}
 \begin{aligned}
f_{[u]v} & := X_u^\sharp f_v - X_v^\sharp f_u
\\&=
\sum_{ \alpha \in\perm_p , \beta\in \perm_q}
 \pi_p(\a)\pi_q(\beta)
\big(X_{\alpha_1(u)}\cdots X_{\alpha_p(u)}X_{\b_1(v)}\cdots X_{\b_{q-1}(v)}
f_{\b_q(v)}
\\&\qquad \qquad\qquad\qquad
-X_{\b_1(v)}\cdots   X_{\b_q(v)}X_{\alpha_1(u)}\cdots
X_{\alpha_{p-1}(u)}f_{\alpha_p(u)}\big),
\\
X_{[u]v} & := [X_u, X_v]
 := f_{[u]v}\cdot\nabla
= (X_u^\sharp f_v - X_v^\sharp f_u)\cdot\nabla \qquad\text{if
$\abs{u}+\abs{v}\le s$,}
\\  [X_u, X_v]^\sharp & :=X_{[u]v}^\sharp :=X_u^\sharp X_v^\sharp - X_v^\sharp
X_u^\sharp
\qquad\text{if
$\abs{u}+\abs{v}\le s-1$},
\end{aligned}
\end{equation*}
where $X_{[u]v}$ and $X_{[u]v}^\sharp$ act respectively on
$C^1_\Eucl$
and
$C^{\abs{u}+\abs{v}}_{\cal{H}}$ functions.
Finally,   for any $j\in\{1,\dots, m\}$ and
$w$ with $1\le \abs{w}\le s$, let
\begin{equation}\label{addio}
 \ad_{X_j} X_w \psi : = (X_j^\sharp  f_w - f_w\cdot\nabla f_j )\cdot\nabla \psi
=(X_j^\sharp f_w - X_w f_j) \cdot\nabla\psi \quad\text{for all  $\psi\in
C^1_\Eucl$.}
\end{equation}
\end{definition}
Note that we will never need in this paper
the commutators $X_w^\sharp\psi$ for $\abs{w}=s$.

\begin{remark} Let $Z\in \pm \H $, where $\H$ is a family in $C^{s-1,1}_{\H, \loc}
\cap C^1_\Eucl$.  If $\abs{w}\le s-1$, then there are no
problems in defining $\ad_Z X_w$. More precisely, in    Theorem \ref{dass} we
will see that  $\ad_Z X_w = [Z, X_w]$. If instead
$\abs{w}=s$, then
 the function
$t\mapsto
f_w(e^{t Z}x)$ is   Lipschitz continuous. In particular it is differentiable for
a.e.~$t$. In other words, for any fixed $x\in \R^n$, the limit
$
 \frac{d}{dt} f_w(e^{t Z}x) = : Z^\sharp f_w(e^{tZ}x) $
exists for a.e.~$t$ close to $0$.  Therefore the pointwise derivative $Z^\sharp
f_w(y)$ exists for almost
all
$y\in \R^n$ and ultimately $\ad_Z X_w$ is defined almost everywhere. See  the discussion 
in  Theorem \ref{seminuovo}-(b)  and Proposition \ref{arcano}.
\end{remark}
We will recognize that the first order operator $X_w$ agrees with
$X_w^\sharp$ against functions   $\psi\in C^{s-1,1}_{\cal{H},\loc} \cap
C^1_\Eucl$
and for $|w|\le s-1$. This is trivial if $|w|=1$, because   $X_k\psi :=
f_k\cdot\nabla\psi$ and $X_k^\sharp \psi := \cal{L}_{X_k}\psi$ are the same, if both
$X_k$ and
$\psi$ are $C^1_{\Eucl}$.

\begin{remark}\label{benposto} Both our definitions of commutator, $X_w$ and
$X_w^\sharp$
are well
posed from an
algebraic point of view. Indeed, it is
easy to check that $[X_u, X_v] =
(X_u^\sharp f_v - X_v^\sharp f_u)\cdot\nabla= -[X_v,
X_u]$. Moreover
\begin{equation*}
\begin{aligned}
[X_w, [X_u, X_v]] & = (X_w^\sharp f_{[u]v} - [X_u, X_v]^\sharp f_w)\cdot\nabla
\\&= \{ X_w^\sharp X_u^\sharp f_v - X_w^\sharp X_v^\sharp f_u - X_u^\sharp
X_v^\sharp f_w +
X_v^\sharp X_u^\sharp f_w\}\cdot\nabla,
\end{aligned}
\end{equation*} for any $u,v,w$ with $|u|+|v|+|w|\le s$.
This immediatley implies the Jacobi identity
\begin{equation}\label{jacopa}
 [X_u,  [X_v,X_w ] ] + [X_{v},[X_w. X_u]] + [X_w,[X_u,X_v]]=0.
                                           \end{equation}
 Antisymmetry and the  Jacobi identity for the
  commutators
$X_w^\sharp$ can be checked with the same argument.
\end{remark}

Let $\Omega_0\subset\R^n$ be a fixed open set, bounded in the Euclidean metric.
Given a family   $\H$  of vector fields of
 class $C^1_\Eucl\cap C^{s-1,1}_{\H,\loc}$,  introduce the
constant
\begin{equation}
\label{lipo}
\begin{aligned}
L_0  :&  = \sum_{j_1,\dots, j_s=1}^m\Big\{ \sup_{\Omega_0 }
\Big(  |f_{j_1}| +
|\nabla
f_{j_1}|
+
  \sum_{p\le s} |X_{j_1}^\sharp \cdots
 X_{j_{p-1}}^\sharp f_{j_p}|\Big)
\\  & \qquad
\qquad \qquad +\Lip_\H(X_{j_1}^\sharp\cdots
X_{j_{s-1}}^\sharp f_{j_s};   \Omega_0)\Big\}.
  \end{aligned}
       \end{equation}
Fix also $\Omega\Subset\Omega_0$. We shall always choose points  $x\in\Omega$ and we fix a
constant
$t_0>0$ small enough
to ensure that
\begin{equation}\label{hello}
e^{\t_1 Z_1}\cdots e^{\t_N Z_N}x\in \Omega_0
\quad \text{if $x\in \Omega$,
$Z_j\in \H$, $\abs{\t_j}\le t_0$ and $N\le N_0  $,}
\end{equation}
 where
$N_0$ is a suitable  algebraic constant which depends  on the    data  $n, m$ and
$s$ associated with the family $\H$.

 \subsection{Non commutative   formulas}
\label{semivecchio}
In this section we discuss some preliminary tools on noncommutative
calculus.  Some of the objects we discuss here already
appeared in \cite{MM}, for H\"ormander vector fields,  in a   higher
regularity setting. Observe that Theorem \ref{seminuovo} has also a relevant role in \cite[Lemma~3.1 and Theorem~3.5]{MontanariMorbidelli11d}.

\begin{theorem}\label{seminuovo}
 Let $\H$ be a family of $C^1_\Eucl\cap C^{s-1,1}_{\H,\loc}$ smooth vector
fields. Fix $Z\in\pm\H$ and $X_w $ with  $|w|\ge 1$.
    Then:
 \begin{itemize*}
  \item [(a)] if $|w|\le s-1$,   then, for all $\psi\in C^1_\Eucl$, $y\in
\Omega$ and  $|t|\le t_0$ (see \eqref{hello}) we
have
\begin{equation}\label{lachi}
 \frac{d}{d
 t}X_w(\psi  e^{-t Z})(e^{t Z}y)=\ad_{Z} X_w(\psi e^{-t Z})(e^{t Z}y);
\end{equation}
  \item [(b)]
 if   $|w|=s$,   then  for any  $\psi\in C^1_\Eucl$ and $y\in \Omega$
the function $\phi(t):=  X_w (\psi e^{-t Z})(e^{t Z}y) $ is Euclidean
Lipschitz and satisfies
\begin{equation}\label{arte2}
 \frac{d}{dt}X_w (\psi e^{-tZ})(e^{t Z}x) = \ad_{Z} X_w(\psi e^{-t
Z})(e^{t Z}x)\quad\text{for a.e. $t\in (-t_0, t_0)$}.
\end{equation}
\end{itemize*}
\end{theorem}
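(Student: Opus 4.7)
I would treat (a) and (b) by a single chain-rule computation, then split on the regularity of $f_w$. Since $X_w\psi = f_w\cdot\nabla\psi$ on $C^1_\Eucl$ functions and $\nabla(\psi\circ e^{-tZ})(e^{tZ}y) = [De^{tZ}(y)]^{-T}\nabla\psi(y)$ by the Euclidean chain rule (using $De^{-tZ}(e^{tZ}y) = [De^{tZ}(y)]^{-1}$, obtained by differentiating $e^{-tZ}\circ e^{tZ} = \mathrm{id}$), the key quantity can be recast as
\[
\phi(t) := X_w(\psi e^{-tZ})(e^{tZ}y) = F(t,y)\cdot\nabla\psi(y), \qquad F(t,y) := [De^{tZ}(y)]^{-1} f_w(e^{tZ}y).
\]
The matrix factor $[De^{tZ}(y)]^{-1}$ is genuinely $C^1$ in $t$ (because $Z = f_Z\cdot\nabla$ with $f_Z\in C^1_\Eucl$), with $\partial_t|_{t=0}[De^{tZ}(y)]^{-1} = -Df_Z(y)$, so the whole $t$-regularity of $\phi$ is carried by the scalar map $t\mapsto f_w(e^{tZ}y)$.

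\textbf{Identity at $t=0$ and shift to general $t$.} Differentiating $F$ at $t=0$ and using $\partial_t|_{t=0} f_w(e^{tZ}y) = Z^\sharp f_w(y)$ by the very definition of the Lie derivative, one gets
\[
\partial_t F(0,y) = Z^\sharp f_w(y) - Df_Z(y)\, f_w(y) = Z^\sharp f_w(y) - X_w f_Z(y),
\]
since $(Df_Z\cdot f_w)_i = (f_w\cdot\nabla)(f_Z)_i = X_w(f_Z)_i$. By \eqref{addio} this is exactly the coefficient of $\ad_Z X_w$, so $\phi'(0) = \ad_Z X_w(\psi)(y)$. To promote this to general $t$ I invoke the semigroup property $e^{-(t+h)Z} = e^{-hZ}\circ e^{-tZ}$ of the uniquely defined flow of the $C^1_\Eucl$ field $Z$: setting $\tilde\psi := \psi\circ e^{-tZ}\in C^1_\Eucl$ and $z := e^{tZ}y\in\Omega_0$, one has $\phi(t+h) = X_w(\tilde\psi\circ e^{-hZ})(e^{hZ}z)$, so $\phi'(t)$ equals the $h=0$ derivative of the shifted problem, yielding $\phi'(t) = \ad_Z X_w(\tilde\psi)(z)$---precisely the right-hand sides of \eqref{lachi} and \eqref{arte2}.

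\textbf{Splitting by the regularity of $f_w$.} In case (a), $|w|\le s-1$; by Definition~\ref{ffff} the coefficient $f_w$ is built from $|w|-1\le s-2$ nested horizontal derivatives of the $f_j\in C^{s-1,1}_{\H,\loc}$, hence $f_w$ still admits one $d$-continuous horizontal derivative, so $Z^\sharp f_w$ exists pointwise and the identity \eqref{lachi} holds for every $(t,y)$. In case (b), $|w|=s$ and $f_w\in C^{0,1}_{\H,\loc}$; from the elementary inequality $d(e^{t_1 Z}y, e^{t_2 Z}y)\le|t_1-t_2|$ (immediate from \eqref{dicocco}) the map $t\mapsto f_w(e^{tZ}y)$ is Euclidean Lipschitz with constant controlled by $L_0$ in \eqref{lipo}, so $\phi$ is Lipschitz on $(-t_0,t_0)$ and differentiable a.e.~by the one-dimensional Rademacher theorem; at every such $t$ the chain-rule identification from the previous paragraph applies and gives \eqref{arte2}.

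\textbf{Main obstacle.} The algebraic manipulation is short; the delicate point is making sure that $Z^\sharp f_w$ really exists in the claimed sense---pointwise for $|w|\le s-1$, along a.e.~$Z$-trajectory for $|w|=s$---under only the horizontal hypothesis $X_j\in C^{s-1,1}_{\H,\loc}\cap C^1_\Eucl$. This is essentially the bookkeeping statement that each commutator of length $\ell$ in Definition~\ref{ffff} costs $\ell-1$ horizontal derivatives of the $f_j$'s, and it ultimately ties in with the agreement between $X_w$ and $X_w^\sharp$ analyzed in Section~\ref{identitone}.
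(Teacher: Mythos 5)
Your proof is correct and arrives at the same algebraic identity as the paper, but by a cleaner route. The paper's proof proceeds in two stages: Step~1 establishes, \emph{via mollification}, the general Lie-derivative formula $\frac{d}{dt}U(\psi e^{-tV})(e^{tV}x)=[V,U](\psi e^{-tV})(e^{tV}x)$ for arbitrary $U,V\in C^1_\Eucl$ (smoothing both the vector fields and $\psi$, invoking the classical $C^2$ identity, and observing that the problematic second derivatives cancel before passing to the limit); Step~2 then splits the difference quotient of $\phi$ at $t=0$ into a part tracking $f_w(e^{tZ}x)-f_w(x)$ and a part tracking $\nabla(\psi e^{-tZ})(e^{tZ}x)-\nabla\psi(x)$, applying the Step~1 formula only in the degenerate case $U=\partial_\alpha$; Step~3 treats $|w|=s$ by the same splitting plus the Lipschitz estimate. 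You bypass the mollification entirely by factoring out the Jacobian, $\phi(t)=\bigl([De^{tZ}(y)]^{-1}f_w(e^{tZ}y)\bigr)\cdot\nabla\psi(y)$, and differentiating this product using the variational equation $\partial_t De^{tZ}=Df_Z(e^{tZ}\cdot)De^{tZ}$, which holds under $f_Z\in C^1_\Eucl$ alone. This is essentially equivalent to the paper's use of Step~1 at $U=\partial_\alpha$, but it avoids smoothing $\psi$ and presents the $t$-regularity of $\phi$ as visibly controlled by the single scalar map $t\mapsto f_w(e^{tZ}y)$, which makes the case split between (a) and (b) transparent. Your shift from $t=0$ to general $t$ via the flow semigroup is the same device the paper invokes with ``by uniqueness of the flow of $Z$, we may work with $t=0$.'' One small expository point worth making explicit: in case (b), the derivative $Z^\sharp f_w(e^{tZ}y)$ that appears on the right of \eqref{arte2} must be understood as the pointwise derivative of $t\mapsto f_w(e^{tZ}y)$ at the given differentiability point $t$ (not as a derivative existing for a.e.~$e^{tZ}y$ in the Lebesgue sense), exactly as the paper discusses in the remark preceding the theorem; your invocation of Rademacher in one dimension already produces the correct object, but stating this interpretation removes any ambiguity.
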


To comment on   \eqref{lachi},  assume that $Z= X_j$ for some $j\in
\{1,\dots,m\}$. Note that  in Theorem
\ref{dass} we will show that
$\ad_{X_j} X_w=[X_j , X_w] = X_{jw}$, if $|w|\le s-1$, $j\in\{1, \dots, m\}$.
Looking instead at   equation \eqref{arte2},  the operator
 $\ad_{Z}X_w$ has been
defined in \eqref{addio}. If we assume the H\"ormander condition of step $s$, we shall see in
 Proposition~\ref{arcano} that we can write
\begin{equation}
 \label{campiello}
 \cal{L}_Z X_w ( \psi e^{-tZ})(e^{tZ}x):= \frac{d}{dt}X_w(\psi e^{-tZ})(e^{t
Z}x)= \sum_{1\le |u|\le s} b^u( t
 ) X_u  (\psi e^{-tZ})(e^{t Z}x),
\end{equation}
where the functions $b^u $  may depend on $Z, w,x$ and  are measurable   and
bounded.

\begin{remark}\label{farlocco}
The proof of \eqref{lachi} is standard for smooth (say at least $C^2$) vector
fields, see \cite[Proposition 1.9]{KobayashiNomizu},
or, for a different argument,       \cite[Proposition
3.5]{Aubin01}  and \cite[Lemma 3.1]{MM}.
Note also that
 $U e^{-tV}(e^{tV}x) = e^{-tV}_*(U_{e^{tV}x}) $, by
definition of tangent map. Thus, $
 (\mathcal{L}_V U)_x
=\frac{d}{dt} U
e^{-tV}(e^{tV}x)\bigr|_{t=0}$, by definition of Lie derivative.
Then, in \textit{Step~1}  of the proof below, we are proving nothing
but the
probably known fact that
$\mathcal{L}_VU = (V\xi - U\eta )\cdot\nabla$, if $V=\eta\cdot\nabla$ and $U=
\xi\cdot\nabla\in C^1_\Eucl$.
\end{remark}

\begin{proof} [Proof of Theorem \ref{seminuovo}]
We split the proof in
three steps.

\step {Step 1.}   We prove     that for any   $ U =
\xi \cdot\nabla
\in C^1_\Eucl$ and $V= \eta \cdot\nabla\in
C^1_\Eucl$, we have for all $x\in \R^n$
\begin{equation*}
 \frac{d}{dt} U (\psi e^{-tV })(e^{tV }x) = [V, U](\psi e^{-tV})(e^{tV }x)
\quad\text{if    $|t|$ is small enough.}
\end{equation*}
Here $[V, U]:= (V\xi - U\eta  )\cdot\nabla$ and $\psi\in C^1_{\Eucl}$.

Let $\psi\in C^1_\Eucl$. Take the usual smooth approximations
  $V^{{\s}} = \eta^\s\cdot\nabla $,  $U^{{\s}} = \xi^\s\cdot\nabla$ and
$\psi^\s$.
Since $V$ and $U$ are $C^1$, elementary  properties of Euclidean mollifiers show
that
$\eta^{{\s}}\to \eta$, $\xi^{{\s}}\to \xi$   and
$V^{{\s}} \xi^\s -
U^\s \eta^\s \to V\xi-U\eta$,  uniformly
on compact sets,
as $\sigma\to 0$.
Therefore, we have
\begin{equation*}\begin{aligned}
  \frac{d}{dt} U^{{\s}} (\psi^\s e^{-tV^{{\s}}})(e^{tV^{{\s}}}x)
&= V^{{\s}}
U^{{\s}}(\psi^\s
  e^{-tV^{{\s}}})(e^{tV^{{\s}}}x)
-U^{{\s}} V^{{\s}}(\psi^\s
  e^{-tV^{{\s}}})(e^{tV^{{\s}}}x)
\\&
=(V^\s \xi^\s - U^\s \eta^\s)(e^{t V^\s}x) \cdot \nabla(\psi^\s e^{-tV^\s})(e^{t
V^\s}x)=:R(\s).
\end{aligned}
\end{equation*}
First equality is provided in textbooks, see Remark \ref{farlocco}.
 In our notation, the intermediate term here is  $[V^\s,
U^\s]^\sharp(\psi e^{-tV^{{\s}}})(e^{tV^{{\s}}}x)$. Both its addends
  may have a not clear behaviour, as $\s\to 0$. After the cancellation,
  second derivatives
against $\psi^\s e^{-t V^\s}$ disappear and we may let $\s\to 0$ in the second
line.
By standard ODE
theory, see for example \cite[Chapter 5]{Hartman},
 $\nabla e^{t
V^{{\s}}}\to \nabla e^{tV}$, uniformly on $\Omega$ and $|t|\le t_0$,
as $\sigma\to 0$. Then $\lim_{\s\to 0}R(\s)
= [V, U ](\psi e^{-tV })(e^{tV }x),$ uniformly on $t\in [-t_0, t_0]$
and $x\in \Omega$.
 Moreover,
\[ \lim_{\s\to 0}
U^\s (\psi^\s e^{-tV^\s})(e^{tV^\s}x)
=U  (\psi  e^{-tV} )(e^{tV }x) \quad \text{for all } t\in [-t_0, t_0]\quad  x\in
\Omega.
\]
Therefore,  Step 1 is accomplished.

\step{Step 2.}  We prove statement    (a).  By
uniqueness of the flow of
 ${Z}$, we may work with  $t=0$.
  \begin{equation*}
\begin{aligned}
 \frac{d}{dt} X_w(\psi e^{-t{Z}})(e^{t{Z}}x) \Big|_{t=0}
 =\lim_{t\to 0}\frac{1}{t}
 \Big\{ &  [f_w(e^{t{Z} }x)- f_w(x)]\cdot
 \nabla (\psi e^{-t{Z}})(e^{t{Z}}x)
\\ \qquad
  & + f_w (x) \cdot [\nabla(\psi e^{-t{Z}})(e^{t{Z}}x) - \nabla  \psi
(x)]\Big\}.
\end{aligned}
\end{equation*}
But $\lim_{t\to 0}\frac 1 t  [f_w (e^{t{Z} }x)- f_w (x)]= :{Z}^\sharp f_w (x)$
exists, 
because $ f_w\in C^1_{\cal{H}}$.
Moreover, since $\p_1,\dots, \p_n, {Z}\in C^1_\Eucl$,
Step 1
gives for all $\alpha\in\{1,\dots,n\}$,
\begin{equation}\label{snooze}\begin{aligned}
 \lim_{t\to 0}
\frac{1}{t}  & [\p_\alpha (\psi e^{-t{Z}})(e^{t{Z}}x) - \p_\alpha
\psi (x)]
 =[Z, \p_\a]\psi(x) = -\p_\a f(x)\cdot\nabla\psi(x),
\end{aligned}
\end{equation}
where $Z= f\cdot\nabla$.
This   concludes the proof of Step 2.

\step{Step 3.} Proof of   (b).  We will show that $t\mapsto X_w(\psi e^{-t
Z})(e^{tZ}x)=:\phi(t)$ is
Lipschitz continuous on
$[-t_0, t_0]$ for all $x\in \Omega$.
\begin{equation*}
\begin{aligned}
 \phi(\t)- \phi(t)&= f_w(e^{\t Z}x)\cdot \big[
\nabla (\psi e^{-\t Z})(e^{\t Z}x) - \nabla (\psi e^{-t Z})(e^{t
Z}x)\big]
\\&\quad + [f_w(e^{\t Z}x)- f_w(e^{t Z}x)]\cdot \nabla  (\psi e^{-t
Z})(e^{t Z}x).
\end{aligned}
\end{equation*}
But, since $f_w\in \Lip_{\cal{H}}$, we have
$|f_w(e^{\t Z}x)- f_w(e^{t Z}x)|\le C|\t- t|$. Moreover,
if  $t\in (-t_0, t_0)$
 is a differentiability point for $t\mapsto f_w(e^{tZ }x)$, we have
$
\lim_{\t\to t} (f_w(e^{\t Z}x)- f_w(e^{t Z}x))/(\t-t) = Z^\sharp
f_w(e^{tZ}x),
$ 
by definition of derivative along $Z$.
Finally, for any $\alpha\in\{1, \dots, n\}$, \eqref{snooze} shows that the function 
$t\mapsto \p_\a(\psi e^{-t Z})(e^{tZ}x)\in C^1_\Eucl$ and that
\begin{equation*}
 \frac{d}{dt}\p_\a  (\psi e^{- t Z})(e^{t Z}x) = -\p_\a
f(e^{t Z}x) \cdot\nabla (\psi e^{-t Z})(e^{t Z}x).
\end{equation*}
Then the proof of (b) is easily concluded. 
\end{proof}

\subsection{Integral remainders}
Here we introduce a class of integral remainders $O_p(t^\lambda, \psi, y)$.
There is a  reason why we use a different notation from  the seemingly
similar remainders $R_p(t^\lambda,\psi,y )$ appearing in the Taylor formula
below, see
\eqref{rospo}. Namely,   the remainders $O_{p}(\dots)$ have a
much more
``balanced'' structure. Under suitable involutivity conditions and using such balanced structure, in 
\cite{MontanariMorbidelli11d} 
 we will be able to show
that they can be
given a \emph{pointwise} form (a  consequence  of this fact is  the pointwise form of the  remainders in   expansion~\eqref{polacco}).

Let $\H$ be a family in the regularity class
$C^{s-1,1}_\H\cap C^1_\Eucl$. Let $\lambda\in\N$,
$p\in\{2,\dots,s+1\}$. We
denote, for  $y\in \Omega$   and    $t\in[0,t_0]$, $\psi\in C^1_\Eucl(\Omega_0)$
\begin{equation}\label{nota}
 O_{p }(t^\lambda,\psi ,y)= \sum_{i=1}^N\int_0^t
 \o_i(t,\t )\frac{d}{d\t} X_{w_i}(\psi \phi_i^{-1} e^{-\t Z_{i}})( e^{\t
Z_{i}}\phi_i y)d\t,
\end{equation}
where $N$ is a suitable integer and  $\psi $ is the identity map or
$\psi =\exp(t Z_1)\cdots \exp(t Z_\mu),$ for some  integer $\mu$ and suitable
vector
 fields $Z_j\in\pm \H$.
The ``balanced
structure'' we mentioned above,  follows from identity $(\psi
\phi_i^{-1} e^{-\t
Z_{i}})( e^{\t
Z_{i}}\phi_i y)=\psi(y)$.

To describe the generic  term of the sum  above, we drop the dependence on $i$:
\begin{equation}\label{tipicissimo}
(*):= \int_0^t
 \omega(t,\t) \frac{d}{d\t} X_w(\psi \phi^{-1}e^{-\t  X})( e^{\t X}\phi y)d\t .
\end{equation}
  Here $X_w$ is a commutator of  length   $|w|=p-1$ and  $X\in  \pm \H$.
Moreover, for
 any $t<t_0$,
 the function  $\o(t,\t)$ is a
 polynomial, homogeneous  of degree $\lambda -1$ in  all variables $(t,\t)$,
 so that
\begin{equation}
 \label{opp}
 \int_0^t \omega (t,\t ) d\t =  b t^\lambda\quad \text{for any }  \;t>0
\end{equation}
for a suitable constant $b\in\R$.
  The map $\phi$  is the identity  map or otherwise it      has the form
$\phi= \exp(t  Z_1)\cdots \exp(t  Z_\nu)$
for some $\nu\in\N$, where    $Z_j\in\pm\H$.
 Observe that, if $p\le s$, i.e. $\abs{w}\le s-1$,  Lemma \ref{seminuovo}
(a) gives
 \[
 (*)= \int_0^t
 \omega(t,\t)  \ad_X X_w(\psi \phi^{-1}e^{-\t  X})( e^{\t X}\phi y)d\t .
\]
Therefore the remainder has the same form of the analogous term in
\cite[Eq.~(3.5)]{MM},
provided that  we are able to show that $\ad_X X_w = [X, X_w]$ (this will be
achieved in Theorem~\ref{dass}).
If instead $p= s+1$, i.e. $\abs{w}= s$, we need to use part  Theorem
\ref{seminuovo}-(b) to get some information on the remainder. See also Proposition 
\ref{arcano} and see the paper \cite{MontanariMorbidelli11d} 
for a detailed discussion of  remainders of higher order
$O_{s+1}(\cdots)$.

   A remainder of the form \eqref{nota} satisfies for every
$\alpha,\lambda\in\N$ and $p\le s+1$   estimate
\begin{equation}\label{laprop}
 t^\a O_p{(t^\lambda,\psi ,y})= O_p({t^{\a+\lambda},\psi ,y}) \quad \text{for
all}\quad  y\in \Omega \quad t\in[0, t_0].
\end{equation}
Let us also recall estimate \begin{equation}\label{tredieci}
 | O_{p }(t^\lambda,\psi ,y)|
\le C t^\lambda,                            \end{equation}
which holds for $p\le s+1$, $\lambda\in \N$.
To see~\eqref{tredieci}, just observe that, at any $t$
and
for $j\in\{1,\dots,m\}$ and $\abs{w}\le s$, we have
\begin{equation*}
\begin{aligned}
\Bigl| \frac{d}{d\t} X_w (\psi\phi^{-1}e^{-\t X_j})(e^{\t X_j}\phi x)\Bigr| &=\bigl| \ad_{X_j}X_w
(\psi\phi^{-1}e^{-\t X_j})(e^{\t X_j}\phi x)\bigr|\le C,
\end{aligned}
\end{equation*}
at any  $\t$ such that  $ X_j^\sharp f_w(e^{\t X_j}\phi x)$ exists.  Here we use the trivial estimate
  $\abs{\ad_{X_j}X_w}\le
\abs{X_j^\sharp f_w}+\abs{X_w f_j}\le C$, because $f_w\in\Lip_\H$ and $f_j\in C^1_\Eucl$.

Note finally that, if $j\in\{1, \dots, m\}$,  $p\le s+1$ and   $Z\in
\pm \H$, we have
\begin{equation*}\begin{aligned}
            O_p(t^\lambda, \psi e^{tZ}, y)= O_p(t^\lambda, \psi , e^{tZ} y).
                \end{aligned}
\end{equation*}

\begin{proposition} \label{lasposto}
Assume that
   $p\le s$ and assume that
$\ad_{X_j} X_w = X_{jw}$
for all word  $w$ with length $\abs{w}\le  p-1$ and $j\in\{1, \dots,
m\}$. Then there are constants
$c_w$, $|w|=p,$ such that
\begin{equation}\label{iii} \begin{aligned}
 O_{p }(t^\lambda,\psi ,y)
  &= \sum_{|w|=p}c_w t^{\lambda}X_w \psi (y) + O_{p +1}(t^{\lambda+1} ,\psi ,y).
\end{aligned}
\end{equation}
 \end{proposition}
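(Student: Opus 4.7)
The plan is to analyze a single typical term of $O_p(t^\lambda,\psi,y)$ from \eqref{tipicissimo},
\[
(*) = \int_0^t \omega(t,\tau)\,\frac{d}{d\tau} X_w(\psi\phi^{-1}e^{-\tau X})(e^{\tau X}\phi y)\,d\tau,
\]
with $|w|=p-1$ and $X\in\pm\H$, and show it equals $c_v t^\lambda X_v\psi(y)$ modulo $O_{p+1}(t^{\lambda+1},\psi,y)$ for an appropriate length-$p$ commutator $X_v$; summing over all typical terms and collecting will yield the $c_w$. The first step is a reduction: since $p\le s$ gives $|w|\le s-1$, Theorem~\ref{seminuovo}(a) converts $\frac{d}{d\tau} X_w(\cdots)$ into $\ad_X X_w(\cdots)$. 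The standing hypothesis $\ad_{X_j}X_w=X_{jw}$ for $|w|\le p-1$ (together with $\ad_{-X_j}=-\ad_{X_j}$) then gives $\ad_X X_w=\pm X_v$ for a single length-$p$ commutator $X_v$, so $(*) = \pm\int_0^t\omega(t,\tau) g(\tau)\,d\tau$ with $g(\tau):=X_v(\psi\phi^{-1}e^{-\tau X})(e^{\tau X}\phi y)$.

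Next I would split $g(\tau)=g(0)+\int_0^\tau g'(\sigma)\,d\sigma$; existence of $g'$ is pointwise if $p\le s-1$ and a.e.\ if $p=s$, by Theorem~\ref{seminuovo}(a) or (b) applied to the length-$p$ commutator $X_v$. A Fubini interchange then produces
\[
\int_0^t\omega(t,\tau)g(\tau)\,d\tau = b\,t^\lambda g(0)+\int_0^t\widetilde\omega(t,\sigma)\,\frac{d}{d\sigma}X_v(\psi\phi^{-1}e^{-\sigma X})(e^{\sigma X}\phi y)\,d\sigma,
\]
with $b$ the constant from \eqref{opp} and $\widetilde\omega(t,\sigma):=\int_\sigma^t\omega(t,\tau)\,d\tau$ a polynomial homogeneous of degree $\lambda$ in $(t,\sigma)$. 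The second summand has precisely the shape of a term in $O_{p+1}(t^{\lambda+1},\psi,y)$ (length-$p$ commutator $X_v$, weight polynomial of degree $\lambda$, same $\phi$ and $X$), so it is absorbed into the remainder.

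It remains to replace $g(0)=X_v(\psi\phi^{-1})(\phi y)$ by $X_v\psi(y)$ modulo $O_{p+1}(t^{\lambda+1},\psi,y)$. Writing $\phi=\exp(tZ^1)\cdots\exp(tZ^\nu)$ with $Z^j\in\pm\H$ and defining $\phi_k:=\exp(tZ^{k+1})\cdots\exp(tZ^\nu)$ (so $\phi_0=\phi$, $\phi_\nu=\mathrm{id}$), I would telescope
\[
X_v(\psi\phi^{-1})(\phi y)-X_v\psi(y)=\sum_{k=0}^{\nu-1}\bigl[F_k(t)-F_k(0)\bigr],\quad F_k(s):=X_v(\psi\phi_{k+1}^{-1}e^{-sZ^{k+1}})(e^{sZ^{k+1}}\phi_{k+1}y).
\]
Another application of Theorem~\ref{seminuovo} (again (a) or (b) according to whether $p\le s-1$ or $p=s$) gives $F_k(t)-F_k(0)=\int_0^t F_k'(s)\,ds$, so that
\[
b\,t^\lambda[F_k(t)-F_k(0)] = \int_0^t(bt^\lambda)\,\frac{d}{ds}X_v(\psi\phi_{k+1}^{-1}e^{-sZ^{k+1}})(e^{sZ^{k+1}}\phi_{k+1}y)\,ds,
\]
whose weight $bt^\lambda$ is a polynomial of degree $\lambda$ in $(t,s)$; this is a legal summand of $O_{p+1}(t^{\lambda+1},\psi,y)$, the new inner map $\phi_{k+1}$ still being a product of exponentials of the required type. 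Summing these telescoping contributions over $k$ leaves exactly $bt^\lambda X_v\psi(y)$ as the principal part of $(*)$; summing over all typical terms of $O_p(t^\lambda,\psi,y)$ and grouping by the length-$p$ word $v$ defines the constants $c_v$.

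The main obstacle is the telescoping bookkeeping in the last paragraph: one must verify that each peeled-off term stays within the formal template \eqref{tipicissimo} (correct polynomial degree of the weight, correct length of the inner commutator, correct structure of $\phi$ and of the outer composition) — the identity $(\psi\phi^{-1}e^{-\tau X})(e^{\tau X}\phi y)=\psi(y)$ is what keeps the structure ``balanced'' through these manipulations. All other steps are applications of the Fundamental Theorem of Calculus, Fubini's theorem, and Theorem~\ref{seminuovo}, once the reduction $\ad_X X_w=\pm X_v$ is in hand.
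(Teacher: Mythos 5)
Your argument is correct, and it supplies exactly the content the paper elides by deferring to \cite[Proposition~3.3]{MM}. The three moves — (i) using Theorem~\ref{seminuovo}(a) plus the standing hypothesis $\ad_{X_j}X_w=X_{jw}$ (with $\ad_{-X_j}=-\ad_{X_j}$) to rewrite $\frac{d}{d\tau}X_w(\cdots)$ as $\pm X_v(\cdots)$ with $\abs{v}=p$, (ii) splitting $g(\tau)=g(0)+\int_0^\tau g'$, integrating by Fubini against $\omega$ so that the tail $\int_0^t\tilde\omega(t,\sigma)\,g'(\sigma)\,d\sigma$ with $\tilde\omega(t,\sigma)=\int_\sigma^t\omega(t,\tau)\,d\tau$ lands in $O_{p+1}(t^{\lambda+1},\psi,y)$, and (iii) telescoping $\phi$ one exponential factor at a time to trade $g(0)=X_v(\psi\phi^{-1})(\phi y)$ for $X_v\psi(y)$ up to further $O_{p+1}$ terms — are precisely the steps of the smooth-case proof, and your verification that each peeled-off piece keeps the weight polynomial at homogeneity $\lambda$, the inner commutator at length $p$, the outer map $\phi_{k+1}$ of the admissible form, and the ``balanced'' composition equal to $\psi(y)$, is the adaptation the authors claim must be ``checked.'' The correct bookkeeping of which clause of Theorem~\ref{seminuovo} applies (pointwise when $\abs{v}=p\le s-1$, a.e.\ plus Lipschitz when $p=s$, in both cases justifying the fundamental theorem of calculus for $g$ and for $F_k$) is the only genuinely low-regularity point, and you address it.
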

The proof of Proposition \ref{lasposto} has been given in
\cite[Proposition 3.3]{MM} for smooth vector fields. The argument is the same
in our case. One just needs to check that all the  computations we made there work
perfectely in our regularity setting.
We omit the details.

\begin{remark}
\begin{itemize*}
\item The statement of Proposition \ref{lasposto}, and  in particular
the assumption $\ad_{X_j}X_w = X_{jw}$, is designed in order to be a part of the
induction machinery  we shall implement to prove Theorem
\ref{dass} in the
following section.
\item
The generalization of \eqref{iii} to the case $p= s+1$ is discussed under the H\"ormander condition in Section \ref{cinquemila}, see \eqref{arte}. 
In the companion paper \cite{MontanariMorbidelli11d} we deal with a more general situation.
\end{itemize*}
\end{remark}

\begin{remark}  
Let for a while $\H= \{X_1, \dots, X_m\}$ be a family of smooth vector fields.
Iterating Theorem \ref{seminuovo}, we have for $x\in\Omega$ and $\abs{t}$ 
sufficiently small
 \label{ittero}
\begin{equation}
\label{itero}
\begin{aligned}
X_w (\psi e^{-t Z_\mu}\cdots e^{-t Z_1})x & = \sum_{\abs{\a}=0}^\ell
\ad_{Z_\mu}^{\a_\mu}\cdots\ad_{Z_1}^{\a_1} X_w\psi(e^{-t Z_\mu}\cdots e^{-t
Z_1}x)\frac{t^{\abs{\a}}}{\a!}
\\&\qquad + O_{\ell+\abs{w}}(t^{\ell+1},\psi,x).    \end{aligned}
\end{equation}
Formula \eqref{itero} will
be referred to later.
\end{remark}

\paragraph{Taylor formula with integral remainder.}
Here we show that
functions of class $C^{s-1,1}_{\H,\loc}$ enjoy an elementary
Taylor expansion with integral remainder.
Let $p,\lambda\in \N$. Denote by
$R_p(t^\la, \psi,x)$ a sum of a  finite  number of terms of the
form
 \begin{equation}\label{rospo}
  \int_0^t \omega (t, \tau)\frac{d}{d\tau}(X_{j_1}^\sharp)^{k_1} \cdots
(X_{j_\mu}^\sharp)^{k_\mu} \psi
(e^{\tau X_i}\phi x)d\t,        \end{equation}
where the polynomial   $\o(t,\tau)$ is
  homogeneous  of degree $\lambda -1$ in
all variables $(t,\tau)$. 
 This ensures that
$ \int_0^t \omega (t,\tau ) d\tau =  C t^\lambda,$ for any $t>0$.
  Moreover,  $i ,  j_1, \dots, j_\mu \in \{1,\dots,m\}$,  $k_1 +\cdots + k_\mu =
p-1$.
  The map $\phi$  is the identity map or it      has the form
$\phi= \exp(t  Z_1)\cdots \exp(t  Z_\nu)$
for some  $\nu\in\N$, where
$Z_j\in\pm \H$. If $\Omega\subset\R^n$ is bounded, then we have,
for all $x\in \Omega$,  $\abs{t}\le t_0$,
\begin{equation*}
 |R_p(t^\la, \psi, x)|
\le C  \Lip_\H \big((X_{j_1}^\sharp)^{k_1}\cdots (X_{j_\mu}^\sharp)^{k_\mu}\psi,
B_d(x, C\abs{t})\big)
t^\la,
\end{equation*}
where $t_0$ is positive, small enough, see \eqref{hello}.

Denote $\D^{j_1\cdots j_q}x:= \D^{j_1}\D^{j_2}\cdots\D^{j_q}x:= e^{t
X_{j_1}}
\cdots e^{t X_{j_q}}x,$ where $j_1, \dots, j_q\in \{1,\dots, m\}$.
\begin{lemma} Let $\psi
\in C^{\ell-1, 1}_{\H,\loc}$, for some $\ell\le s$.  Then
for any $q\ge 1$ and $j_1, \dots, j_q\in\{1,\dots,m\}$, we  have in standard
multi-index notation
\begin{equation}
 \label{ttt}\psi(\D^{j_1\cdots j_q}x )= \sum_{\substack {k_1, \dots, k_q\geq
0 \\ k_1+\cdots + k_q \le \ell-1}}
(X_{j_q}^\sharp)^{k_q}\cdots
(X_{j_1}^\sharp)^{k_1}\psi(x)\frac{t^{\abs{k}}}{k!}+ R_\ell(t^\ell,
\psi,x).
\end{equation}
\end{lemma}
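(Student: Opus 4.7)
I would argue by induction on $q\ge 1$, using the base case $q=1$ as the workhorse and invoking the definition of $R_\ell(t^\ell,\psi,x)$ (in particular, the fact that the base point is allowed to be of the form $\phi x$ with $\phi = \exp(tZ_1)\cdots\exp(tZ_\nu)$, $Z_j\in\pm\H$) to absorb the intermediate remainders.

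\paragraph{Base case $q=1$.} For a fixed $x\in\Omega$, set $g(\tau):=\psi(e^{\tau X_{j_1}}x)$. Since $\psi\in C^{\ell-1,1}_{\H,\loc}$, for every $k\le\ell-1$ the derivative $(X_{j_1}^\sharp)^k\psi$ exists and is $d$-continuous, while $(X_{j_1}^\sharp)^{\ell-1}\psi$ is $\H$-Lipschitz. Hence $g\in C^{\ell-1}([-t_0,t_0])$ with $g^{(k)}(\tau)=(X_{j_1}^\sharp)^k\psi(e^{\tau X_{j_1}}x)$, and $g^{(\ell-1)}$ is absolutely continuous in $\tau$. The standard one-variable Taylor formula with integral remainder then gives
\begin{equation*}
\psi(e^{tX_{j_1}}x)=\sum_{k=0}^{\ell-1}(X_{j_1}^\sharp)^k\psi(x)\frac{t^k}{k!}+\int_0^t\frac{(t-\tau)^{\ell-1}}{(\ell-1)!}\frac{d}{d\tau}(X_{j_1}^\sharp)^{\ell-1}\psi(e^{\tau X_{j_1}}x)\,d\tau,
\end{equation*}
and the last term has precisely the form \eqref{rospo} (with $\mu=1$, $k_1=\ell-1$, $\phi=\mathrm{id}$, and $\omega(t,\tau)=(t-\tau)^{\ell-1}/(\ell-1)!$ homogeneous of degree $\ell-1$).

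\paragraph{Inductive step.} Assume \eqref{ttt} for $q$; write $\Delta^{j_1\cdots j_{q+1}}x=\Delta^{j_1\cdots j_q}(e^{tX_{j_{q+1}}}x)$ and apply the inductive hypothesis to $\psi$ at the base point $e^{tX_{j_{q+1}}}x$:
\begin{equation*}
\psi(\Delta^{j_1\cdots j_{q+1}}x)=\!\!\sum_{|k|\le\ell-1}\frac{t^{|k|}}{k!}h_k(e^{tX_{j_{q+1}}}x)+R_\ell\bigl(t^\ell,\psi,e^{tX_{j_{q+1}}}x\bigr),\qquad h_k:=(X_{j_q}^\sharp)^{k_q}\cdots(X_{j_1}^\sharp)^{k_1}\psi.
\end{equation*}
The second term is already an allowed $R_\ell(t^\ell,\psi,x)$, because $\phi=\exp(tX_{j_{q+1}})$ is a legitimate shift in \eqref{rospo}. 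For each multiindex $k=(k_1,\dots,k_q)$ with $|k|\le\ell-1$, the function $h_k$ belongs to $C^{\ell-1-|k|,1}_{\H,\loc}$ (this is a direct consequence of $\psi\in C^{\ell-1,1}_{\H,\loc}$), so the base case applies to $h_k$ along the flow of $X_{j_{q+1}}$ up to order $\ell-|k|$:
\begin{equation*}
h_k(e^{tX_{j_{q+1}}}x)=\sum_{k_{q+1}=0}^{\ell-1-|k|}\frac{t^{k_{q+1}}}{k_{q+1}!}(X_{j_{q+1}}^\sharp)^{k_{q+1}}h_k(x)+R_{\ell-|k|}\bigl(t^{\ell-|k|},h_k,x\bigr).
\end{equation*}
Multiplying by $t^{|k|}/k!$ turns the last term into a remainder in which $\ell-1$ horizontal derivatives of $\psi$ are differentiated (namely $(X_{j_{q+1}}^\sharp)^{\ell-1-|k|}h_k$) and the integral weight is homogeneous of degree $\ell-1$; thus it fits the definition of $R_\ell(t^\ell,\psi,x)$ in \eqref{rospo}. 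Summing over $k$ and relabeling $k_{q+1}$ yields the $(q+1)$-variable expansion, with all residual pieces packaged into a single $R_\ell(t^\ell,\psi,x)$.

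\paragraph{Main obstacle.} The only real subtlety is bookkeeping: one must check that (i) every $(X_{j_{\sigma(1)}}^\sharp)^{m_1}\cdots(X_{j_{\sigma(r)}}^\sharp)^{m_r}\psi$ appearing as an integrand in the combined remainder has exactly $\ell-1$ horizontal derivatives of $\psi$, so that $\frac{d}{d\tau}$ of it is bounded by an $\H$-Lipschitz seminorm of $(X^\sharp)^{\ell-1}\psi$; and (ii) the weights $\omega(t,\tau)$ produced by the induction remain polynomials homogeneous of degree $\ell-1$ in $(t,\tau)$ (which is clear, since we only ever multiply an order-$j$ weight by $t^{\ell-j}$). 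Once both observations are in place, the terms assemble exactly into the definition \eqref{rospo} of $R_\ell(t^\ell,\psi,x)$, completing the induction.
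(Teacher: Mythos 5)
Your proof is correct and follows essentially the same approach as the paper: the base case is the one-variable Taylor formula with integral remainder along a single flow, and the inductive step peels off one exponential and repackages the sub-remainders using the properties of $R_\ell$. The only cosmetic difference is the direction of induction—you append the new exponential $e^{tX_{j_{q+1}}}$ on the inside (applying the inductive hypothesis first, then the base case to each coefficient $h_k$), whereas the paper prepends $e^{tX_{j_0}}$ on the outside (applying the base case first, then the inductive hypothesis to each $(X_{j_0}^\sharp)^{k_0}\psi$); both yield the stated ordering of the derivatives.
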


\begin{proof}
We prove formula \eqref{ttt} by induction on $q\ge 1$. Fix $j\in\{1, \dots,
m\}$.
Let $\psi\in C^{\ell-1, 1}_{\cal{H},\loc}$, Then
$ \bigl(\frac{d}{dt}\bigr)^k \psi(e^{tX_j}x) = (X_j^\sharp)^k
\psi(e^{tX_j}x),$ for
$k=0,1,\dots, \ell-1$.
Moreover, the function $t\mapsto (X_j^\sharp)^{\ell-1}  \psi(e^{tX_j}x)$ is Euclidean
Lipschitz and, for a.e.~$t\in(-t_0, t_0)$, its derivative can be estimated by
$\Lip_\H((X_{j}^\sharp)^{\ell-1} \psi; B_d(x, t_0))$. Therefore,
  the Taylor formula gives
\begin{equation*}
\begin{aligned}
 \psi(e^{tX_j}x)& = \sum_{k=0}^{\ell-1}(X_j^\sharp)^k \psi(x) \frac{t^k}{k!}
+\int_0^t\frac{(t-\tau)^{\ell-1}}{(\ell-1)!}\frac{d}{d\tau}(X_j^\sharp)^{\ell-1}
\psi
(e^{\tau
X_j}x) d\tau
\\&=\sum_{k=0}^{\ell-1}(X_j^\sharp)^k \psi(x) \frac{t^k}{k!} + R_\ell(t^\ell,
\psi,x).
\end{aligned}
\end{equation*}

Next we give the induction step.  Let $q\ge 1$. Then,
\begin{equation*}
 \begin{aligned}
  \psi(\D^{j_0}\D^{j_1\cdots j_q}x)
&=
  \sum_{k_0=0}^{\ell-1}(X_{j_0}^\sharp)^{k_0} \psi
(\D^{j_1\cdots j_q}x) \frac{t^{k_0}}{k_0!} + R_\ell(t^\ell, \psi, \D^{j_1\cdots
j_q}x)
\\&=
\sum_{k_0=0}^{\ell-1} \frac{t^{k_0}}{k_0!}
\Big\{\sum_{ \substack{k_1, \dots, k_q\ge 0\\ k_1+\cdots+ k_q\le \ell-1 -k_0}}
  (X_{j_q}^\sharp)^{k_q}\cdots
(X_{j_1}^\sharp)^{k_1}(X_{j_0}^\sharp)^{k_0}\psi(x) \frac{t^{k_1+\cdots +
k_q}}{k_1!\cdots k_q!}
\\&\qquad\qquad
+ R_{\ell- k_0} (t^{\ell- k_0}, (X_{j_0}^{\sharp})^{{k_0}}\psi,x )\Big\}+
R_\ell(t^\ell,
\psi,x).
\end{aligned}
\end{equation*}
The proof is concluded by   property $ t^{k_0} R_{\ell- k_0} (t^{\ell-
k_0},
(X_{j_0}^\sharp)^{k_0}\psi,x) = R_\ell(t^\ell, \psi,x)$.
\end{proof}

\section{Commutator identities}
\label{identitone}
In this section we show that, if the vector fields  of the family $\H$
belong to  $C^{s-1,1}_{\H,\loc}
\cap C^1_\Eucl$, then  the    various notions of commutators introduced
in
Definition \ref{ffff} agree. This requires a quite elaborate algebraic
work
which will be performed in the first subsection. Later on, 
we will show that
the   machinary we construct, in particular the generalized Jacobi
identities in  Proposition \ref{genero} can be   useful  to detect
nested commutators identities.

  Let $\H = \{X_1, \dots, X_m\}$ be
family
of vector fields of  class $C^1_\Eucl\cap C^{s-1,1}_{\H,\loc}$. We use the
notation
$\W_\ell$ to indicate the set of words $w= w_1\cdots w_\ell$ of length
$\ell$.

The main result of
this section
is the following theorem,
 which has a key role in the proof of 
\cite[Theorems~3.5 and~3.8]{MontanariMorbidelli11d}   and ultimately of Theorem \ref{teoremahormander} here;
see the discussion at the beginning of \cite[Subsection~3.2]{MontanariMorbidelli11d}.

\begin{theorem}\label{dass}
Let $\H$ be a family of vector fields of class
$C^1_\Eucl\cap C^{s-1,1}_{\H,\loc}$. Then,
 if $1\le \ell\le s-1$, the
following statements are equivalent and true.
\begin{itemize*}
 \item [(i)] For any $w\in \W_\ell  $ and for all
$\psi\in C^1_\Eucl\cap C^{\ell,1}_{\H,\loc}$ we have
\begin{equation}\label{dagh}
 X_w\psi = X_w^\sharp\psi.
\end{equation}
\item[(ii)]
For any $Z=\psi \cdot\nabla \in C^1_\Eucl\cap C^{\ell ,1}_{\H,\loc}$ and for all
$w\in
\W_\ell$,  we have
\begin{equation}\label{dasha}
 \ad_{Z} X_w \phi =[Z,X_{w}] \phi \qquad \text{for all  $\phi\in C^1_\Eucl$}.
\end{equation}
\end{itemize*}
\end{theorem}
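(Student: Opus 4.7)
The plan is to prove (i) and (ii) simultaneously by induction on $\ell$, with base case $\ell=1$ handled directly from the definitions. When $\ell=1$, (i) reads $X_j\psi=X_j^\sharp\psi$ for $\psi\in C^1_\Eucl$, which is immediate since the directional derivative $f_j\cdot\nabla\psi$ coincides with the Lie derivative $\mathcal{L}_{X_j}\psi$ when both $X_j$ and $\psi$ are $C^1_\Eucl$; statement (ii) at $\ell=1$ then reduces to the same equality applied componentwise to the coefficient $f$ of $Z=f\cdot\nabla$.

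For the induction step, assume both statements hold at all lengths strictly less than $\ell$. The implication from (i) at level $\ell$ to (ii) at level $\ell$ is essentially automatic: unpacking \eqref{addio} and Definition \ref{ffff} one finds
\begin{equation*}
[Z,X_w]\phi-\ad_Z X_w\phi=(X_w f-X_w^\sharp f)\cdot\nabla\phi,
\end{equation*}
where $Z=f\cdot\nabla$; since the vector-valued coefficient $f$ lies in $C^1_\Eucl\cap C^{\ell,1}_{\H,\loc}$, (i) at level $\ell$ applied componentwise to $f$ forces the difference to vanish. Thus the heart of the matter is to establish (i) at level $\ell$.

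For (i) at level $\ell$, the formal identity \eqref{commototo} would rewrite $X_w^\sharp\psi$ as the nested commutator $[X_{w_1}^\sharp,[X_{w_2}^\sharp,\ldots,[X_{w_{\ell-1}}^\sharp,X_{w_\ell}^\sharp]]\cdots]\psi$, but the intermediate operators $X_v^\sharp$ attached to subwords $v$ only act on functions of regularity $C^{|v|}_\H$ and hence cannot simply be peeled off, since $\psi$ only lies in $C^1_\Eucl\cap C^{\ell,1}_{\H,\loc}$. My approach is to use the generalized Jacobi identities of Proposition \ref{genero}, together with the parity relation \eqref{tacin}, to regroup the permutation sum $\sum_{\sigma\in\perm_\ell}\pi_\ell(\sigma)X_{\sigma_1}^\sharp\cdots X_{\sigma_\ell}^\sharp\psi$ into a leading piece equal to $f_w\cdot\nabla\psi=X_w\psi$ together with a remainder which, through the inductive version of (ii) at lengths $<\ell$ (equating $\ad_Z X_u$ with $[Z,X_u]$), telescopes to zero.

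The main obstacle is that one cannot naively peel off the innermost factor $X_{\sigma_\ell}^\sharp\psi=f_{\sigma_\ell}\cdot\nabla\psi$ and then continue to apply further Lie derivatives by Leibniz, since this product has only $C^0_\Eucl$ regularity and subsequent horizontal differentiations along $X_{\sigma_{\ell-1}},\ldots,X_{\sigma_1}$ cannot be computed pointwise. Terms must instead be grouped in a permutation-symmetric fashion so that higher-order Euclidean derivatives of $\psi$ cancel \emph{before} any individual differentiation is attempted. This is exactly the combinatorial content engineered into Proposition \ref{genero}, with Theorem \ref{seminuovo} and the integral-remainder machinery of Section~2.3 serving as the analytic tools that justify the surviving manipulations under the available horizontal regularity.
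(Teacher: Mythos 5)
Your reduction steps are sound: the equivalence of (i) and (ii) at level $\ell$ follows componentwise exactly as you say, and the base case $\ell=1$ is immediate. You also correctly identify the central obstruction, namely that one cannot peel Lie derivatives off $X_{\sigma_1}^\sharp\cdots X_{\sigma_\ell}^\sharp\psi$ and apply Leibniz at this low regularity. But the proposal has a real gap in what it claims happens next: the phrase ``a remainder which\dots telescopes to zero'' via the inductive hypothesis is not what actually resolves the problem, and you do not indicate a mechanism that would make it work.

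The paper's argument is substantially more structured. It first establishes Lemma~\ref{lemmauno}, a finite-difference characterization
\[
X_w^\sharp\psi(x)=\lim_{t\to 0}\frac{1}{t^\ell}\sum_{\sigma\in\perm_\ell}\pi_\ell(\sigma)\,\psi\bigl(\Delta^{\sigma_\ell(w)\cdots\sigma_1(w)}x\bigr),
\]
and then computes this limit by de~l'H\^opital's rule, differentiating the numerator $g(t)$ via Theorem~\ref{seminuovo} and the iterated expansion~\eqref{itero}. The resulting expression splits into three families of terms. The term $h_{\ell,\ell}$ produces $X_w\psi$ by the generalized Jacobi identity~\eqref{J2} and the parity relation~\eqref{tacin} (\emph{Fact 1}), which is the part you have in mind. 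But the remainder does \emph{not} simply telescope: the terms $h_{\ell,p}$ with $p<\ell$ vanish only because of Proposition~\ref{postace}, a separate family of nested-commutator identities $(F_{\ell,p})$ that you do not mention and that is genuinely distinct from Proposition~\ref{genero}. Worse, the terms $H_\mu(t)$ cannot be handled by any direct algebraic manipulation: the paper explicitly notes that their coefficient algebra ``is of considerable algebraic difficulty'' and instead argues \emph{indirectly}, observing that the coefficients are universal polynomial identities and invoking Theorem~\ref{amiz} (an Amitsur--Levitzki-style argument) together with the already-known validity of the identity for analytic vector fields, to conclude the polynomials $P_\lambda$ are identically zero. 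None of Lemma~\ref{lemmauno}, the de~l'H\^opital device, Proposition~\ref{postace}, or Theorem~\ref{amiz} appears in your outline, and without them the ``regroup and telescope'' step cannot be completed. You should replace the hand-wave with the finite-difference formula and the three-fact decomposition, and in particular realize that the hardest remainder term requires the indirect polynomial-identity argument, not an algebraic cancellation.
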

\begin{remark}\label{lachi3}
In view of Theorem \ref{dass}, formula \eqref{lachi} in
 Theorem~\ref{seminuovo} becomes,
\begin{equation}\label{lachi2}
 \frac{d}{d
 t}X_w(\psi e^{-t Z})(e^{t Z}y)=[Z,X_w](\psi e^{-t Z})(e^{t Z}y)
\quad\text{if $|w|\le  s-1$\quad $t\in(-t_0,t_0)$}.
\end{equation}
The case $\abs{w}=s$ will be discussed in Section \ref{cinquemila}, see 
  e.g.~Proposition~\ref{arcano}.
\end{remark}

To prove  Theorem \ref{dass}, we need the following proposition which may have
some independent interest.

\begin{proposition}[Generalized Jacobi identities]
\label{genero}   Let $\H$ be a family in the regularity class
$C^{s-1,1}_{\H,\loc}
\cap C^1_{\Eucl}$.
For any $v\in \W_p$, $w\in \W_q$, $p,q\ge 1$, $p+q\le s$, we
have
\begin{equation}\label{otto}
 X_{[ v]w}= \sum_{\s\in \perm_p}\pi_p(\s) X_{\s_1(v)\dots\s_p(v)w}.
\end{equation}
If $\abs{w}=0$, then \eqref{otto} fails, but  for any $v= v_1\cdots
v_\ell\in \W_\ell$,
$\ell\le s$, we have
\begin{equation}\label{J2}
 X_{v} =\frac 1\ell \sum_{\s \in \perm_\ell}
\pi_\ell(\s)X_{\s_1(v)\dots\s_\ell(v)}.
\end{equation}
\end{proposition}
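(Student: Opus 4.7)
The plan is to prove both identities by induction, establishing (\ref{otto}) first by induction on $p = |v|$, and then deriving (\ref{J2}) by induction on $\ell$, invoking (\ref{otto}) inside the step.

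For (\ref{otto}), the base case $p = 1$ is immediate since $\perm_1 = \{\text{id}\}$ and $X_{[v_1]w} = [X_{v_1}, X_w] = X_{v_1 w}$ by the very definition of the nested commutator. For the inductive step I would write $v = v_0 v'$ with $|v'| = p - 1$, so that $X_v = [X_{v_0}, X_{v'}]$, and apply the Jacobi identity (valid in our setting by Remark \ref{benposto}):
\[
X_{[v]w} = \bigl[X_{v_0}, [X_{v'}, X_w]\bigr] - \bigl[X_{v'}, [X_{v_0}, X_w]\bigr].
\]
The induction hypothesis applied to the pair $(v', w)$ expands $[X_{v'}, X_w] = \sum_{\tau \in \perm_{p-1}} \pi_{p-1}(\tau) X_{\tau(v') w}$, after which $[X_{v_0}, X_{\tau(v') w}] = X_{v_0 \tau(v') w}$ by the definition of the nested commutator having $v_0$ in front. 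For the second bracket, $[X_{v_0}, X_w] = X_{v_0 w}$, and then the hypothesis applied to $(v', v_0 w)$ (whose lengths still sum to $p + q \le s$) gives $[X_{v'}, X_{v_0 w}] = \sum_\tau \pi_{p-1}(\tau) X_{\tau(v') v_0 w}$. Subtracting,
\[
X_{[v]w} = \sum_{\tau \in \perm_{p-1}} \pi_{p-1}(\tau) \bigl( X_{v_0 \tau(v') w} - X_{\tau(v') v_0 w} \bigr),
\]
and this is exactly $\sum_{\sigma \in \perm_p} \pi_p(\sigma) X_{\sigma(v) w}$ in view of the recursive definition of $\pi_p$: the nonzero permutations in $\perm_p$ arise from a $\tau \in \perm_{p-1}$ by inserting the letter $v_0$ at the leftmost slot of $\sigma(v)$ (with sign $+\pi_{p-1}(\tau)$) or the rightmost slot (with sign $-\pi_{p-1}(\tau)$).

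For (\ref{J2}), the case $\ell = 1$ is trivial. For $\ell \ge 2$, write again $v = v_0 v'$ with $|v'| = \ell - 1$ and use the same recursive splitting of $\pi_\ell$ to obtain
\[
\sum_{\sigma \in \perm_\ell} \pi_\ell(\sigma) X_{\sigma(v)} = \sum_{\tau \in \perm_{\ell-1}} \pi_{\ell-1}(\tau) X_{v_0 \tau(v')} - \sum_{\tau \in \perm_{\ell-1}} \pi_{\ell-1}(\tau) X_{\tau(v') v_0}.
\]
For the first sum, $X_{v_0 \tau(v')} = [X_{v_0}, X_{\tau(v')}]$ by definition, so the inductive hypothesis of (\ref{J2}) applied to $v'$ gives $\sum_\tau \pi_{\ell-1}(\tau) X_{v_0 \tau(v')} = [X_{v_0}, (\ell - 1) X_{v'}] = (\ell - 1) X_v$. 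For the second sum, (\ref{otto}) applied to the pair $(v', v_0)$ combined with antisymmetry (Remark \ref{benposto}) gives $\sum_\tau \pi_{\ell-1}(\tau) X_{\tau(v') v_0} = [X_{v'}, X_{v_0}] = - X_v$. Adding these yields $\ell X_v$, which is (\ref{J2}).

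The main conceptual obstacle is aligning the recursive combinatorial definition of $\pi_\ell$---namely that the nonzero permutations are the ``zigzag'' ones in which the distinguished letter $v_0$ lies at one end---with the algebraic consequences of Jacobi and antisymmetry. Once that alignment is made explicit, both inductions are short. One may recognize (\ref{J2}) as a version of the Dynkin--Specht--Wever idempotent identity applied to the Lie element $X_v$, but the direct induction above is self-contained and uses only the commutator identities already established in Remark \ref{benposto}.
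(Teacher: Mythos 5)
Your proposal is correct and follows essentially the same route as the paper: both inductions split off the first letter $v = v_0 v'$, invoke the recursive definition \eqref{16} of $\pi$ to reduce the $\perm_p$-sum to a $\perm_{p-1}$-sum of "$v_0$ at the front" minus "$v_0$ at the back", and close the argument with the Jacobi identity and antisymmetry from Remark \ref{benposto} (plus, for \eqref{J2}, a call to the already-proved \eqref{otto}). The only cosmetic difference is the direction: the paper starts from the permutation sum and derives the nested commutator, whereas you start from $X_{[v]w}$ and expand it; the algebraic content and the cancellation mechanism are identical.
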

Before proving the proposition, to explain the reason of our terminology,  we
give a couple of examples to show that our identities, suitably specialized,
give back some familiar identities. See also Subsection \ref{chetbaker}.
\begin{example} Let
$X_1, X_2$ and $X_3$  be sufficiently  smooth vector fields. Then
\begin{equation*}
\begin{aligned}
 [X_1, [X_2, X_3]] & = : X_{123} =
\frac{1}{3}\sum_{\s\in\perm_3}\pi_3(\s)X_{\s_1(123)\s_2(123)\s_3(123)}
\\&= \frac 13\big\{X_{123} - X_{132} - X_{231} + X_{321}\big\}
\\&= \frac 13 \big\{[X_1, [X_2, X_3]] - [X_1, [X_3, X_2]] - [X_2,[X_3,
X_1]]+ [X_3,[X_2, X_1]]\big\}
\\&= \frac 23 [X_1, [X_2, X_3]] -
\frac 13  [X_2,[X_3,X_1]] -\frac 13 [X_3, [X_1, X_2]].
\end{aligned}
\end{equation*}
Comparing the first and the list line one can recognize the familiar  Jacobi
identity.
\end{example}
\begin{example}
Here, looking at the fourth order  identity \eqref{J2} with $\ell=4$ and taking
$w= 1212$,  we check the nested commutators identity
\begin{equation}\label{oteo}
 X_{1212} = X_{2112} = -X_{1221}
\end{equation}
discussed in \cite[eq.~(4.3)]{Oteo}.
To get   \eqref{oteo}, start from the $4$-th order formula
\begin{align*}
 X_{1234} &= \frac{1}{4}\{X_{1234}- X_{1243}- X_{1342}+ X_{1432} - X_{2341}+
X_{2431} + X_{3421} - X_{4321}\}.
\end{align*}
Letting  $1$ instead of  $3$ and $2$ instead of $4$, we get
\begin{align*}
4 X_{1212}   &=X_{1212}  - X_{1221}- X_{1122}+ X_{1212} - X_{2121}+
X_{2211}+ X_{1221}- X_{2121} ,
\end{align*}
which is equivalent to $ 2 X_{1212} = - 2X_{2121}$, and gives immediately
\eqref{oteo}.
\end{example}

\begin{proof}[Proof of Proposition \ref{genero}] To prove \eqref{otto}, we
argue by  induction on $\abs{v}$.
The property is trivial if
$\abs{v}=1$ and $1\le \abs{w}\le s-1$. Assume
that for a given $p\in\{1,\dots,s-2\}$, formula \eqref{otto} holds for all $v,w$
with $\abs{v}=p$ and $1\le \abs{w}\le s-p$
and we will prove that it holds for any $v,w$
with $|v|= p+1$ and $1\le \abs{w}\le s-p-1$.

Write  $\wt v= v_0 v\in \W_{p+1}$ and $\wt\s(\wt v) = \wt\s_0(v_0 v)\dots
\wt\s_p(v_0 v)$. Then the defining property \eqref{16} of the coefficients $\pi(\s)$ gives
 \begin{equation*}
\begin{aligned}
&\sum_{\wt \s\in \perm_{p+1}} \pi_{p+1}(\wt \s)  X_{\wt\s_0(\wt v)\wt\s_1(\wt
v)\cdots\wt\s_p(\wt v)w}
\\&=\sum_{\s\in\perm_p}\pi_p(\s)\big(
X_{ v_0 \s_1(  v)\cdots \s_p(  v)w }-
X_{  \s_1(  v)\cdots \s_p(  v) v_0w} \big)
\\&
 = \Big[X_{v_0}, \sum_{\s\in\perm_p}\pi_p(\s) X_{\s_1(v)\cdots\s_p(v)w}
\Big]-\sum_{\s\in\perm_p}\pi_p(\s)X_{\s_1(v)\cdots\s_p(v) v_0 w}
\quad \text{(inductive assumption)}
\\&=[X_{v_0}, X_{[v]w}] - X_{[v]v_0w}
= X_{[v_0 v]w},
\end{aligned}
 \end{equation*}
by the Jacobi identity \eqref{jacopa} and the antisymmetry. Thus \eqref{otto}
is proved.

To prove   \eqref{J2}, we work by induction. The statement for $\ell=2$ is
obvious.
 Assume that \eqref{J2} holds for
some $\ell\in\{2,\dots, s-1\}$. We need to show that
\begin{equation*}
 X_{v_0v}= \frac{1}{\ell +1}\sum_{\s\in \perm_{\ell+1}}
\pi_{\ell+1}(\wt\s) X_{\wt\s_0\wt\s_1\dots\wt\s_\ell}
\quad\text{for all $v_0v= v_0 v_1\cdots v_\ell\in \W_{\ell+1}$,}
\end{equation*}
where for all $j$  we denoted $\wt \s_j= \wt\s_j(v_0v)$.
But the  definition of $\pi_{\ell+1}$,  the induction assumption and
\eqref{otto} show
that
\begin{equation*}
 \begin{aligned}
 \sum_{\wt \s\in \perm_{\ell+1}}
  \pi(\wt \s ) X_{\wt \s_0\wt \s_1 \cdots \wt \s_\ell} &=
  \sum_{\s\in \perm_\ell}  \pi_\ell(\s) (X_{v_0\s_1\cdots\s_\ell} -
X_{\s_1\cdots\s_\ell v_0})
 \\&=[X_{v_0},\sum_{\s\in\perm_\ell}\pi_\ell(\s)X_{\s_1\dots\s_\ell}]-
\sum_{\s\in\perm_\ell}
\pi_\ell (\s)
X_{\s_1\cdots\s_\ell v_0}
\\&=  \ell X_{v_0v}  -X_{[v]v_0} =(\ell+1) X_{v_0v},
 \end{aligned}
\end{equation*}
as desired.
\end{proof}

Recall the notation $\Delta^{k_1\cdots k_\ell }x:= e^{t X_{k_1}}\cdots e^{t
X_{k_\ell}}x$, where $\ell\in \N$ and $k_j\in\{1,\dots,m\}$.
\begin{lemma}\label{lemmauno}
For any $\ell\in\{2,\dots, s-1\}$, for each
$w\in\W_\ell$ and for each  $\psi\in   C^{\ell,1 }_{\H,\loc}$, we have
\begin{equation*}
X_w^\sharp  \psi(x)
= \lim_{t\to 0}
\frac{1}{t^\ell}
\sum_{\s\in \perm_\ell}\pi_\ell(\sigma)
\psi(\D^{\s_\ell(w) \cdots\s_1(w)}x)\quad\text{for all $x\in\R^n$}.\end{equation*}
\end{lemma}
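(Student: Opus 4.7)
The plan is to Taylor-expand each summand $\psi(\D^{\s_\ell(w)\cdots\s_1(w)}x)$ to order $\ell$, collect the coefficient of each power of $t$, and match the result against the sum that arises on the letter-indexed side of the commutator identity \eqref{commototo}; the required cancellations will boil down to the trivial fact that any nested commutator containing the identity operator vanishes.

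First, since $\psi\in C^{\ell,1}_{\H,\loc}$ and $\ell+1\le s$, formula \eqref{ttt} (applied with $\ell$ there replaced by $\ell+1$) yields, for each $\s\in\perm_\ell$,
\begin{equation*}
\psi(\D^{\s_\ell(w)\cdots\s_1(w)}x) = \sum_{\substack{k\in\N^\ell \\ |k|\le\ell}}\frac{t^{|k|}}{k!}(X_{\s_1(w)}^\sharp)^{k_\ell}\cdots(X_{\s_\ell(w)}^\sharp)^{k_1}\psi(x) + R_{\ell+1}(t^{\ell+1},\psi,x).
\end{equation*}
Multiplying by $\pi_\ell(\s)$, summing over $\s$ and grouping powers of $t$, the right-hand side of the lemma becomes $\sum_{p=0}^{\ell}t^p V_p\psi(x)+O(t^{\ell+1})$, with
\begin{equation*}
V_p \,:=\, \sum_{\substack{k\in\N^\ell \\ |k|=p}}\frac{1}{k!}\sum_{\s\in\perm_\ell}\pi_\ell(\s)(X_{\s_1(w)}^\sharp)^{k_\ell}\cdots(X_{\s_\ell(w)}^\sharp)^{k_1}.
\end{equation*}
Dividing by $t^\ell$ and letting $t\to 0$, the claim reduces to showing $V_p=0$ for $p<\ell$ and $V_\ell = X_w^\sharp$.

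To identify these operators I introduce the truncated pullback polynomial $A_j(t):=\sum_{a=0}^{\ell}\tfrac{t^a}{a!}(X_j^\sharp)^a$, whose coefficients act well-definedly on $C^{\ell}_{\H}$. Applying the purely algebraic identity \eqref{commototo} to the family $A_1(t),\dots,A_m(t)$ provides the polynomial identity (in $t$, with operator coefficients)
\begin{equation*}
\sum_{\s\in\perm_\ell}\pi_\ell(\s)\,A_{\s_1(w)}(t)\cdots A_{\s_\ell(w)}(t) = [A_{w_1}(t),[A_{w_2}(t),\ldots,[A_{w_{\ell-1}}(t),A_{w_\ell}(t)]]\ldots].
\end{equation*}
Expanding the left-hand side directly reproduces $V_p$ as the coefficient of $t^p$ (for $p\le\ell$ the cutoff $a_i\le\ell$ is inactive and may be dropped), while expanding the right-hand side by multilinearity of the nested commutator in its $\ell$ arguments exhibits that same coefficient as
\begin{equation*}
\sum_{\substack{a\in\N^\ell \\ |a|=p}}\frac{1}{a!}\,[(X_{w_1}^\sharp)^{a_1},[(X_{w_2}^\sharp)^{a_2},\ldots,[(X_{w_{\ell-1}}^\sharp)^{a_{\ell-1}},(X_{w_\ell}^\sharp)^{a_\ell}]]\ldots].
\end{equation*}

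With this reformulation the conclusion is immediate. Any $\ell$-fold nested commutator one of whose arguments equals the identity operator $I$ vanishes, since $[\,\cdot\,,I]=[I,\,\cdot\,]=0$ propagates from the innermost bracket containing $I$ outward. For $p<\ell$, every $a$ with $|a|=p$ has some $a_i=0$, whence $(X_{w_i}^\sharp)^{a_i}=I$ kills the summand and $V_p=0$. For $p=\ell$ the only tuple of non-negative integers of weight $\ell$ with every $a_i\ge 1$ is $a=(1,\dots,1)$, whose contribution equals $[X_{w_1}^\sharp,[X_{w_2}^\sharp,\ldots,[X_{w_{\ell-1}}^\sharp,X_{w_\ell}^\sharp]]\ldots]=X_w^\sharp$ by Definition~\ref{ffff}; hence $V_\ell=X_w^\sharp$. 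The main conceptual obstacle is matching the position-indexed Taylor expansion (the left side of the polynomial identity) with the letter-indexed nested-commutator expansion (the right side); once formulated through the formal device $A_j(t)$ and multilinearity, the required cancellations are forced.
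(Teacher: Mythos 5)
Your proof is correct, and it takes a genuinely different and arguably cleaner route than the paper's. The paper establishes the full Taylor expansion \eqref{jn} by induction on $\ell$: the base case $\ell=2$ is direct, and the inductive step exploits the recursive definition \eqref{16} of $\pi_{\ell+1}$ (splitting the sum over $\widetilde\s$ into the cases $v_0$-first and $v_0$-last) together with the consequence \eqref{wint} of the inductive hypothesis to kill all sub-leading coefficients. You instead obtain the same coefficient identities directly, for all $\ell$ at once, by substituting the truncated pullback polynomial $A_j(t)=\sum_{a\le\ell}\tfrac{t^a}{a!}(X_j^\sharp)^a$ into the purely algebraic identity \eqref{commototo}, expanding the nested commutator by multilinearity in its $\ell$ slots, and observing that any $\ell$-fold nested commutator containing an identity argument vanishes—so that $V_p=0$ for $p<\ell$ is immediate, and $V_\ell$ reduces to the single term $a=(1,\dots,1)$, which is $X_w^\sharp$ by \eqref{commototo} again. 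Your argument thus trades the paper's combinatorial induction on $\ell$ for a single structural observation, and it makes transparent \emph{why} the sub-leading coefficients vanish (they are nested commutators with an identity slot), which the paper's recursion obscures. The one point to be careful about—and you essentially flag it—is that the polynomial identity is to be read coefficient-by-coefficient as a formal identity in the free associative algebra generated by $X_1^\sharp,\dots,X_m^\sharp$, and only the coefficients of $t^p$ with $p\le\ell$ are applied to $\psi$; these involve derivatives of total order at most $\ell$, so the regularity $\psi\in C^{\ell,1}_{\H,\loc}$ suffices even though the full product $A_{\s_1(w)}(t)\cdots A_{\s_\ell(w)}(t)$ nominally contains derivatives of order up to $\ell^2$. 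With that understanding the proof is complete.
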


\begin{proof}  To prove the statement, we shall show  the Taylor expansion
\begin{equation} \label{jn}
 \sum_{\s\in\perm_\ell}\pi_\ell(\s)\psi(\D^{\s_\ell\cdots\s_1}x) = t^\ell
X_w^\sharp \psi(x) + R_{\ell+1}(t^{\ell+1},\psi,x)\quad\text{for all $\psi\in
C^{\ell,1}_{\H,\loc}$.}
\end{equation}
 We will work  by induction. The statement for $\ell=2$ follows
immediately from the Taylor formula  \eqref{ttt}. Indeed
\begin{equation*}
\begin{aligned}
\psi(\Delta^{kj}x) &
=\psi(x)+ t(X_j^\sharp \psi+ X_k^\sharp \psi)(x) 
\\&\quad+
\frac{t^2}{2}\big((X_k^\sharp )^2\psi +
(X_j^\sharp )^2\psi + 2 X_j^\sharp  X_k^\sharp \psi \big)(x) + R_3(t^3,\psi,x),
\end{aligned}
\end{equation*}
 where  $j,k\in \{1,\dots,m\}$.
Thus $\psi(\D^k\D^j x ) - \psi (\D^j\D^k x) = t^2 X_{jk}^\sharp\psi(x)
+
R_3(t^3,\psi,x).$

Let us assume that \eqref{jn} holds for some $\ell\in\{2,\dots, s-2\}$.
Looking at the
Taylor expansion \eqref{ttt}, this means that
\begin{equation*}
 \label{babao}
\sum_{\s\in\perm_\ell}\sum_{\abs{\a}=0}^\ell
\pi_\ell(\s)\frac{t^{\abs{\a}}}{\a!}
(X_{\s_1}^\sharp)^{\a_1} \cdots (X_{\s_\ell}^\sharp)^{\a_\ell}\psi(x) = t^\ell
X_w^\sharp\psi(x)\quad\text{for all $t,x$ and $\psi\in C^{\ell,1}_{\H,\loc}$}.
\end{equation*}
In particular, if $k\le \ell-1 $, we have
\begin{equation}\label{wint}
\sum_{\abs{\a}=0}^{k} \frac{t^{\abs{\a}}}{\a!}
\sum_{\s\in\perm_\ell}\pi_\ell(\s)
(X_{\s_1}^\sharp)^{\a_1} \cdots (X_{\s_\ell}^\sharp)^{\a_\ell}\psi(x) =0
\quad\text{for all $t,x$.}
\end{equation}
In order to prove the induction step,  let $\psi\in C^{\ell+1,1}_{\H,\loc}$.
Then, omitting all the $\sharp$ symbols
\begin{equation*}
\begin{aligned}
&\sum_{\wt\s\in\perm_{\ell+1}}
  \pi_{\ell+1}(\wt\s)\psi(\D^{\wt\s_\ell\cdots\wt\s_1\wt\s_0 }x) \\& =
\sum_{\s\in \perm_\ell}\pi_\ell(\s)
\Big( \psi(\D^{\s_\ell\cdots\s_1 w_0 }x) -
\psi(\D^{w_0\s_\ell\cdots\s_1  }x)\Big)
\\&=\sum_{  \abs{\a}+\b=0}^{\ell+1}\frac{t^{\abs{\a}+\b}}{\a! \b!}\sum_{\s\in
\perm_\ell} \pi_\ell(\s)\big(X_{w_0}^\b X_{\s_1}^{\a_1} \cdots
X_{\s_\ell}^{\a_\ell}\psi(x) -
 X_{\s_1}^{\a_1} \cdots
X_{\s_\ell}^{\a_\ell} X_{w_0}^\b\psi(x)
  \big)
\\&\quad+R_{\ell+2}(t^{\ell+2},\psi,x)
\\&= \sum_{\abs{\a}=0}^{\ell+1} \frac{t^{\abs{\a}}}{\a!}
 \sum_{\s\in \perm_\ell}\pi_\ell(\s)
\big(  X_{\s_1}^{\a_1} \cdots
X_{\s_\ell}^{\a_\ell}\psi(x) -
 X_{\s_1}^{\a_1} \cdots
X_{\s_\ell}^{\a_\ell} \psi(x)
  \big)
\\&\quad +
 t \sum_{\abs{\a}=0}^\ell \frac{t^{\abs{\a}}}{\a!}
\sum_{\s\in \perm_\ell}\big(X_{w_0} X_{\s_1}^{\a_1} \cdots
X_{\s_\ell}^{\a_\ell}\psi(x) -
 X_{\s_1}^{\a_1} \cdots
X_{\s_\ell}^{\a_\ell} X_{w_0}\psi(x)
  \big)
\\&\quad +\sum_{\b=2}^{\ell+1}\frac{t^\b}{\b!}
\sum_{\abs{\a}=0}^{\ell+1-\b}\frac{t^{\abs{\a}}}{\a!}
\sum_{\s\in \perm_\ell}\pi_\ell(\s)
\big(X_{w_0}^\b X_{\s_1}^{\a_1} \cdots
X_{\s_\ell}^{\a_\ell}\psi(x) -
 X_{\s_1}^{\a_1} \cdots
X_{\s_\ell}^{\a_\ell} X_{w_0}^\b\psi(x)
  \big)
\\&\quad +R_{\ell+2}(t^{\ell+2},\psi,x).
\end{aligned}
\end{equation*}
Now note that the first line (case $\b=0$) vanishes trivially. The third
line, where $\beta\ge 2$,  vanishes
by virtue of \eqref{wint} (note that $X_{w_0}^\b\psi\in
C^{\ell+1-\b,1}_{\H,\loc}$).
It remains the term with  $\beta=1$ which   gives
\begin{align*}
  \sum_{\wt\s\in\perm_{\ell+1}}
  \pi_{\ell+1}(\wt\s)\psi(\D^{\wt\s_\ell\cdots\wt\s_1\wt\s_0 }x)  &=
t^{\ell+1}(X_{w_0}^\sharp X_w^\sharp \psi(x)- X_w^\sharp X_{w_0}^\sharp \psi(x))
+R_{\ell+2}(t^{\ell+2},\psi,x)
\\&=t^{\ell+1}X_{w_0 w}^\sharp\psi(x) +R_{\ell+2}(t^{\ell+2},\psi,x),
\end{align*}
by definition of $X_{w_0w}^\sharp$.
\end{proof}

\begin{proof} [Proof of Theorem \ref{dass}]
 We first show that (i) and (ii) are   equivalent for $\ell=2,\dots, s-1$. The
statement is obvious if $\ell=1$. Let now $\ell\in \{2,\dots, s-1 \}$
and take  $Z=
\psi\cdot\nabla\in C^{\ell, 1}_{\cal{H}}\cap
C^1_\Eucl$. Fix also  $w\in \W_\ell$.
Comparing the definitions
$\ad_Z X_w := ( Z^\sharp f_w - X_w \psi)\cdot\nabla$
 and $[Z, X_w]:=( Z^\sharp f_w -
X_w^\sharp \psi)\cdot\nabla$, we immediately recognize that  (i) and (ii) are equivalent.

Next we prove that $(i)$ holds for all $\ell\in\{2,\dots,s-1\}$.  In view of Lemma \ref{lemmauno},   it suffices
to prove that
for all $w=w_1\cdots w_\ell\in\W_\ell$, we have
\begin{equation}\label{labello}
\begin{aligned}
  \lim_{t\to 0}\sum_{\s\in\perm_\ell}\frac{1}{t^\ell}
\pi_\ell(\sigma)\psi(\D^{\s_\ell(w) \cdots \s_1(w)}x)
 =\sum_{\s\in \perm_\ell}
\pi_\ell(\s) X_{\s_1(w)}\cdots X_{\s_{\ell-1}(w)}f_{\s_\ell(w)} (x)
\cdot\nabla\psi(x),
\end{aligned}
\end{equation}
for any $\psi\in C^{\ell ,1}_{\cal{H},\loc}\cap C^1_\Eucl$ and for all
$\ell=2,3,\dots, s-1$.

We first prove the statement for $\ell=2$. Fix $X_{w_1}, X_{w_2}\in\{X_1, \dots,
X_m\}$  and    $\psi  \in C^{2,1}_{\cal{H},\loc}\cap C^1_\Eucl$. We need to
show
that
\begin{equation}
 \label{giogo}
\lim_{t\to 0}\frac{1}{t^2}\sum_{\s\in \perm_2}
\pi_2(\s)\psi(\D^{\s_2(w)}\D^{\s_1(w)}x)=X_{w_1w_2} \psi(x)
\quad \text{for all $x\in
\R^n$},
\end{equation}
where $X_{w_1w_2}:= (X_{w_1}
f_{w_2} - X_{w_2} f_{w_1})  \cdot \nabla$. Observe that since trivially
$X_k\psi= X_k^\sharp \psi$ for all $\psi\in
C^1_\Eucl$ and  $k=1, \dots, m$,
 we already have
\begin{equation}\label{radio}
 \ad_{X_k}X_i = X_{ki} \quad \text{for all $k,i\in\{1, \dots, m\}$.}
\end{equation}
For each fixed $x$, let  $g(t):= \sum_\s\pi_2(\s) \psi(\D^{\s_2}\D^{\s_1}x)$.
Here we take the abridged notation $\s_i=\s_i(w)$.
We will prove \eqref{giogo} by calculating the limit in the left-hand side  with
de~l'H\^opital's rule.
\begin{equation}\label{marelli}
\begin{aligned}
 g'(t) &= \sum_{\s\in\perm_2} \pi_2(\s)
\big\{X_{\s_1}(\psi\D^{\s_2})(\D^{\s_1}x) +
X_{\s_2}\psi(\D^{\s_2}\D^{\s_1}x)\big\}
\\&=
\sum_{\s\in\perm_2} \pi_2(\s) \big\{X_{\s_1}\psi (\D^{\s_2}\D^{\s_1}x)  +
X_{\s_2}\psi(\D^{\s_2}\D^{\s_1}x)
\\&\qquad \qquad + X_{\s_2\s_1}\psi (\D^{\s_2}\D^{\s_1}x)(-t) + O_3(t^2, \psi
,\Delta^{\sigma_2\sigma_1}x)\big\}.
\end{aligned}
\end{equation}
Here we already used Theorem
\ref{seminuovo} and we also invoked  \eqref{radio} to claim that
$\ad_{X_{\s_2}}X_{\s_1}=X_{\s_2\s_1}$.
To accomplish the proof for $\ell=2$, observe first  that
\begin{equation*}
 \lim_{t\to 0}\frac{1}{2t}\sum_{\s\in\perm_2}
\pi_2(\s) \Big( X_{\s_2\s_1}\psi
(\D^{\s_2}\D^{\s_1}x)(-t) + O_3(t^2, \psi, \Delta^{\s_2\s_1}x) \Big)
=X_{w_1w_2}\psi(x).
\end{equation*}
Here we used estimate \eqref{tredieci} and the definition of $\pi_2(\sigma)$.
Therefore the last line of \eqref{marelli} has the expected behaviour. It
remains to
show that the second one behaves as $O(t^2)$, as $t\to 0$.
To prove this claim, introduce
$\phi:=X_{w_1}\psi+
X_{w_2}\psi\in C^{1,1}_{\H,\loc}$.
 The
Taylor formula \eqref{ttt} gives
\begin{equation*}
\begin{aligned}
 \phi(\D^{w_2}\D^{w_1}x) & - \phi(\D^{w_1}\D^{w_2} x)
= \phi(x) + (X_{w_2}\phi(x)+X_{w_1}\phi(x))t + R_2(t^2, \phi,x)
\\&\qquad  -\big\{\phi(x) + (X_{w_1}\phi(x)+X_{w_2}\phi(x))t + R_2(t^2, \phi
,x)
\big\}=R_3(t^2, \psi,x),
\end{aligned}
\end{equation*}
as $t\to 0$.
Therefore,
\begin{equation*}
\begin{aligned}
 \lim_{t\to 0} 
\frac{1}{t}\sum_{\s\in\perm_2}
\pi_2(\s) \Big(X_{\s_1}\psi
(\D^{\s_2\s_1} x)  + X_{\s_2}\psi(\D^{\s_2\s_1} x)
 \Big)& =
\lim_{t\to 0} \frac{1}{t}\sum_{\s\in\perm_2} \pi_2(\s)
\phi(\D^{\s_2\s_1} x)
=
0,
\end{aligned}
\end{equation*}
as desired.

Next we show the induction step (which  is not needed if $s\le 3$). Assume that $s\ge 4$ and that 
for some  $\ell\in\{3, \dots, s-1\}
$ we have
\begin{equation}
\label{assumo} X_v\phi = X_v^\sharp\phi \quad\text{for all $\phi\in
C^1_\Eucl\cap
C^{\ell-1,1}_{\H,\loc}$ \quad $|v|\le \ell-1$}.
\end{equation}
We want to show \eqref{labello} for all $\psi\in C^{\ell,
1}_{\cal{H},\loc}\cap
C^1_\Eucl$ and $|w|=\ell$.

Fix $x$ and let \( g(t) :=\sum_{\s\in \perm_\ell}\pi_\ell(\s)
\psi(\D^{\s_\ell}\cdots\D^{\s_1}x),\) where $\s_j = \s_j(w)$.
It suffices to show that   $\lim_{t\to
0}g(t)/t^\ell =X_w\psi(x)$. This
will follow by de l'H\^opital's rule, as soon as we prove that
\begin{equation}\label{boxee}
\lim_{t\to 0}\frac{g'(t)}{\ell t^{\ell-1}} =  X_w  \psi(x).
\end{equation}
To show \eqref{boxee}, observe first that  by  the induction
assumption we have
\begin{equation}\label{indotto}
 \ad_{X_j}X_v =  X_{jv} \qquad\text{for all $j\in\{1, \dots, m\}$ \quad $|v|\le
\ell-1$.}
\end{equation}
Now we  calculate
$g'(t)$ keeping \eqref{itero} into account.
\begin{equation*}
\begin{aligned}
 g'(t) & =
\sum_{\s\in\perm_\ell} \pi_\ell(\s)\sum_{j=1}^\ell X_{\s_j} \big(\psi
\D^{\s_\ell \cdots \s_{j+1}} \big)
\big(\D^{\s_j \cdots \s_1} x\big)
\\&=
\sum_{\s\in\perm_\ell} \pi_\ell(\s)
\sum_{j=1}^\ell
\sum_{0\le k_{j+1} +\cdots + k_\ell\le \ell -1} X_{\s_\ell^{k_\ell}  \cdots
\s_{j+1}^{k_{j+1}}
\s_j}\psi(\D^{\s_\ell\cdots \s_1}x)\frac{(-t)^{k_{j+1}+\cdots
+k_\ell}}{k_{j+1}! \cdots k_\ell !}
\\&\qquad
+ O_{\ell+1}(t^{\ell},\psi, \Delta^{\s_\ell\cdots\s_1}x).
\end{aligned}
\end{equation*}
In view of \eqref{indotto}, we can expand as in \eqref{itero} and use identity 
\(
 \ad_{X_{\s_\ell}}^{k_\ell}\cdots
\ad_{X_{\s_{j+1}}}^{k_{j+1}}
X_{\s_j} \psi
=  X_{\s_\ell^{k_\ell}  \cdots
\s_{j+1}^{k_{j+1}}
\s_j}\psi
,\) which is legitimate because  $k_\ell+\cdots + k_{j+1}\le \ell -1$, see \eqref{indotto}.
We may rearrange as 
\begin{equation*}\begin{aligned}
&g'(t)=
\sum_{\s\in\perm_\ell}
\pi_\ell(\s)\sum_{1\le i_1\le \ell}X_{\s_{i_1}}\psi(\D^{\s_\ell\cdots\s_1}x)
\\& + \sum_{\mu=2}^\ell(-t)^{\mu-1}\sum_{p=2}^{\mu}
\;\sum_{\substack{1+b_2+\cdots + b_p=\mu  \\ b_2, \dots, b_p\geq 1}}
\frac{1}{b_2!\cdots b_p!}
\sum_{\s\in \perm_\ell} \pi_\ell(\s)
\sum_{1\le i_1<\cdots<i_p\le \ell}
X_{\s_{i_p}^{b_p}\cdots \s_{i_2}^{b_2}\s_{i_1}}\psi
(\Delta^{\s_\ell\cdots \s_1}x)
\\&
    + O_{\ell+1}(t^\ell, \psi,\Delta^{\s_\ell\cdots\s_1}x)
\\&
   =: H_1(t) + \sum_{\mu=2}^{\ell-1}(-t)^{\mu-1} H_\mu(t)
+(-t)^{\ell-1}\sum_{p=2}^{\ell-1} h_{\ell,p}(t)
+
(-t)^{\ell-1}h_{\ell,\ell}(t)
\\&
  +O_{\ell+1}(t^{\ell},\psi, \Delta^{\s_\ell\cdots
\s_1}x).
\end{aligned}
\end{equation*}
Everywhere $\s_j$ stands for $\s_j(w)$.

The proof  of \eqref{boxee} will be a consequence of the following three facts.

\step{Fact  1.} We have
\begin{equation*}
 \lim_{t\to 0}\frac{(-t)^{\ell-1} h_{\ell,\ell}(t)}{\ell t^{\ell-1}} =
\frac{(-1)^\ell}{\ell} 
h_{\ell,\ell}(0)=X_w\psi(x).
\end{equation*}

\step{Fact 2.} For any $p\in\{1, \dots, \ell-1\}$, we have
\begin{equation*}
\lim_{t\to 0}\frac{(-t)^{\ell-1} h_{\ell,p}(t)}{t^{\ell-1}}
=
(-1)^{\ell-1} h_{\ell,p} (0)=0.
\end{equation*}

\step{Fact 3.}
We have
\begin{equation}
 \label{terzo}
\lim_{t\to 0 } \sum_{\mu=1}^{\ell-1}\frac{(-t)^{\mu-1
}H_\mu(t)}{t^{\ell-1}} =0.
\end{equation}
Facts  1,2, and 3 give  easily the proof of \eqref{boxee} and of the theorem.

To check \textit{Fact 1}, just  observe that  property \eqref{tacin}
and the generalized Jacobi identity~\eqref{J2} give
\begin{equation*}
\begin{aligned}
\lim_{t\to 0}& \frac{(-t)^{\ell-1}}{\ell t^{\ell-1}}h_{\ell,\ell}(t)
= \frac{(-1)^{\ell-1}}{\ell} h_{\ell,\ell}(0)=
   \frac{(-1)^{\ell-1}}{\ell }
\sum_{\s \in\perm_\ell}\pi_\ell(\s)X_{\s_\ell\cdots\s_1}\psi (x)=
 X_w \psi(x),
\end{aligned}
\end{equation*}
as desired. Note that we used $\lim_{t\to 0}h_{\ell,\ell}(t)=h_{\ell,\ell}(0)$.

To verify  \emph{Fact   2}, note first that $\lim_{t\to 0}h_{\ell,p}(t)=h_{\ell,p}(0)$. Thus 
\begin{equation*}
\begin{aligned}
 h_{\ell,p}(0) & =
\sum_{\substack{ 1+b_2+\cdots + b_p= \ell
                                 \\  b_2, \dots, b_p\ge 1}}
\frac{1}{b_2!\cdots b_p!}
\Big\{ \sum_{\s\in\perm_\ell}\pi_\ell(\s)
\sum_{i\le i_1<\cdots < i_p\le \ell}X_{\s_{i_p}^{b_p}\cdots \s_{i_2}^{b_2}\s_{i_1}}
\psi(x)\Big\}
\\&
=  \sum_{\substack{ 1+b_2+\cdots + b_p= \ell
                                 \\  b_2, \dots, b_p\ge 1}}
\frac{1}{b_2!\cdots b_p!} 0,
\end{aligned}
\end{equation*}
because for any  $p\le \ell-1$ and $b_2,\dots, b_p\ge 1$,  the term
$\{\cdots\}$ vanishes by Proposition \ref{postace} below.

 Finally we discuss \emph{Fact 3}. Here it does not suffice to know that $H_\mu(t)\to
H_\mu(0)$, as $t\to 0$. We need instead a more refined expansion, whose explicit analysis 
is of considerable algebraic difficulty. 
Therefore, we use a slightly  more implicit
argument. First of all  we expand all  the terms  by means of  the  Taylor formula,
taking into account that
$\psi\in
C^{\ell,1}_{\cal{H},\loc}$. By inductive assumption  we may claim that   $
X_{\s_{i_p}^{b_p}\dots
\s_{i_2}^{b_2}\s_{i_1}} \psi =X_{\s_{i_p}^{b_p}\dots
\s_{i_2}^{b_2}\s_{i_1}}^\sharp\psi$. Thus we  express the latter as a sum of
horizontal derivatives of order  $ \mu$ with suitable coefficients. This gives
for $\mu\in\{2, \dots,
,\ell-1\}$,
\begin{equation*}
\begin{aligned}
 H_\mu(t) & = \sum_{p=1}^\mu \sum_{\substack{1+b_2+\cdots + b_p = \mu
\\ b_2,\dots,b_p\ge 1}}
\frac{1}{b_2!\cdots b_p!}\sum_\s\pi_\ell(\s)\sum_{1\le i_1<\cdots < i_p\le \ell}
X_{\s_{i_p}^{b_p}\dots \s_{i_2}^{b_2}\s_{i_1}}^\sharp \psi
(\D^{\s_\ell\cdots\s_1}x)
\\&= : \sum_{p,b,\s,i}\; \sum_{(k_1, \dots, k_\mu)\in \{w_1, \dots, w_\ell
\}^\mu }
c_{p,b,\s,i}^k X_{k_1 }^\sharp \cdots
X_{k_\mu}^\sharp \psi(\D^{\s_\ell\cdots\s_1}x)
\\&=\sum_{|\a|=0}^{\ell-\mu}
\;\sum_{p,b,\s,i}\;
\sum_{(k_1, \dots, k_\mu)\in \{w_1, \dots,
w_\ell\}^\mu }
\frac{c_{p,b,\s,i}^k}{\a
!}t^{|\a|}
(X_{\s_1}^\sharp)^{\a_1}\cdots (X_{\s_\ell}^\sharp)^{\a_\ell}
 X_{k_1}^\sharp \cdots X_{k_{\mu}}^\sharp \psi(x)
\\& \qquad
\qquad\qquad   + R_{\ell+1}(t^{\ell+1-\mu},
\psi),
\end{aligned}
\end{equation*}
where we also used the Taylor expansion. A similar expansion holds for  
$\mu=1$. 
Algebra of such coefficients is
quite complicated,  and it seems rather difficult to show \emph{Fact 3}
directly. We are instead able to prove what we need indirectly.
What we actually have is  a polynomial expansion of the form
\begin{equation}\label{abcd}
\begin{aligned}
& \sum_{\mu=1}^{\ell-1}(-t)^{\mu-1} H_\mu(t)
=\sum_{\lambda=1}^\ell t^{\lambda-1} P_\lambda
(X_{w_1}^\sharp, \dots, X_{w_\ell}^\sharp)
\psi(x) + R_{\ell+1}(t^\ell, \psi,x),
 \end{aligned}
\end{equation}
where $P_\lambda$ is an homogeneous polynomial of degree $\lambda$ involving
the coefficents $c_{p,b,\s,i}^k/\a!$ above.
Taking  \emph{Fact~1} and \emph{Fact~2}  for granted, this gives
\begin{equation}\begin{aligned}\label{ajaccio}
X_w^\sharp\psi(x)& = \lim_{t\to
0}\frac{g(t)}{t^\ell}\stackrel{\text{(H)}}{=}
\lim_{t\to 0}\frac{g'(t)}{\ell t^{\ell-1}}
\\&= X_w\psi(x) +\lim_{t\to 0}
\frac{1}{\ell t^{\ell-1}}\Big(\sum_{\mu=1}^{\ell-1}(-t)^{\mu-1} H_\mu(t) +
R_{\ell+1}(t^\ell, \psi)\Big)
\\&= X_w\psi(x)
  + \lim_{t\to 0}
\frac{1}{\ell t^{\ell-1}}\sum_{\lambda=1}^\ell t^{\lambda-1}
P_\lambda(X_{w_1}^\sharp, \dots, X_{w_\ell}^\sharp)\psi(x) 
\end{aligned}\end{equation}
  Equality $\stackrel{\mathrm{(H)}}{=}$ should be iterpreted in the usual
conditional sense provided by de~l'H\^opital's rule (the limit in the left-hand
side exists
and takes a value $L$ if the limit in the right-hand side
exists and takes the same  value $L$).
We do not know at this stage the value of the limit in the right-hand side. Our purpose is to show that 
it vanishes.

To prove such claim, note that equality \eqref{ajaccio} 
has an algebraic feature. Namely, all the coefficients
$c^k_{p,b,\s,i}$  appearing implicitely in the polynomials $P_\lambda$ do not
change if we take different vector fields $Z_j$ instead ov $X_{w_j}$ 
in some $\R^N$ with possibly
$N\neq n$, provided that we do not change the
number $\ell$ of vector fields.

If we choose analytic vector fields $Z_j$ in $\R^N$, we clearly have $Z_w\psi
 = Z_w^\sharp\psi$ for all $w$ and for any $\psi\in C^\omega$.
Moreover, the  conditional equality \eqref{ajaccio} becomes a true equality, because all
functions depend
analytically on $t$  and $x$.
Therefore we have  found a  family  of polynomial identities of the form
\begin{equation*}
0=P_\lambda(Z_1, \dots, Z_\ell)\psi(x) =: \sum_{(k_1,\dots,
k_\lambda)\in\{1,\dots,\ell \}^\lambda}
C(k_1, \dots, k_\lambda) Z_{k_1} \cdots Z_{k_\lambda}\psi(x)
\end{equation*}
which holds for any   family  $Z_1, \dots, Z_\ell$ of analytic  vector fields
in
$\R^N$, for each $N\in\N$, for all  analytic $\psi:\R^N\to \R$ and any
$x\in\R^N$.  Theorem \ref{amiz} shows that the polynomial should be  trivial, i.e.
$C(k_1, \dots, k_\lambda) = 0$ for all $(k_1, \dots, k_\lambda)$. This
concludes the proof of \emph{Fact 3} and of the theorem.
\end{proof}

Next we state and prove the relevant results needed to accomplish the proof of
\emph{Fact 2} and \emph{Fact 3}, that we took for granted in the argument above.

The  following family of nested commutators identities is relevant for the proof
of \emph{Fact~2}.
\begin{proposition}\label{postace}
 Let $X_1, \dots, X_m$ be   vector fields in the regularity class
$C^{s-1,1}_{\H,\loc}\cap C^1_\Eucl$. For any
$\ell\in\{2,\dots,s\}$
and  $1\le p\le \ell-1$,  we have the following statement:
\begin{equation*}\tag{$F_{\ell, p}$}\label{dido}
\begin{aligned}
 \sum_{\s\in\perm_\ell}  \pi_\ell (\s) &\sum_{1\le i_1<\cdots < i_{p}\le \ell}
X_{\s_{i_p}^{b_p}(v)\cdots \s_{i_2}^{b_2}(v)\s_{i_1}^{b_1}(v)w}=0
\quad \text{for all $ b_1,\dots, b_p\in \N\cup \{0\}$ }
\\&  |w|\geq 0 \quad \abs{v}=\ell  \quad  1\le b_1+\cdots+ b_p
\le s -\abs{w}.
\end{aligned}
\end{equation*}
\end{proposition}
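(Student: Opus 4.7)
The plan is to swap the order of summation, reduce the identity to a vanishing statement for a combinatorial coefficient attached to the weights $\pi_\ell$, and then kill that coefficient by a sign-reversing involution. Concretely, I would regroup the double sum by the ordered tuple of letters of $v$ actually entering each commutator: for every ordered $p$-tuple $(k_1,\ldots,k_p)$ of distinct elements of $\{1,\ldots,\ell\}$, the commutator $X_{v_{k_p}^{b_p}\cdots v_{k_1}^{b_1}w}$ appears in the left-hand side of $(F_{\ell,p})$ with the single coefficient
\[
\mathcal{C}(k_1,\ldots,k_p):=\sum_{\substack{\sigma\in\perm_\ell\\ \sigma^{-1}(k_1)<\cdots<\sigma^{-1}(k_p)}}\pi_\ell(\sigma),
\]
because the condition $\sigma(i_j)=k_j$ for some $i_1<\cdots<i_p$ is exactly the statement that $k_1,\ldots,k_p$ occur in this relative order inside the sequence $\sigma(1)\cdots\sigma(\ell)$. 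Treating the $v_j$'s as distinct free generators, it suffices to show $\mathcal{C}(k_1,\ldots,k_p)=0$ for every such tuple whenever $p<\ell$.

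Unwinding the recursive definition \eqref{16}, every $\pi_\ell$-nonzero $\sigma$ arises by building the sequence $\sigma(1)\cdots\sigma(\ell)$ iteratively: start with the single letter $\ell$, then for $j=\ell-1,\ell-2,\ldots,1$ in turn insert the letter $j$ at the front ($d_j=F$) or at the back ($d_j=B$) of the partial sequence. This yields a bijection between such $\sigma$'s and binary strings $(d_{\ell-1},\ldots,d_1)\in\{F,B\}^{\ell-1}$, with $\pi_\ell(\sigma)=(-1)^{\#\{j:d_j=B\}}$. A short inspection of the building rule shows that toggling a single $d_j$ alters only the position of letter $j$ relative to the letters $\{j+1,\ldots,\ell\}$ already present when $j$ is inserted, while leaving untouched the relative order of every pair of letters both distinct from $j$.

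Since $p<\ell$, I can always find an index $j\in\{1,\ldots,\ell-1\}$ whose toggling preserves the constraint defining $\mathcal{C}$: if $\{1,\ldots,\ell-1\}\setminus\{k_1,\ldots,k_p\}$ is non-empty, take any $j$ in this set, so $j$ is not a constraint letter; otherwise necessarily $\{k_1,\ldots,k_p\}=\{1,\ldots,\ell-1\}$ and I take $j=\ell-1$, using that the only letter greater than $j$ is $\ell\notin\{k_1,\ldots,k_p\}$. Flipping $d_j$ then yields a fixed-point-free, sign-reversing involution on the admissible $\sigma$'s, forcing $\mathcal{C}(k_1,\ldots,k_p)=0$. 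The sub-case in which some $b_j$ vanish reduces to the non-degenerate one by regrouping the subsets $I$ according to the positions with $b_j>0$. The step I expect to require the most care is the geometric claim that toggling $d_j$ preserves all pairwise relative orders not involving $j$: verifying it calls for a short case analysis on whether two given letters were both inserted before $j$, straddle the insertion of $j$, or were both inserted after $j$ in the build-up.
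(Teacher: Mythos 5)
Your proof is correct and takes a genuinely different route from the paper's. The paper proves the family $(F_{\ell,p})$ by a double induction: first $(F_{\ell,1})$ for all $\ell$, by induction on $\ell$ using the recursion \eqref{16} to produce a telescoping cancellation; then the implication $(F_{\ell,p-1})\Rightarrow(F_{\ell+1,p})$, again by splitting the sum according to whether $v_0$ is at the front or at the back, with one pair of terms cancelling outright and the other two vanishing by the inductive hypothesis. Your approach is direct and non-inductive: you regroup the double sum, isolate the scalar coefficient $\mathcal{C}(k_1,\ldots,k_p)$ attached to each ordered $p$-tuple, and kill it by a sign-reversing, fixed-point-free involution. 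The structural input you make explicit --- that the $\pi_\ell$-nonzero permutations are exactly the $2^{\ell-1}$ words obtained from the single letter $\ell$ by inserting $\ell-1,\ldots,1$ successively at the front or back, that $\pi_\ell$ is $(-1)$ to the number of back insertions, and (the step you rightly flag as delicate) that the relative order of $a<b$ in the final word is governed solely by $d_a$, so toggling $d_j$ only permutes $j$ past the larger letters --- is latent in \eqref{16} but never spelled out in the paper. This buys you transparency: the proof pinpoints exactly which cancellation makes the identity hold and disposes of all $p<\ell$ in one stroke, rather than climbing a triangle of implications. Conversely, the paper's inductive scheme fits the surrounding inductive machinery (e.g.\ the proof of Theorem~\ref{dass}) and needs no separate combinatorial analysis of $\pi_\ell$. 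One small remark: your closing sentence about reducing the case of vanishing $b_j$'s by regrouping is superfluous --- the coefficient $\mathcal{C}(k_1,\ldots,k_p)$ does not depend on the exponents $b_j$ at all, so once each of these scalars is shown to vanish, the identity holds for every admissible choice of $b_1,\ldots,b_p$ without further case distinction.
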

We agree that if  $\abs{w}=0$, then  $X_{vw}= X_v$ for any  word $v$ with
$\abs{v}\ge 1$.  To prove \emph{Fact 2} we need the
case $|w|=0$ and $b_1=1$ of
the proposition,  but the case $\abs{w}=0$ is included for convenience in the
proof. 
Observe also  that
\begin{itemize*}\item if $|w|=0$ and $b_1\ge 2$, then the statement is
trivial;
\item  if $\ell=1$, then the statement is empty;
\item  Proposition \ref{postace} fails for $\ell=p$, as
\eqref{J2}
shows.
\end{itemize*}

Since the statement of Proposition \ref{postace} is quite intricated, we first
check its correctness   in the already significant case
$\ell=3$ and $p=2$, $\abs{w}=0$ and  $b\in \N$. The general case is based on
the same cancellation mechanism. In this model case, identity $(F_{3,2})$
becomes
\begin{equation*}
 \sum_{\s\in \perm_3}\pi_3(\s)\bigl\{X_{\s_3^b\s_2}+ X_{\s_3^b\s_1} +
X_{\s_2^b\s_1} \bigr\}=0,
\end{equation*}
which can be checked by  writing explicitly the twelve terms  (in the
notation
$[j^bk]:= X_{j^b k}$):
\begin{align*}
 [3^b2] + [3^b1] + [2^b1] & - \{[2^b3] + [2^b1]+ [3^b1]\}
\\ & -\{[1^b3]+ [1^b2] +[3^b2]\} + \{[1^b2]+ [1^b3]+ [2^b3]\}=0.
\end{align*}

\begin{proof}[Proof of Propoosition \ref{postace}]
 We first prove by induction that $(F_{\ell,1})$ holds for any $\ell\in\{
2,\dots,s\}$.
Introduce the abridged notation
   $[i_1^{b_1}i_2^{b_2}]$
instead of $X_{i_1^{b_1}i_2^{b_2}}$ and so on.
For convenience of notation, we prove $F_{\ell+1,1} $ for all $\ell\in
\{1,\dots, s-1\}$. Let $\wt v = v_0 v\in \W_{\ell+1}$, $w$ and $b_1\ge 1$ be
such
that $ b_1 + \abs{w}\le s$. Then
\begin{equation*}
\begin{aligned}
\sum_{\wt\s\in\perm_{\ell+1}}\pi_{\ell}(\wt \s) \sum_{0\le i_1\le
\ell}[\wt \s_{i_1}^{b_1 }(\wt
v)w] &
=\sum_{\wt\s\in\perm_{\ell+1}} \pi_{\ell}( \wt\s)
\big( [\wt \s_0^{b_1}(\wt v)w]+  [ \wt \s_1^{b_1}(\wt v)w] + \cdots+ [
\wt\s_\ell^{b_1}(\wt v)w]\big)
\\&= \sum_{\s\in\perm_\ell}\pi_\ell(\s)\big( [v_0^{b_1}w ]
+[\s_1^{b_1}(v)w]+\cdots+[\s_\ell^{b_1}(v)w]
\\&\qquad -\big([\s_1^{b_1}(v)w] + \cdots+[\s_\ell^{b_1}(v)w]+[
v_0^{b_1}w]\big)\big)
=0,\end{aligned}
\end{equation*}
as we claimed.

To fill up the triangle,   we prove that if $(F_{\ell,p-1})$ holds for some
$\ell\in\{2,\dots,s-1\}$ and $p\in\{2,\dots,\ell\}$,
then $(F_{\ell+1,p})$ holds.
This will imply that \eqref{dido}
holds for all the required couples $(p,\ell)$.
We argue as usual by the defining   property  \eqref{16}. Denote below $\wt v =
v_0 v\in \W_{\ell+1}$.
\begin{align*}
& \sum_{\wt \s\in\perm_{\ell+1}}\pi_{\ell+1}(\wt\s) \sum_{0\le i_1<\cdots<i_p\le
\ell}[\wt\s_{i_p}^{b_p}(\wt v)\cdots\wt\s_{i_1}^{b_1} (\wt v)w]
\\&=
\sum_{\s\in\perm_\ell} \pi_\ell(\s)\Big(\sum_{0\le i_1<\cdots<i_p\le
\ell}[\wt\s_{i_p}^{b_p}(\wt v)\cdots\wt\s_{i_1}^{b_1}(\wt v)w]\Big)\Big|_{\wt
\s(v_0 v) = v_0\s(v) }
\\&\qquad - \sum \pi_\ell(\s)\Big(\sum_{0\le i_1<\cdots<i_p\le \ell}
[\wt\s_{i_p}^{b_p}(\wt v)\cdots\wt\s_{i_1}^{b_1}(\wt v)w]
\Big)\Big|_{\wt \s(v_0 v) = \s(v) v_0}
\\&= \sum_{\s\in\perm_{\ell}} \pi_\ell(\s)
        \Big(\sum_{1\le i_1<\cdots<i_p \le \ell}
\hspace{-2ex}[\wt\s_{i_p}^{b_p}(\wt v)\cdots
\wt\s_{i_1}^{b_1}(\wt v)w]
+  \sum_{\substack{1\le i_2<\cdots< i_p\le \ell\\i_1=0}}
\hspace{-2 ex}[\wt\s_{i_p}^{b_p}(\wt
v)\cdots
\wt\s_{i_1}^{b_1}(\wt v)w]
\Big)\Big|_{\wt\s(\wt  v) = v_0\s(v)}
\\&  - 
 \sum_{\s\in\perm_\ell} \pi_\ell(\s)
        \Big(
\hspace{-2ex}
\sum_{0\le i_1<\cdots<i_p\le \ell-1}
\hspace{-2ex}
[\wt\s_{i_p}^{b_p}(\wt v)\cdots
\wt\s_{i_1}^{b_1}(\wt v)w]
+\hspace{-3ex}
\sum_{\substack{0\le i_1<\cdots< i_{p-1}\le \ell-1
\\  i_p=\ell}}
\hspace{-3ex}
[\wt\s_{i_p}^{b_p}(\wt v)\cdots \wt\s_{i_1}^{b_1}(\wt v)w]
\Big)\Big|_{\wt\s( \wt v) = \s(v) v_0}
\\&=
\sum_{\s\in\perm_\ell} \pi_\ell(\s)
\Big\{\sum_{1\le i_1<\cdots<i_p\le \ell}
\hspace{-2ex}
[\s_{i_p}^{b_p}(v)\cdots\s_{i_1}^{b_1}(v)w]
+\sum_{1\le i_2<\cdots<i_p\le\ell}
[\s_{i_p}^{b_p}(v)\cdots\s_{i_2}^{b_2}(v) v_0^{b_1} w]\Big\}
\\& - \Big\{\sum_{0\le i_1<\cdots<i_p\le\ell-1}
\hspace{-2ex}            
[ \s^{b_p}_{i_{p}+1}(v) \cdots\s^{b_1}_{i_{1}+1}(v)w ]+
\hspace{-2ex}
\sum_{0\le
i_1<\cdots< i_{p-1} \le \ell-1} 
\hspace{-2ex}
[v_0^{b_p} \s^{b_{p-1}}_{i_{p-1}+1}(v)
\cdots \s_{i_1+1}^{b_1}(v)w] \Big\}
\\&=0,
\end{align*}
because the first and the third term cancel, while both
the second and the fourth
 vanish by   inductive assumption.
\end{proof}

The following theorem has been used to check \emph{Fact 3} in the
proof
of Theorem~\ref{dass}. See~\eqref{terzo}.
\begin{theorem}\label{amiz}
 Let $m$ and $p$ be natural numbers. Let $C:\{1,\dots, m\}^p\to \R$ be given
coefficients. Consider the  polynomial
\begin{equation}
\label{cococo}
P(X_1, \dots, X_m):=  \sum_{(k_1, \dots, k_p)\in\{1,\dots, m \}^p}C(k_1, \dots,
k_p)X_{k_1}X_{k_2}\cdots X_{k_p}.
\end{equation}
Assume that for all $N\in\N$, 
for any $X_1, \dots, X_m$ analytic vector fields
 in $\R^N$ and  for each analytic  $\psi:\R^N\to \R$ we have
\begin{equation}
\label{abcabc}
 P(X_1, \dots, X_m)\psi(x)=0\quad\text{for all $x\in\R^N$.}
\end{equation}
Then $P$ is the trivial polynomial, i.e.  \(C(k_1, \dots, k_p)=0\)
 for all $ (k_1,
\dots, k_p)\in \{1,\dots, m \}^p.$
\end{theorem}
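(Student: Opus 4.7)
The plan is to construct, for each target tuple $K = (k_1,\dots,k_p) \in \{1,\dots,m\}^p$, a specific choice of analytic vector fields $X_1,\dots,X_m$ on some $\R^N$ and a test function $\psi$ such that evaluating $P(X_1,\dots,X_m)\psi$ at a point produces precisely the single coefficient $C(K)$. Then the hypothesis \eqref{abcabc} forces $C(K)=0$, and since $K$ is arbitrary all coefficients vanish.

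Concretely, I would take $N:=p$ with coordinates $(t_1,\dots,t_p)$ on $\R^p$, set $t_0:=1$ by convention, and for each $j\in\{1,\dots,m\}$ define the polynomial (hence analytic) vector field
\[
X_j \;:=\; \sum_{i=1}^p \delta_{j,k_i}\, t_{i-1}\, \partial_{t_i},
\]
together with the test function $\psi(t):=t_p$. The basic identity $X_j t_i = \delta_{j,k_i}\, t_{i-1}$, combined with a short induction peeling off factors from the right, yields
\[
X_{j_1}X_{j_2}\cdots X_{j_p}\psi \;\equiv\; \delta_{j_1,k_1}\delta_{j_2,k_2}\cdots\delta_{j_p,k_p}
\]
as a constant function on $\R^p$, for every tuple $J=(j_1,\dots,j_p)\in\{1,\dots,m\}^p$. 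Inserting this into \eqref{cococo} collapses the hypothesis $P(X_1,\dots,X_m)\psi\equiv 0$ to $C(k_1,\dots,k_p)=0$, which is what we want.

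The only conceptual obstacle is the need to break the symmetry between different orderings of the same multiset of indices. Commuting vector fields (for instance $X_j=\partial_{t_j}$ on $\R^m$) applied to any analytic $\psi$ can only produce symmetrized relations such as $\sum_{\sigma\in\perm_p}C(k_{\sigma(1)},\dots,k_{\sigma(p)})=0$, which is far too weak. The triangular form of the $X_j$ above supplies the required noncommutativity: each $X_j$ acts as a nilpotent ``shift'' that removes one letter of the target word $K$, reading from right to left, so that the only ordered sequence of $p$ applications leaving a nonzero constant is $J=K$ itself.
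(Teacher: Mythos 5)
Your proof is correct, and it takes a genuinely different route from the paper's.

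The paper follows the classical Amitsur--Levitzki pattern: it first separates $P$ into parts that are homogeneous in each variable (by rescaling $X_j\mapsto t_j X_j$), then multilinearizes each such part by polarization until each variable appears to degree one, and only then plugs in the ``staircase'' vector fields $X_j = x_j\partial_{j+1}$ with $\psi = x_{p+1}$ to isolate one coefficient at a time. Your construction achieves the isolation in a single step. Fixing $K=(k_1,\dots,k_p)$ and setting
\[
X_j:=\sum_{i=1}^p \delta_{j,k_i}\,t_{i-1}\,\partial_{t_i},\qquad \psi=t_p,\qquad t_0\equiv 1,
\]
on $\R^p$, the induction from the right is safe because at each stage one differentiates a monomial of the form $c\,t_{p-q}$ with $c$ a product of Kronecker deltas (a constant), so no Leibniz cross-terms appear and $X_{j_1}\cdots X_{j_p}\psi=\delta_{j_1,k_1}\cdots\delta_{j_p,k_p}$ holds identically. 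Feeding this into the hypothesis $P(X_1,\dots,X_m)\psi\equiv0$ yields $C(K)=0$ directly, with no homogenization or polarization. In effect, you generalize the paper's staircase so that $X_j$ is a sum of shift terms, one for each position where the letter $j$ occurs in the target word $K$; this is exactly what makes the reduction to the multilinear case unnecessary. The paper's longer route has the advantage of exhibiting the connection to polynomial identity theory and to the Amitsur--Levitzki argument (which the authors explicitly cite as inspiration), but as a proof of the stated theorem your version is shorter and entirely elementary, and it is correct.
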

\begin{proof} The argument is inspired to some ideas contained in the proof the
Amitsur--Levitzki theorem \cite{Levitzki50,Amitsur50}.
  We start by separating homogeneous parts in each variable.
Let $N\in\N$ and take $X_1, \dots, X_m$  analytic vector
fields in $\R^N$ and $\psi$ analytic in $\R^N$.
 Consider the function
 \begin{equation*}
\begin{aligned}
  f(t_1, \dots, t_m): =& P(t_1 X_1, \dots, t_m X_m)\psi(x)
\\= :&
\sum_{q=1}^{\min\{p,m\}} \sum_{1\le i_1<\cdots<i_q\le m} \sum_{\substack{d_{ 1},
\dots, d_{ q}\ge 1 \\ d_{ 1}+\cdots + d_{ q}=p}}
t_{i_1}^{d_{ 1}}\cdots t_{i_q}^{d_{ q}} P^{i_1\cdots i_q}_{d_1\cdots
d_q}(X_{i_1}, \dots, X_{i_q})\psi(x),
  \end{aligned}
\end{equation*}
 where $x$ is  fixed. \footnote{An informal example to understand quickly this
splitting could be:
\begin{align*}
P(X_1, X_2, X_3)&= X_1^2 X_2^2 + X_3^4 + (X_1X_2X_3 X_1 + X_1^2 X_3
X_2)
\\&
=P_{2,2}^{1,2}(X_1, X_2) +  P_{4}^{3}( X_3)+
P^{1,2,3}_{2,1,1}(X_1, X_2, X_3).
\end{align*}
}
The function $f$
should vanish identically in $t_1, \dots, t_m$.
Therefore
it is clear that it must be for each fixed $q, i_1, \dots, i_q, d_1, \dots, d_q$
\begin{equation*}
 P^{i_1\cdots i_q}_{d_1\cdots d_q}(X_{i_1}, \dots, X_{i_q}) \psi(x)=0
\quad\text{for all $X_{i_1},\dots, X_{i_q},\psi \in C^\omega(\R^N)$\quad
$N\in \N$\quad $x\in \R^N$.}
\end{equation*}
In other words we can work with homogeneous polynomials in each variable.
Renaming variables, it
suffices to prove the theorem for a polynomial $P$  in $q$ variables, where
$1\le q\le p$ and such that
\begin{equation*}
 P(\la_1 X_1,
 \dots, \la_q X_q) =\la_1^{d_1}\cdots\la_q^{d_q} P(X_1,\dots ,X_q)
\qquad\text{for all $\lambda_1,\dots, \lambda_q\in\R$,}
\end{equation*}
where $d_1, \dots, d_q\ge 1$.

Next we show by a standard multilinearization argument that, possibly adding
new
variables, we can assume that $d_j=1$ for
all $j=1, \dots, q$. Indeed,  assume that $d_1\ge 2$.
Define
\begin{equation*}
 \wt P(U, T, X_2, \dots, X_q ):= P(U+ T, X_2, \dots, X_q )- P(U,  X_2, \dots,
X_q )
-P(T, X_2, \dots, X_q ).
\end{equation*}
It turns out that $\wt P$ is a homogeneous polynomial in $q+1$ variables, but
the degrees in the new variables $U$ and $T$ are both strictly less that the
original degree $d_1$.  Note that 
if $P(X_1, X_2, \dots, X_q)\psi\equiv 0$ for all $\psi, X_1, \dots, X_q\in C^\omega$, then
$\wt P(U, T, X_2, \dots, X_q)\psi\equiv 0$ for all $\psi,U, T, X_2,\dots, X_q\in C^\omega$.
On the other side,  if $\wt P$ is the  trivial polynomial (all its coefficients vanish),
 then also the polynomial $P$  must be trivial. Clearly, the polynomial $\wt P$ can be 
decomposed in a sum of 
homogeneous polynomials, where each of them is homogeneous in each variable separately, as above.

Iterating this argument we may assume that we  have a polynomial of the form
\begin{equation*}
Q(X_1, \dots, X_p )=\sum_{\s\in\perm_p}  B(\s) X_{\s_1}\cdots
X_{\s_p}
\end{equation*}
in $p$ variables, where $p$ is the original degree of the polynomial $P$
in \eqref{cococo}. Here $1 2\cdots p\mapsto\s_1\s_2\cdots\s_p$
are permutations and $\s_j = \s_j(12\cdots p)$.
We know that
\begin{equation}\label{zeropi}
 Q(X_1,  \dots, X_p )\psi(x)=0\quad\text{for all  $X_1,\dots,
X_p,\psi\in
C^\omega \quad N\in \N\quad x\in \R^N$}
\end{equation}
and we want to show that $B(\s)=0$ for all $\sigma$. 
Since we are free to increase the dimension $N$ of the underlying
space,  take $N\ge p+1$, let $X_j = x_j\p_{j+1}$ for any $j=1, \dots,
p$. Therefore, it
turns out that, if we let $\psi(x)= x_{p+1}$, we have
\begin{equation*}
 X_{\s_1}\cdots X_{\s_p} \psi = \begin{cases}
                            1 & \text{if $\s_1\cdots\s_p = 1\cdots p$;}
\\ 0 & \text{if $\s_1\cdots\s_p \neq 1\cdots p$.}
                           \end{cases}
\end{equation*}
Therefore, if we  make use of \eqref{zeropi}, we discover that   it must be
$B(1,2,\dots, p)=0$. Letting then $X_{\s_j} = x_{ j}\p_{
j+1}$ we see that $B(\s)=0$ for all $\s\in\perm_p$. Therefore $Q$ is the trivial
polynomial and   the proof is concluded.
\end{proof}

\subsection{An old nested commutators identity due to Baker}\label{chetbaker}
Here we show as an application that some very old nested commutator identities
going back to Baker (see the discussion in \cite{Oteo}) can be found as
a particular case
of our Proposition \ref{genero}.
All vector fields in this subsection are smooth.

Let $v = v_1
\cdots v_\ell  $ be a word of length $\ell$ in the
alphabet $1,\cdots ,m$. Let us  adopt the notation
\begin{equation*}
 X_{a }^k X_b:=
\begin{cases}
X_a X_b&\text{if $k=1$}
\\ X_b X_a &\text{if $k=-1$}                \end{cases}\text{\quad for all
$a,b\in \{1,\dots, m\}$,}
\end{equation*}
\begin{equation*}
 X_{a }^k X_b^h X_c:=
\begin{cases}
X_a X_b^h X_c&\text{if $k=1$ and $h\in\{-1,1\}$}
\\ X_b^h X_c X_a &\text{if $k=-1$ and $h\in\{- 1,1\}$}
\end{cases}\text{\quad for all
$a,b,c\in \{1,\dots, m\}$}
\end{equation*}
and analogous notation for higher order derivatives. Then  it is rather easy
to check that we may write for all $v= v_1\cdots v_\ell$
\begin{equation}\label{alticcio}
 X_{v_1\cdots v_\ell} = \sum_{k_1, \dots ,
k_{\ell-1}\in\{-1,1\}}(-1)^{k_1+\cdots +
k_{\ell-1}}X_{v_1}^{k_1}\cdots X_{v_{\ell-1}}^{k_{\ell-1}}X_{v_\ell}.
\end{equation}
This is an alternative way to write commutators, less focused on the inductive
point of view than the form \eqref{commototo}. Let now  $n\in\N$, $v\in
\W_{n+1}$ and
$w\in \W_1$. Thus,
\begin{align*}
 X_{wv}+\sum_{\sigma\in
\perm_{n+1}}\pi_{n+1}(\sigma)X_{\s_1(v)\cdots\s_{n+1}(v)w} =
 -X_{[v]w}+\sum_{\sigma\in
\perm_{n+1}}\pi_{n+1}(\sigma)X_{\s_1(v)\cdots\s_{n+1}(v)w} =0,
\end{align*}
by \eqref{otto}. Comparing   \eqref{alticcio} and \eqref{commototo},  this is equivalent to
\begin{equation*}
 X_{wv_1\cdots v_n v_{n+1}} + \sum_{k_1, \dots, k_n\in\{-1,1\}}(-1)^{k_1+\cdots
+
k_n}X_{v_1^{k_1}\cdots v_n^{k_n} v_{n+1},w}=0,
\end{equation*}
or in the typographically better, self-explanatory notation
\begin{equation}\label{giochetto}
[wv_1\cdots v_n v_{n+1}] + \sum_{k_1, \dots, k_n\in \{-1,1\}}(-1)^{k_1+\cdots +
k_n}[v_1^{k_1}\cdots v_n^{k_n} v_{n+1},w]=0.
\end{equation}
Note that  we introduced a comma before $w$ to avoid confusion. Namely,
when some of the $v_j$ has power $-1$, then it goes on the right side of the
previous $v_{j+1}, \dots, v_{n}, v_{n+1}$ but \emph{not} of $w$. For instance,
we have
$[v_1^{-1}v_2^{-1}v_3 v_4,w]:=[v_3v_4v_2 v_1 w] $ and so on (a precise
definition can be given by induction).

Now we show that the following Baker's identity
of order six
\begin{equation}\label{panettiere}
[ab^4 a]-2[bab^3a]+ [b^2 ab^2 a]=0 \quad\text{for all $a,b\in \{1, \dots, m\}$,}
\end{equation}
 see \cite[eq.~(4.4)]{Oteo}, can be easily obtained specializing
\eqref{giochetto}.
Let $n=4$ and choose $v_1\cdots v_n v_{n+1} = v_1\cdots v_4 v_5 = b\cdots b
a= b^4 a$ and $w=a$. Thus \eqref{giochetto} becomes
\begin{equation*}
 [a b^4 a]+ \sum_{k_1, \dots, k_4\in \{-1 ,1\}}(-1)^{k_1+\cdots + k_4}
[b^{k_1}\cdots b^{k_4}a, a]=0.
\end{equation*}
Note that at least one among the numbers $k_j$ must be $-1$, otherwise we get
$[b^4 aa]=0$. Therefore we get
\begin{equation*}
 [a b^4 a]+ \Bigl\{- \binom{4}{1} [b^3 ab a]+\binom{4}{2}[b^2 ab^2 a]-
\binom{4}{3}[bab^3 a]+\binom{4}{4}[ab^4a]\Bigr\}=0
\end{equation*}
which gives
$
 [ab^4 a]-2[b^3 aba]+3 [b^2 ab^2 a]-2[bab^3 a]=0.
$
But the fourth order identity \eqref{oteo} gives  $[b^3aba  ]= [b^2 ab^2 a]$.
Thus \eqref{panettiere} follows.

\section{Applications to ball-box theorems}\label{cinquemila} 
In this section we describe some applications of our results to ball-box theorems. 
We shall use some results from \cite{MontanariMorbidelli11d} 
  on the expansion of almost exponential maps.

      Assume that a family $\H$  of vector
 fields belongs to the regularity class $C^{s-1,1}_{\H,\loc}\cap C^1_\Eucl$
 and assume that the vector fields satisfy the H\"ormander condition of step $s$, namely 
$
      \dim \Span\{X_w(x)\colon 1\le\abs{w}\le s\}=n
$, at any $x\in\R^n$.  
Following a standard notation, denote by
$\cal{P}   := \{ Y_1, \dots, Y_q\} = \{ X_w: 1\le \abs{w} \le s\}$
the family of commutators of length at most $s$.  Let
$  \ell_j\le s$  be the length of $Y_j$ and
write $Y_j=:g_j\cdot\nabla$.
For each $I=(i_1, \dots, i_n) \in\{1,\dots,q\}^n$, let $\ell(I)= \ell_{i_1}+\cdots+\ell_{i_n}$
$
\lambda_I(x):= \det[Y_{i_1}(x),\dots, Y_{i_n}(x)]$ and
$ \ell(I):= \ell_{i_1}+\cdots+ \ell_{i_n}.$ 
Define also the vector valued function
\( 
      \Lambda(x,r):= (\lambda_I(x)r^{\ell(I)})_{I\in\{1,\dots,q\}^n}.
\) 
Finally, for all  $A\subset\R^n$, put
\begin{equation}
\label{nu}
 \nu(A) : = \inf_{x\in  A }  \abs{
\Lambda(x,1)}. \end{equation}

Assume that each  commutator $Y_j$  is continuous in the Euclidean topology. Then, on the open set
$\Omega_0\subset\R^n$ fixed before \eqref{lipo},  we have
\(\nu(\Omega_0)>0.\) 
Moreover, take  $j\in\{1,\dots, m\}$ and  any word $w$ with $\abs{w}=s$. For any $x\in\Omega_0$
 where the 
derivative $X_j^\sharp f_w(x)$ exists,  we have the  obvious bound $\abs{X_j^\sharp f_w(x)}\le L_0$, 
the constant in \eqref{lipo}. Furthermore we also have     $\abs{X_w f_j(x)} \le L_0$
for all $x$.
Therefore we can write 
\begin{align}\label{arte}
 \ad_{X_j} X_w (x) & = \sum_{1\le \abs u\le s}b^u X_{u}(x)\quad\text{where}
\\ \label{artefatto}  \abs{b^u } & \le C_0 \qquad\text{for all $u$ with $1\le
\abs{u}\le s$. }
\end{align}
Here the  constant $C_0$ can be estimated  in terms of the constant $L_0$ in \eqref{lipo}
 and of the infimum  $\nu(\Omega _0)$; see \cite[Lemma~4.2]{MM}. 

Therefore,
the vector fields are in the class $\A_s$ introduced in 
 in \cite{MontanariMorbidelli11d} 
(actually in a subclass, because here we assume the H\"ormander condition, while in  
\cite{MontanariMorbidelli11d}  we did not).
Moreover we have the following measurability property:

\begin{proposition}[measurability] \label{arcano} Let $\H$ be a family  of vector fields
in the regularity class $C^1_\Eucl\cap C^{s-1, 1}_{\H, \loc}$. Assume the H\"ormander condition at step $s$
and assume that   $f_w\in C^0_\Eucl$, if  $1\le \abs{w}\le s$.
 Let $w$ be a word with $\abs{w} =s$ and let
$Z=f\cdot\nabla \in \pm\H$.
 Then for any $x\in \Omega$ we can write
 \begin{equation}
 \ad_{Z} X_w(e^{t Z}x) = \sum_{1\le\abs{v}\le s} b^v(t)
X_v(e^{t  Z}x) \quad \text{for a.e. $t\in (-t_0, t_0)$,}
\end{equation}
where the functions $t\mapsto b^v(t)$ are \emph{measurable} and $\abs{b^v(t)}\le
C_0$, the constant in \eqref{artefatto}.
\end{proposition}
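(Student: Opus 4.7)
The pointwise decomposition~\eqref{arte} together with the bound~\eqref{artefatto} already produces, at every $y\in\Omega_0$ where $\ad_Z X_w(y)$ is defined, scalars $b^v$ with $|b^v|\le C_0$ satisfying $\ad_Z X_w(y)=\sum_{1\le|v|\le s} b^v X_v(y)$. By Theorem~\ref{seminuovo}-(b), the set of $t\in(-t_0,t_0)$ for which $\ad_Z X_w(e^{tZ}x)$ is defined has full measure. The only genuine issue is therefore to select the coefficients $b^v(t)$ as \emph{measurable} functions of $t$. My plan is: first check that $t\mapsto\ad_Z X_w(e^{tZ}x)\in\R^n$ is itself measurable; then partition $(-t_0,t_0)$ measurably according to a canonical choice of $n$ linearly independent commutators at $e^{tZ}x$; finally apply Cramer's rule piecewise.

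For the measurability of the left-hand side I will use the identity $\ad_Z X_w=(Z^\sharp f_w-f_w\cdot\nabla f)\cdot\nabla$, where $Z=f\cdot\nabla$. The second addend, evaluated along the flow $t\mapsto e^{tZ}x$, is continuous in $t$ since $f_w\in C^0_\Eucl$ and $f\in C^1_\Eucl$. The first addend equals $\frac{d}{dt}f_w(e^{tZ}x)$ wherever this derivative exists; the argument already used in the proof of Theorem~\ref{seminuovo}-(b) shows that $t\mapsto f_w(e^{tZ}x)$ is Euclidean Lipschitz, hence differentiable a.e., and the a.e.~derivative of a Lipschitz function is measurable. Consequently $t\mapsto\ad_Z X_w(e^{tZ}x)$ is measurable on $(-t_0,t_0)$.

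For the basis selection I will set, for each $n$-tuple $I=(i_1,\dots,i_n)\in\{1,\dots,q\}^n$, the continuous function $\Delta_I(t):=\det[X_{i_1}(e^{tZ}x),\dots,X_{i_n}(e^{tZ}x)]$ and $\Delta^*(t):=\max_I|\Delta_I(t)|$; the H\"ormander condition together with continuity of the $X_v$ and of the flow gives $\Delta^*(t)\ge\nu(\Omega_0)>0$ on $(-t_0,t_0)$. Fixing a lexicographic ordering of the finitely many multiindices, I let $I^*(t)$ be the smallest $I$ with $|\Delta_I(t)|\ge\Delta^*(t)/2$. Then $I^*$ is Borel measurable and finite-valued, so the sets $E_I:=\{t:I^*(t)=I\}$ form a finite measurable partition of $(-t_0,t_0)$. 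On each $E_I$ the family $\{X_{i_k}(e^{tZ}x)\}_{k=1}^n$ is a basis of $\R^n$ with $|\det|\ge\nu(\Omega_0)/2$, and Cramer's rule, applied to the a.e.~measurable vector identity $\ad_Z X_w(e^{tZ}x)=\sum_{k=1}^n c_k^I(t)X_{i_k}(e^{tZ}x)$, yields measurable coefficients $c_k^I(t)$; the uniform bound on $|X_v|$ over $\Omega_0$ and the quantitative lower bound on $|\Delta_I(t)|$ then give $|c_k^I(t)|\le C_0$ by the same linear-algebra argument as in \cite[Lemma~4.2]{MM}. Setting $b^v(t):=c_k^I(t)$ when $v=i_k$ for the selected $I$ and $b^v(t):=0$ otherwise assembles these pieces into globally measurable and uniformly bounded coefficients.

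The main obstacle is precisely this measurable selection: because $\P$ is overdetermined the decomposition is non-unique, and one must freeze a canonical representation that is simultaneously Borel in $t$ and compatible with the pointwise bound~\eqref{artefatto}. Once a basis with a quantitative lower bound on $|\det|$ has been chosen in a Borel-measurable manner, the rest reduces to elementary linear algebra combined with the a.e.~differentiability of the Lipschitz function $t\mapsto f_w(e^{tZ}x)$.
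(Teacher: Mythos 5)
Your proposal is correct and reaches the same conclusion, but it selects the coefficients by a genuinely different mechanism from the paper. For the measurability of $t\mapsto\ad_Z X_w(e^{tZ}x)$ you and the paper argue identically: Lipschitz continuity of $t\mapsto f_w(e^{tZ}x)$ gives a.e.\ differentiability with measurable derivative, and continuity of $X_w f$ handles the other term. The divergence is in the selection step. The paper takes the Moore--Penrose least-norm solution $b(t)=Y^\dag_{\gamma(t)}(\ad_Z X_w)_{\gamma(t)}$, where $Y_x=[Y_{1,x},\dots,Y_{q,x}]$, and derives measurability from the Tychonoff formula $Y^\dag=\lim_{\d\downarrow 0}(Y^TY+\d I_q)^{-1}Y^T$, a pointwise limit of continuous matrices composed with a measurable function. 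You instead carve $(-t_0,t_0)$ into finitely many Borel pieces by a lexicographic maximal-determinant rule, pick the corresponding subfamily as a basis on each piece, and apply Cramer's rule. Both are sound. Your route is more elementary and makes the role of the H\"ormander lower bound on the determinant very explicit; the paper's route is shorter, avoids the partition, and has the advantage that the least-norm solution automatically inherits the best available pointwise bound, so the constant in $\abs{b^v(t)}\le C_0$ transfers with no further work. In your version the constant produced by Cramer's rule is a dimensional multiple of $L_0/\nu(\Omega_0)$ and need not literally equal the $C_0$ of \eqref{artefatto}; one should either note that it is comparable to $C_0$ up to a factor depending only on $n$ and $q$, or define $C_0$ to absorb this. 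Also, two small notational slips: inside $\Delta_I$ the entries should be $Y_{i_k}$ rather than $X_{i_k}$ since $i_k\in\{1,\dots,q\}$, and $\nu$ is defined via the Euclidean norm of the full vector $\Lambda(\cdot,1)$, so the lower bound on $\Delta^*(t)$ is $\nu(\Omega_0)$ only up to a combinatorial constant.
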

\begin{proof}  Denote
  $\gamma(t):= e^{tZ}x$. Since $t\mapsto f_w(\gamma(t))$ is
Lipschitz and $x\mapsto X_w
f(x)$ is continuous, the function $t\mapsto \ad_Z X_w(\gamma(t)):=
\frac{d}{dt}f_w(\gamma(t))  - X_w f(\gamma(t))$ is measurable and  bounded, as observed above.  Let
for any $x$ the matrix $Y_x= [Y_{1,x},\dots, Y_{q,x}]\in \R^{n\times q}$. Then
let
$Y_x^\dag$ be the Moore--Penrose pseudoinverse of $Y_x$.  Therefore, choose
$
b(t) = Y^\dag_{\gamma(t)}(\ad_Z X_w)_{\gamma(t)}
$ at any differentiability point  $t$. Note that $b(t)$ is the least-norm
solution of the system $\sum_{j=1}^q Y_{j, \gamma(t)}\xi^j =
(\ad_ZX_w)_{\gamma(t)}$, where $\xi\in\R^q$.
The Tychonoff approximation  $Y^\dag=\lim_{\d\downarrow 0} (Y^T Y
+ \d I_q)^{-1}Y^T $ (see   the appendix)
shows measurability.
\end{proof}

\begin{remark}\label{misurella}
\begin{itemize*}
\item 
One can prove Proposition \ref{arcano} in a less elegant but  more analytic way,  without using the Moore--Penrose inverse, looking instead for ``almost least-squares'' solutions.

\item

The argument  above
can be used to see that  in the definition of subunit distance we
may work with paths $\gamma$ such that  for a.e.~$t$ we have $\dot \gamma(t) =
\sum_{j}b^j(t) X_j(\gamma(t))$, where the function $t\mapsto b(t)$ is
\emph{measurable}.
Indeed,
let $\gamma$ be a subunit path as in  the definition of $\dcc$ in
\eqref{dicacco}.
Given    a differentiability point $t$ of  $\g$, let
$
   b (t):=\lim_{\d\downarrow 0} \big(X_{\g(t)}^T X_{\g(t)}+\d
I_p\big)^{-1}X_{\g(t)}^T\dot\gamma(t),
$
where $X_{x}:=[X_{1,x}, \dots, X_{m,x}]$ for all $x$.
The function $  b $ is measurable and at any  differentiability point $t
 $ of $\gamma$, the vector $  b(t)$ is the least-norm solution of the system
$X_{\g(t)}\xi = \dot\gamma(t)$, with $\xi\in \R^m$.
  See \cite{JerisonSanchez} for a related discussion.
      
\end{itemize*}

\end{remark}

The distance associated  with $\P$ where each $Y_j$ has degree $\ell_j$ will be
denoted by $\r$:
\begin{equation}\label{coscos}
\begin{aligned}
 &\text{$\r(x,y) := \inf \big\{  r\ge 0 :   $ there is $\gamma\in
\Lip_\Eucl((0,1), \R^n)$ with $\gamma(0)=x$   }
\\& \quad\text{$\gamma(1) = y$ and $\dot\gamma(t) ={\textstyle{
\sum_{j=1}^q}} b_j r^{\ell_j}Y_j(\g(t))$ with $\abs{b}\le 1$ for a.e. $t\in
[0,1]$}\big\}.
\end{aligned}
\end{equation}

Next we recall the definition of approximate exponential.
Let $w_1, \dots, w_\ell\in \{1,\dots,m\}$.
Given $\t>0$, we define, as in \cite{NagelSteinWainger,Morbidelli98} and
\cite{MM},
\begin{equation}\label{navetta}
 \begin{aligned}
 C_\t( X_{w_1})& := \exp(\t X_{w_1}),
 \\ C_\t( X_{w_1}, X_{w_2})& :=\exp(-\t X_{w_2})\exp(-\t X_{w_1})\exp(\t
X_{w_2})\exp(\t X_{w_1}),
 \\&\vdots
  \\C_\t( X_{w_1}, \dots, X_{w_\ell})&
:=C_\t( X_{w_2}, \dots, X_{w_\ell})^{-1}\exp(-\t X_{w_1}) C_\t( X_{w_2}, \dots,
X_{w_\ell})\exp(\t X_{w_1}). \end{aligned}
 \end{equation}
Then let
\begin{equation}
\eap^{tX_{w_1 w_2\dots w_\ell}} :=  \expap(t X_{{w_1 w_2\dots w_\ell}}):=
\left\{\begin{aligned}
& C_{t^{1/\ell}}(X_{w_1}, \dots, X_{w_\ell} ), \quad &\text{ if $t\geq 0$,}
\\
&C_{|t|^{1/\ell}}(X_{w_1}, \dots, X_{w_\ell} )^{-1}, \quad &\text{ if $t<0$.}
                 \end{aligned}\right.
\label{approdo}
\end{equation}
Let $\Omega_0$ be the open bounded set fixed before \eqref{lipo}. 
By standard ODE theory,    there is $t_0$  depending on $\ell,  \Omega$,
$\Omega_0$,  $ \sup\abs{ f_j } $ and  $\sup\abs{\nabla f_j}  $     such that
$\exp_*(t X_{{w_1 w_2\dots w_\ell}})x\in\Omega_0$ for any $x\in
 \Omega$ and $|t|\le t_0$.
  Given $r>0$, define $\wt Y_j = r^{\ell_j}Y_j$ for $j=1,\dots,q$. Moreover,  
if $I= (i_1,\dots,i_n)\in\{1,\dots,q\}^n $,   $x\in \Omega$, $r\in (0,1]$ and
$h\in\R^n$ is sufficiently close to the origin,  define
     \begin{equation}
\begin{aligned} \label{hhh} E_{I,x,r}(h)& :=\expap(h_1 \wt Y_{i_1})\cdots
\expap(h_n
\wt Y_{i_n})(x)
\\
\bigl\|h\bigr\|_I & : =\max_{j=1,\dots,n}|h_j|^{1/\ell_{i_j} }\qquad   Q_I(r):
=\{h\in\R^n:\norm{h}_I < r\}.
\end{aligned}\end{equation}

Recall that, given  $\eta\in(0,1)$,  $x\in K$,  $r<r_0$ and  $I\in\{1,\dots,q\}^n$,
the triple  $(I,x,r)$ is said to be  
$\eta$-maximal if
\(
\abs{\lambda_I (x)} r^{\ell (I)} >\eta \max_{J\in \I(p_x, q)} \abs{\lambda_J(x)}r^{\ell(J)}.
\)

\begin{theorem}\label{teoremahormander} 
      Let $\H$ be a family of  vector fields 
of class $C^{s-1,1}_{\H,\loc}\cap C^1_\Eucl$  satisfying the H\"ormander condition of step $s$. Assume that all nested commutators up to length $s$ are continuous in the Euclidean sense.
Then there is $C>1$ such that the following properties hold.
Let $I\in\{1,\dots,q\}^n$, $x\in\Omega$ and $r<C^{-1}$. Let also  $E:=E_{I,x,r}$ be the map  in~\eqref{hhh}. Then
\begin{enumerate*}
\item [(a)]   $E\in C^1_\Eucl(Q_I(C^{-1}))$.
\item[(b)]\label{bibibo}  We have the expansion  
\begin{equation}\label{polacco}
\begin{aligned}\frac{\p}{\p h_k} E(h) =   \wt Y_{i_k}(E(h))  +\sum_{\ell_j=\ell_{i_k}+1}^{s}
a^j_k(h)
\wt Y_{j  }(E(h))
+ \sum_{i=1}^q  \omega_k^i(x,h)  \wt Y_{i}(E(h)).
\end{aligned}\end{equation}
where $\wt Y_k:= r^{\ell_k}Y_k$ and the functions $a_k^j$ and $\o_k^j$ satisfy
\begin{align}\label{sogliola}
       \abs{a_k^j (h)}& \le C\bigl\|h \bigr\|_I^{\ell_j-\ell_{i_k}}\quad\text{for all
$h\in Q_I(C^{-1})$}
\\ \label{merluzzo}  \abs{\omega_k^j(x,h)} &\le C \bigl\|h \bigr\|_I^{s+1-  \ell_{i_k}}
\quad\text{for all $h\in Q_I(C^{-1})\quad x\in\Omega$}.
\end{align} 
\item [(c)]
If  moreover $(I, x, r)$ is   $\frac12 $-maximal with $I\in\{1,\dots,q\}^n$, $x\in \Omega$ and  $r< r_0$,
then,       for  all  $\e\leq C^{-1}$ we have
\begin{equation}\label{moma}
 E_{I,x,r}(Q_I(\e))\supset  B_\rho (x, C^{-1} \e^s r).
\end{equation}
\end{enumerate*}
\end{theorem}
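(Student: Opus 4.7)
The plan is to carry out, in the present low-regularity framework, the differentiation-of-the-almost-exponential-map strategy used in the smooth setting (cf.~\cite{NagelSteinWainger,Morbidelli98,MM}), substituting Theorem \ref{dass} (for intermediate-length commutators) and Proposition \ref{arcano} (for top-length commutators) wherever higher Euclidean smoothness of the $f_w$ is usually invoked. Part (a) is immediate from ODE theory: each flow $e^{\tau X_j}$ with $X_j\in C^1_\Eucl$ is $C^1_\Eucl$ in $(\tau,x)$, hence so is each nested composition \eqref{navetta} defining $\expap(t X_w)$, and consequently so is $E_{I,x,r}(h)$ as a function of $h$ on a neighbourhood of the origin.

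For part (b), I would fix an index $k$, split $E(h)=\Phi_{<k}(h) \circ \expap(h_k\wt Y_{i_k}) \circ \Phi_{>k}(h)(x)$, and compute $\partial_{h_k} E$ via the chain rule. The inner factor produces a derivative of the form $\frac{d}{ds}\expap(sX_w)(y)|_{s=h_k}$ with $|w|=\ell_{i_k}$; for smooth vector fields this derivative admits the classical expansion $X_w(\expap(sX_w)y)+\sum_{\ell_{i_k}<|u|\le s}c_u\, s^{|u|-\ell_{i_k}}\,X_u(\cdot)+\text{remainder}$, obtained by iterating \eqref{lachi2} along each elementary flow in \eqref{navetta}, i.e.~by successively applying \eqref{itero}. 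In our regularity setting the same iteration survives provided one uses \eqref{lachi2} for lengths $|w|\le s-1$ (justified by Theorem \ref{dass}) and the measurable linear representation of Proposition \ref{arcano} at length $|w|=s$. Packaging the error terms as balanced remainders $O_p(\cdot)$ of \eqref{nota} and expanding them via Proposition \ref{lasposto} yields \eqref{polacco}; the size bounds \eqref{sogliola}--\eqref{merluzzo} come from the scaling \eqref{laprop} together with the fact that, in the rescaled variables, each factor $\expap(h_j\wt Y_{i_j})$ sits at time $\lesssim |h_j|^{1/\ell_{i_j}}r$ in the unscaled flow---exactly what $\norm{h}_I$ controls. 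The outer-factor contributions are incorporated by push-forward, using that $\Phi_{<k}(h)$ is $C^1_\Eucl$ and does not destroy the horizontal-regularity class of the coefficients being transported.

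The main obstacle is the top-order case $|w|=s$. Here Theorem \ref{seminuovo}(b) only delivers \eqref{arte2} almost everywhere, and the clean identity $\ad_Z X_w=[Z,X_w]$ is no longer available, since one-longer commutators are not defined in our class. Proposition \ref{arcano} circumvents this by rewriting $\ad_Z X_w$ as a bounded \emph{measurable} combination $\sum_{1\le|u|\le s}b^u(t)X_u$, and the H\"ormander assumption keeps the expansion closed within the $X_u$ of length at most $s$. This is precisely the setup analysed in the companion paper \cite{MontanariMorbidelli11d}, whose expansion results we quote to complete the proof of (b).

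For part (c), the expansion \eqref{polacco} combined with $\tfrac12$-maximality of $(I,x,r)$ yields $|\det dE(h)|\ge C^{-1}|\lambda_I(x)|\, r^{\ell(I)}$ uniformly on $Q_I(C^{-1})$, so $E$ is a $C^1_\Eucl$ local diffeomorphism with Jacobian comparable throughout the box; injectivity on a smaller box $Q_I(\e)$ then follows by the standard continuation argument of \cite[Section~4]{MM}. To obtain \eqref{moma}, I would fix $y\in B_\r(x,C^{-1}\e^s r)$, take a $\r$-subunit path $\gamma$ from $x$ to $y$, invoke Remark \ref{misurella} to write $\dot\gamma=\sum_j b^j(t)\wt Y_j(\gamma)$ with measurable bounded $b$, and lift $\gamma$ to a path in $Q_I(\e)$ through the inverse of $E$ by solving, at each time, the $n\times q$ system provided by \eqref{polacco} via the Moore--Penrose pseudoinverse---measurability of the lift being ensured by the Tychonoff-regularisation argument already used in the proof of Proposition \ref{arcano}. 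Tracking the $r$- and $\e$-homogeneities along this construction produces the radius $C^{-1}\e^s r$ in \eqref{moma}.
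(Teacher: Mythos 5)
Your proposal follows essentially the same strategy as the paper, which for this theorem delegates almost entirely to companion papers: differentiate the almost-exponential map using the commutator machinery (Theorem~\ref{dass}, Proposition~\ref{arcano}) and the $O_p$ remainders from Proposition~\ref{lasposto} to obtain (a)--(b), then combine the resulting Jacobian control with a path-lifting argument through the local inverse of $E$ to obtain (c). The paper's text for the proof is a list of pointers to \cite{MontanariMorbidelli11d}, \cite{MontanariMorbidelli11c}, \cite{MM}, \cite{Morbidelli98} and \cite{NagelSteinWainger}, so your sketch supplies more intermediate detail than the paper itself while remaining consistent with the route the paper indicates.
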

Note that constants in  Theorem  \ref{teoremahormander}  depend quantitatively on $C_0$ and $L_0$. 
 Inclusion~\eqref{moma} ensures the 
 Fefferman--Phong type estimate $d(x,y) \le  C\abs{x-y}^{1/s}$; see \cite{FeffermanPhong81}.

Moreover, we have
\begin{theorem}\label{innere}
Assume that  the hypotheses of Theorem \ref{teoremahormander} hold.
Then there is is a
constant $C>0$  such that the following holds.
  Let    $x\in\Omega\Subset\Omega_0$. Then,  for any $\frac 12$-maximal triple  $(I,x,r)$ with 
$I\in\{1,\dots,q\}^n$,
$x\in\Omega$  and $r<C^{-1}$, the map
 $E_{I,x,r}$ is one-to-one on the set $Q_I(C^{-1})$.
\end{theorem}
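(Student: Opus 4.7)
The plan is to prove injectivity by contradiction, using \eqref{polacco} to reduce the question to the invertibility of a near-identity matrix, and to exploit the $\frac{1}{2}$-maximality through Cramer's rule.

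Suppose that $h,h'\in Q_I(C^{-1})$ satisfy $E(h)=E(h')$ with $h\neq h'$; set $\gamma(t):=(1-t)h+th'$, $F(t):=E(\gamma(t))$, so that $F(1)-F(0)=0$. First I would integrate \eqref{polacco} along the segment from $0$ to $\eta$ to obtain the a priori Euclidean bound $|E(\eta)-x|\leq C' r$ uniformly for $\eta\in Q_I(C^{-1})$; in particular $F(t)$ stays close to $x$. Cramer's rule and the $\frac{1}{2}$-maximality at $x$ give the decomposition $\wt Y_j(x)=\sum_{k'=1}^n \beta_{jk'}(x)\wt Y_{i_{k'}}(x)$ with $|\beta_{jk'}(x)|\leq 2$. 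Combining this decomposition with the uniform continuity of each $Y_j$ on $\ol{\Omega_0}$ (to transfer information from $x$ to the nearby point $E(\eta)$), \eqref{polacco} rewrites as $\p_{h_k}E(\eta)=\sum_{k'=1}^n M_{k'k}(\eta)\wt Y_{i_{k'}}(E(\eta))$, where $M(\eta)=I+\epsilon(\eta)$ and $\|\epsilon(\eta)\|_{\mathrm{op}}\to 0$ uniformly as $\|\eta\|_I + r\to 0$, uniformly in $x\in\Omega$.

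Then $\dot F(t)=\sum_{k'}[M(\gamma(t))(h'-h)]_{k'}\,\wt Y_{i_{k'}}(F(t))$. Setting $A:=[\wt Y_{i_1}(x),\ldots,\wt Y_{i_n}(x)]$ (invertible by $\frac{1}{2}$-maximality combined with $\nu(\Omega_0)>0$), I would integrate $\int_0^1\dot F(t)\,dt=0$ and apply $A^{-1}$ to obtain a vector identity of the form $\bar M(h'-h)+\mathrm{Err}=0$, with $\bar M:=\int_0^1 M(\gamma(t))\,dt$ and $\mathrm{Err}$ collecting the contributions from $A^{-1}[\wt Y_{i_{k'}}(F(t))-\wt Y_{i_{k'}}(x)]$. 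Choosing $C$ sufficiently large (in terms of $\nu(\Omega_0)$, $L_0$, $C_0$, and the moduli of continuity of $Y_1,\dots,Y_q$ on $\ol{\Omega_0}$) forces $\bar M$ close to $I$ in the anisotropic norm on $\R^n$ matched to $A$, and $\mathrm{Err}$ dominated by $\tfrac12|h'-h|$ in that same norm; this yields $h'=h$, contradicting $h\neq h'$.

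The hard part will be the anisotropic bookkeeping: the gains $\|h\|_I^{\ell_j-\ell_{i_k}}$ in \eqref{sogliola} and $\|h\|_I^{s+1-\ell_{i_k}}$ in \eqref{merluzzo}, combined with the factor $r^{\ell_j-\ell_{i_k}}$ extracted from Cramer's rule at $x$, have to be marshalled carefully so that $\epsilon(\eta)$ is uniformly small in the correct weighted norm, despite the fact that the individual entries of $A^{-1}$ scale like $r^{-\ell_{i_k}}$. The continuity contribution $\wt Y_{i_{k'}}(F(t))-\wt Y_{i_{k'}}(x)$ is tractable because $Y_j\in C^0_\Eucl$ is uniformly continuous on the compact set $\ol{\Omega_0}$ and $|F(t)-x|\to 0$ as $r\to 0$ uniformly in $h\in Q_I(C^{-1})$ and $x\in\Omega$.
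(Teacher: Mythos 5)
Your approach is genuinely different from the one in the paper, and I believe it has a gap at the crucial step. The paper's proof of Theorem~\ref{innere} (item \ref{finallyy} of the combined proof) does \emph{not} attempt a direct averaged-Jacobian argument on the fixed box $Q_I(C^{-1})$: it first establishes local injectivity on $\lambda_I$-dependent, \emph{non-uniformly small} sets $Q_I(C^{-1}r^{\ell(I)}|\lambda_I(\xi)|)$ (item \ref{jojojo}), proves that $E$ is an open local diffeomorphism (item \ref{orbuz}), and then runs a qualitative covering/topological argument following Nagel--Stein--Wainger and Morbidelli to bootstrap local to global injectivity. This is also why the authors explicitly remark that the constant in Theorem~\ref{innere} is \emph{not} quantitative in $C_0,L_0$.

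The gap in your direct argument lies in the estimate of $\mathrm{Err}$. Write $A(y)=[\wt Y_{i_1}(y),\dots,\wt Y_{i_n}(y)]=\hat A(y)D_r$ with $\hat A(y):=[Y_{i_1}(y),\dots,Y_{i_n}(y)]$ and $D_r:=\diag(r^{\ell_{i_1}},\dots,r^{\ell_{i_n}})$. Then
\begin{equation*}
A(x)^{-1}A(F(t)) = D_r^{-1}\,\hat A(x)^{-1}\hat A(F(t))\,D_r = I + D_r^{-1}\delta(t)D_r, \qquad \delta(t):=\hat A(x)^{-1}\big[\hat A(F(t))-\hat A(x)\big].
\end{equation*}
Uniform continuity gives $\|\delta(t)\|\lesssim \omega(Cr)/|\lambda_I(x)|$ with $\omega$ the modulus of continuity of the $Y_j$, but the conjugation produces entries $r^{\ell_{i_{k'}}-\ell_{i_k}}\delta_{kk'}(t)$; when $\ell_{i_{k'}}<\ell_{i_k}$ (in particular $\ell_{i_{k'}}=1$, $\ell_{i_k}=s$) you get a factor $r^{1-s}$. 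Under the hypothesis $Y_j\in C^0_\Eucl$ only, $\omega(Cr)$ can decay arbitrarily slowly, so $r^{1-s}\omega(Cr)$ does not tend to $0$ as $r\to 0$ (and even for Lipschitz $Y_j$ it is merely bounded, not small). Switching to an anisotropic weighted operator norm (say $|v|_w:=\max_k r^{\ell_{i_k}}|v_k|$, which does tame $D_r^{-1}\delta D_r$ since $\|D_r^{-1}\delta D_r\|_w=\|\delta\|_\infty$) merely shifts the problem onto the matrix $\chi$ from expansion~\eqref{bonfo}--\eqref{dichi}: in that weighted norm $\|\chi\|_w=\|D_r\chi D_r^{-1}\|_\infty$ acquires the reciprocal amplification factors $r^{\ell_{i_k}-\ell_{i_{k'}}}$. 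There is no single operator norm in which \emph{both} $\bar M-I$ and $\mathrm{Err}$ are simultaneously small, because the two error matrices have no compatible triangular structure. Note also that the Jacobian comparability $C^{-1}|\det dE(0)|\le|\det dE(h)|\le C|\det dE(0)|$ does not rescue the argument: two matrices with equal nonzero determinant can average to a singular matrix, so pointwise invertibility of $dE(\gamma(t))$ does not yield invertibility of $\int_0^1 dE(\gamma(t))\,dt$. This is precisely the obstruction that forces the paper into the covering/degree argument.
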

The constant $C$ in Theorem \ref{innere} does not depend quantitatively on $C_0$ and $L_0$, because  \ref{finallyy} below involves a qualitative covering argument. 
A more precise control on such constant can be obtained assuming more regularity (for instance if the vector fields belong to the class $\B_s$ of \cite{MontanariMorbidelli11c}).


\begin{proof}[Proof of Theorems \ref{teoremahormander} and \ref{innere}]
All arguments of the proofs are contained in the papers \cite{NagelSteinWainger,Morbidelli98,MM,MontanariMorbidelli11d} and \cite{MontanariMorbidelli11c}.  Let us recapitulate the
 skeleton of the proof with precise references to the mentioned papers.

\begin{enumerate}[noitemsep,label=(\roman*)]
      
 \item \label{uutt}
  Specializing
 \cite[Remark~3.3]{MontanariMorbidelli11d} 
to our setting, we may claim 
that if $(I,x,r)$ is $\eta$-maximal, then $(I,y,r)$ is $C^{-1}\eta$-maximal for all $y\in B_d(x,C^{-1}\eta r)$. 
\item \label{uutt2}  The proof of  
  Theorem~\ref{teoremahormander}, items 
  (a)
and (b) are contained in
 \cite[Theorem~3.11]{MontanariMorbidelli11d}.  
Note 
that the mentioned 
result holds even in a more general setting where the H\"ormander's rank condition is not assumed.

\item \label{jojojo} In view of \ref{uutt},  \ref{uutt2} and expansion \eqref{polacco} 
we can follow the proof of \cite[Lemma~5.14]{MM} (just letting $\sigma=0$). Thus, we may claim that if $\xi\in\Omega$ and  $\abs{\lambda_I(\xi)}\neq 0$, then  $E_{I,\xi,r}$ is one-to-one on $Q_I(C^{-1}r^{\ell(I)} \abs{\lambda_I(\xi)})$.

\item  \label{orbuz}  For all $\eta\in (0,1)$ there is $C_\eta>0$ such that given an $\eta$-maximal triple
 $(I,x,r)$, then  the map  $E_{I,x,r}$ satisfies for all $j\in\{1,\dots,n\}$ the expansion
\begin{equation}\label{bonfo}
  \frac{\p }{\p h_j} E(h)= \wt Y_{i_j} (E(h)) + \sum_{1\le k\le n} \chi_j^k(h)
\wt Y_{i_k}(E(h))
\quad\text{for all $h\in Q_I (C_\eta^{-1})$,}
\end{equation}
where $\chi\in C^0_\Eucl(Q_I(C_\eta^{-1}) , \R^{n\times n})$ satisfies
\begin{equation}\label{dichi}
 |\chi (h)|\le C_\eta\norm{h}_I\qquad \text{if  $\norm{h}_I\le C_\eta^{-1}$.}
\end{equation}
Therefore, for a suitable $\wt C_\eta$ possibly larger that $C_\eta$,  the map $E_{I,x,r}\big|_{Q_I(\wt C_\eta^{-1})}$ is a local $C^1$ diffeomorphism and in particular it is  open. This ensures that the 
topologies of the distances  $\r, \dcc$ and $d$ are all locally equivalent to the Euclidean one.  
Expansion~\eqref{bonfo}  with estimate~\eqref{dichi} has been proved in \cite[Theorem~3.1]{MontanariMorbidelli11c}.
As observed after the statement in \cite{MontanariMorbidelli11c}, such result holds in the broader class $\A_s$.
Note that in \cite{{MontanariMorbidelli11c}} we discuss the case $\eta=\frac 12$. The case with   $\eta<\frac 12$ can be treated with minor modifications.

\item \label{III}  To prove Theorem  \ref{teoremahormander}-(c), it suffices to follow the 
proof of \cite[Lemma~3.7]{MontanariMorbidelli11c}.  
This is explained in \cite[Remark~3.8]{MontanariMorbidelli11c}.

\item \label{finallyy}  Finally, keeping all previous items into account,  to prove the injectivity result Theorem \ref{innere}, it suffices to follow   \cite[pp.~132--133]{NagelSteinWainger} or  \cite[Lemma~3.6]{Morbidelli98}.
In the proof of the latter
lemma,  note that in third line of  \cite[Eq.(30)]{Morbidelli98},   which reads
\[\abs{\lambda_{I_{0,k}}(x)}\d_{0,k}^{d(I_{0,k})}>\frac 12\max_J\abs{\lambda_{J}(x)}\delta_{0,k}^{d(J)} 
\quad\text{for all $x\in U_k$,}\] 
by \ref{uutt} we may choose $U_k = B_d(x_k, C^{-1}\delta_{0,k})$, which is open by \ref{orbuz}; moreover,    by \ref{jojojo} we may assume that $E_{I,x,\delta_{0,k}}\big|_{Q_I(\delta_{0,k})}$ is one-to-one for each $x\in U_k$. 
The remaining part of the proof in \cite{Morbidelli98} can be applied verbatim to our setting.

\end{enumerate}
\end{proof}

\begin{remark}
Theorem \ref{innere} implies the \emph{doubling property} for vector fields satisfying the hypotheses of Theorem \ref{teoremahormander}. Let $ \Omega\subset\R^n$ be a bounded 
 open set. Then there are $C$ and $r_0>0$ so that  
\begin{equation*}
      \abs{B_\textup{cc}(x,2r)}\le C\abs{B_\textup{cc}(x,r)}
\quad\text{for all $x\in \Omega$ $r< r_0$.}
\end{equation*}
Moreover, following \cite{LM}, one gets  for all $f\in C^1(B_\textup{cc}(x,Cr))$, the \emph{Poincar\'e inequality}
\begin{equation*}
      \int_{B_\textup{cc}(x,r)}\abs{f(y) 
- f_B}dy\le C r\int_{B_\textup{cc}(x, Cr)}
\sum_j \abs{X_jf(y)}dy\quad\text{for all $x\in \Omega$ $r< r_0$.}
\end{equation*}
Finally, as in \cite[Proposition~6.2]{MM},  given
$\Omega'\subset\subset\Omega$,  and $\e\in\left]0,1/s\right[$, there is $
r_0$ and $C>0$ 
such that, for any $f\in C^1(\Omega)$,  \begin{equation}\label{orol}
  \int\limits_{\substack{{\Omega'\times\Omega'}\\
   {d(x,y)\le  r_0}}}\frac{|f(x)-f(y)|^2}{|x-y|^{n+2\e }}dxdy\le C\int_\Omega \sum_j|X_jf(y)|^2 dy.
 \end{equation}
\end{remark}

\appendix
\section{Appendix}
\label{bio}
\paragraph{Tychonoff regularization for the Moore--Penrose
pseudoinverse.}
Here we discuss an approximation  formula  for the Moore--Penrose inverse  of a
matrix which has been used in Proposition~\ref{arcano}. This result  is proposed as an exercise in some matrix-analysis
textbooks  (see \cite[Problem 5.5.2]{Golub}).  We include here  a short
discussion for completeness.

Let $a_1, \dots, a_q\in \R^n$ and let $A= [a_1, \dots, a_q]\in \R^{n\times q}$.
Take $b\in \Span\{a_1, \dots, a_q\}$ and look at the system $Ax=b$ where $x\in
\R^q$. We do not assume that the vectors  $a_j$ are independent. Let
$\xls $ be
the solution of minimal norm. We claim that
\begin{equation}\label{elleesse}
\xls = \lim_{\lambda\to 0}(A^T A + \lambda^2 I_q)^{-1}A^T b.
\end{equation}
In other words, the family if matrices $(A^T A + \lambda^2 I_q)^{-1}A^T $
gives an approximation of the Moore--Penrose inverse $A^\dag$, as $\lambda\to
0$.
 Note that,
if $a_1, \dots, a_q$ are  independent, then it is well known that $A^\dag =
(A^TA)^{-1} A^T$. If they are dependent, then $A^TA$
is singular, but  still we have $\lim_{\lambda\to 0}
(A^T A + \lambda^2 I_q)^{-1}A^T= A^\dag$.

To show \eqref{elleesse},  write $A= U\Sigma V^T$ as a
 singular value decomposition, i.e.
$U\in O(n)$,   $V\in O(q)$, while $\Sigma =\diag(\s_1, \dots, \s_r, 0, \dots,
0)\in \R^{n\times q}
$, where $\s_1\ge \cdots \ge \s_r >0$ are the singular values of $A$ and $r\le
\min\{q,n\}$ is
its rank.  Note that $U^T[a_1,\dots, a_q]= \Sigma V^T$. Therefore,
 $U^T a_j\in \R^r\times \{0_{n-r}\}$ and $U^T b\in \R^r\times
\{0_{n-r}\} $, too.

By definition, the vector  $x\in \R^q$ is a (not unique) least-square  solution
of the system $Ax=b$
if and only if it solves $ A^TA x = A^T b $, which is equivalent to
$\Sigma^T\Sigma V^T x =
\Sigma^T U^T b$, or, letting $V^T x= :\xi$ and $  U^Tb =:
\beta\in\R^n$, to the system
\begin{equation}\label{dibeta}
\Sigma^T\Sigma \xi
= \Sigma^T \beta.
\end{equation}
Since $\Sigma^T\Sigma = \diag(\s_1^2, \dots,\s_r^2 , 0, \dots, 0)\in
\R^{q\times q}$ and since the system \eqref{dibeta} has solutions by
assumptions on the data $b$, it must be $\Sigma^T\beta =(\s_1 \b_1,\dots,
\s_r\b_r,0,\dots)^T\in \R^{q}$ and  the solutions of \eqref{dibeta}
are $\xi= (\b_1/\s_1,\dots, \b_r/\s_r,\xi_{r+1}, \dots, \xi_q)^T$, with
$\xi_{r+1}, \dots, \xi_q$ free parameters.
Clearly, the minimal-norm one is
$\xi_{\textup{LS}}= (\b_1/\s_1, \cdots, \b_r/ \s_r, 0,
\dots )^T\in \R^q$.

Define now  the vector
$x_\lambda:=(A^TA+\lambda^2 I_q)^{-1}A^T b = V(\Sigma^T \Sigma+ \lambda^2
I_q)^{-1}\Sigma^T U^T b$. Since  $\Sigma^T U^T b=\Sigma^T\beta= (\s_1\b_1,
\dots, \s_r\b_r, 0, \dots)^T$, we have
\[V^T x_\lambda=:\xi_\lambda = (\s_1\b_1/(\s_1^2 + \lambda^2),
\dots,\s_r\b_r/(\s_r^2 + \lambda^2), 0, \dots)^T\in
\R^q.\]
Thus, as $\lambda\to 0$,
\begin{align*}
\abs{x_{\textup{LS}} - x_\lambda}
& =\abs{\xi_{\textup{LS}} - \xi_\lambda } =
\Bigl|\Big(\frac{\lambda^2\b_1}{\s_1(\s_1^2+\lambda^2)}, \dots,
\frac{\lambda^2\b_r}{\s_r(\s_r^2+\lambda^2)}\Big)\Bigr|\longrightarrow 0.
\end{align*}
This concludes the  proof of \eqref{elleesse}.

\footnotesize

 \phantomsection
\addcontentsline{toc}{section}{References}  
  
\def\cprime{$'$}
\providecommand{\bysame}{\leavevmode\hbox to3em{\hrulefill}\thinspace}
\providecommand{\MR}{\relax\ifhmode\unskip\space\fi MR }
\providecommand{\MRhref}[2]{%
  \href{http://www.ams.org/mathscinet-getitem?mr=#1}{#2}
}
\providecommand{\href}[2]{#2}


\begin{thebibliography}{ASCV06}

\bibitem[AL50]{Amitsur50}
A.~S. Amitsur and J.~Levitzki, \emph{Minimal identities for algebras}, Proc.
  Amer. Math. Soc. \textbf{1} (1950), 449--463. \MR{0036751 (12,155d)}

\bibitem[ASCV06]{AmbrosioSerraCassanoVittone06}
Luigi Ambrosio, Francesco Serra~Cassano, and Davide Vittone, \emph{Intrinsic
  regular hypersurfaces in {H}eisenberg groups}, J. Geom. Anal. \textbf{16}
  (2006), no.~2, 187--232. \MR{2223801 (2007g:49072)}

\bibitem[Aub01]{Aubin01}
Thierry Aubin, \emph{A course in differential geometry}, Graduate Studies in
  Mathematics, vol.~27, American Mathematical Society, Providence, RI, 2001.
  \MR{1799532 (2001m:53001)}

\bibitem[BBP12a]{BramantiBrandoliniPedroni08}
M.~{Bramanti}, L.~{Brandolini}, and M.~{Pedroni}, \emph{{Basic properties of
  nonsmooth Hormander's vector fields and Poincare's inequality}}, ArXiv
  e-prints, \url{http://arxiv.org/abs/0809.2872}. Forum Math. (to appear)
  (2012).

\bibitem[BBP12b]{BramantiBrandoliniPedroni2}
\bysame, \emph{{On the lifting and approximation theorem for nonsmooth vector
  fields}}, ArXiv e-prints; Indiana Univ. Math. J. (to appear) (2012).

\bibitem[Che99]{Cheeger}
J.~Cheeger, \emph{Differentiability of {L}ipschitz functions on metric measure
  spaces}, Geom. Funct. Anal. \textbf{9} (1999), no.~3, 428--517. \MR{1708448
  (2000g:53043)}

\bibitem[CLM02]{CittiLanconelliMontanari02}
G.~Citti, E.~Lanconelli, and A.~Montanari, \emph{Smoothness of
  {L}ipschitz-continuous graphs with nonvanishing {L}evi curvature}, Acta Math.
  \textbf{188} (2002), no.~1, 87--128. \MR{1947459 (2003m:35035)}

\bibitem[FL83a]{FL}
B.~Franchi and E.~Lanconelli, \emph{Une m\'etrique associ\'ee \`a une classe
  d'op\'erateurs elliptiques d\'eg\'en\'er\'es}, Rend. Sem. Mat. Univ. Politec.
  Torino (1983), no.~Special Issue, 105--114 (1984), Conference on linear
  partial and pseudodifferential operators (Torino, 1982).

\bibitem[FL83b]{FranchiLanconelli83}
Bruno Franchi and Ermanno Lanconelli, \emph{H\"older regularity theorem for a
  class of linear nonuniformly elliptic operators with measurable
  coefficients}, Ann. Scuola Norm. Sup. Pisa Cl. Sci. (4) \textbf{10} (1983),
  no.~4, 523--541. \MR{753153 (85k:35094)}

\bibitem[FP83]{FeffermanPhong81}
C.~Fefferman and D.~H. Phong, \emph{Subelliptic eigenvalue problems},
  Conference on harmonic analysis in honor of {A}ntoni {Z}ygmund, {V}ol. {I},
  {II} ({C}hicago, {I}ll., 1981), Wadsworth Math. Ser., Wadsworth, Belmont, CA,
  1983, pp.~590--606. \MR{730094 (86c:35112)}

\bibitem[GN96]{GarofaloNhieu96}
Nicola Garofalo and Duy-Minh Nhieu, \emph{Isoperimetric and {S}obolev
  inequalities for {C}arnot-{C}arath\'eodory spaces and the existence of
  minimal surfaces}, Comm. Pure Appl. Math. \textbf{49} (1996), no.~10,
  1081--1144. \MR{1404326 (97i:58032)}

\bibitem[Gre10]{Greshnov}
A.~V. Greshnov, \emph{On a class of {L}ipschitz vector fields in {$\mathbb
  R^3$}}, Sibirsk. Mat. Zh. \textbf{51} (2010), no.~3, 517--527. \MR{2683094
  (2011g:53059)}

\bibitem[GVL89]{Golub}
Gene~H. Golub and Charles~F. Van~Loan, \emph{Matrix computations}, second ed.,
  Johns Hopkins Series in the Mathematical Sciences, vol.~3, Johns Hopkins
  University Press, Baltimore, MD, 1989. \MR{1002570 (90d:65055)}

\bibitem[Har02]{Hartman}
Philip Hartman, \emph{Ordinary differential equations}, Classics in Applied
  Mathematics, vol.~38, Society for Industrial and Applied Mathematics (SIAM),
  Philadelphia, PA, 2002, Corrected reprint of the second (1982) edition
  [Birkh{\"a}user, Boston, MA; MR0658490 (83e:34002)], With a foreword by Peter
  Bates. \MR{1929104 (2003h:34001)}

\bibitem[HK00]{HajlaszKoskela}
Piotr Haj{\l}asz and Pekka Koskela, \emph{Sobolev met {P}oincar\'e}, Mem. Amer.
  Math. Soc. \textbf{145} (2000), no.~688, x+101. \MR{1683160 (2000j:46063)}

\bibitem[Jer86]{Jerison}
David Jerison, \emph{The {P}oincar\'e inequality for vector fields satisfying
  {H}\"ormander's condition}, Duke Math. J. \textbf{53} (1986), no.~2,
  503--523. \MR{850547 (87i:35027)}

\bibitem[JSC87]{JerisonSanchez}
David Jerison and Antonio S{\'a}nchez-Calle, \emph{Subelliptic, second order
  differential operators}, Complex analysis, {III} ({C}ollege {P}ark, {M}d.,
  1985--86), Lecture Notes in Math., vol. 1277, Springer, Berlin, 1987,
  pp.~46--77. \MR{922334 (89b:35021)}

\bibitem[KN96]{KobayashiNomizu}
Shoshichi Kobayashi and Katsumi Nomizu, \emph{Foundations of differential
  geometry. {V}ol. {I}}, Wiley Classics Library, John Wiley \& Sons Inc., New
  York, 1996, Reprint of the 1963 original, A Wiley-Interscience Publication.
  \MR{1393940 (97c:53001a)}

\bibitem[KV09]{KarmanovaVodopyanov}
Maria Karmanova and Sergey Vodop'yanov, \emph{Geometry of
  {C}arnot-{C}arath\'eodory spaces, differentiability, coarea and area
  formulas}, Analysis and mathematical physics, Trends Math., Birkh\"auser,
  Basel, 2009, pp.~233--335. \MR{2724617}

\bibitem[Lev50]{Levitzki50}
J.~Levitzki, \emph{A theorem on polynomial identities}, Proc. Amer. Math. Soc.
  \textbf{1} (1950), 334--341. \MR{0035758 (12,6c)}

\bibitem[LM00]{LM}
Ermanno Lanconelli and Daniele Morbidelli, \emph{On the {P}oincar\'e inequality
  for vector fields}, Ark. Mat. \textbf{38} (2000), no.~2, 327--342.

\bibitem[Man10]{Manfredini}
Maria Manfredini, \emph{A note on the {P}oincar\'e inequality for {L}ipschitz
  vector fields of step two}, Proc. Amer. Math. Soc. \textbf{138} (2010),
  no.~2, 567--575. \MR{2557174 (2010j:35011)}

\bibitem[MM11]{MontanariMorbidelli11c}
A.~{Montanari} and D.~{Morbidelli}, \emph{{Step-s involutive families of vector
  fields, their orbits and the {P}oincar\'e inequality}}, ArXiv e-prints
  (2011).

\bibitem[MM12a]{MontanariMorbidelli11d}
\bysame, \emph{{Almost exponential maps and integrability results for a class
  of horizontally regular vector fields}}, ArXiv e-prints (2012).

\bibitem[MM12b]{MM}
\bysame, \emph{{Nonsmooth {H}\"ormander vector fields and their control
  balls}}, Trans. Amer. Math. Soc (to appear). (2012).

\bibitem[Mor00]{Morbidelli98}
Daniele Morbidelli, \emph{Fractional {S}obolev norms and structure of
  {C}arnot-{C}arath\'eodory balls for {H}\"ormander vector fields}, Studia
  Math. \textbf{139} (2000), no.~3, 213--244. \MR{1762582 (2002a:46039)}

\bibitem[NSW85]{NagelSteinWainger}
Alexander Nagel, Elias~M. Stein, and Stephen Wainger, \emph{Balls and metrics
  defined by vector fields. {I}. {B}asic properties}, Acta Math. \textbf{155}
  (1985), no.~1-2, 103--147. \MR{793239 (86k:46049)}

\bibitem[Ote91]{Oteo}
J.~A. Oteo, \emph{The {B}aker-{C}ampbell-{H}ausdorff formula and nested
  commutator identities}, J. Math. Phys. \textbf{32} (1991), no.~2, 419--424.
  \MR{1088363 (92b:17055)}

\bibitem[RS07]{RamSus}
Franco Rampazzo and H{\'e}ctor~J. Sussmann, \emph{Commutators of flow maps of
  nonsmooth vector fields}, J. Differential Equations \textbf{232} (2007),
  134--175.

\bibitem[SC92]{SaloffCoste92}
L.~Saloff-Coste, \emph{A note on {P}oincar\'e, {S}obolev, and {H}arnack
  inequalities}, Internat. Math. Res. Notices (1992), no.~2, 27--38.
  \MR{1150597 (93d:58158)}

\bibitem[SW06]{SawyerWheeden}
Eric~T. Sawyer and Richard~L. Wheeden, \emph{H\"older continuity of weak
  solutions to subelliptic equations with rough coefficients}, Mem. Amer. Math.
  Soc. \textbf{180} (2006), no.~847, x+157. \MR{2204824 (2007f:35037)}

\end{thebibliography}

\normalsize
\bigskip \noindent\sc \small  Annamaria Montanari, Daniele Morbidelli
\\ Dipartimento di Matematica,
Universit\`{a} di Bologna  (Italy)
\\Email: \tt   annamaria.montanari@unibo.it,
daniele.morbidelli@unibo.it

\end{document}